\newif\ifPreprint \Preprinttrue
\newif\ifSubmission \Submissionfalse
\patchcmd{\@settitle}{\uppercasenonmath\@title}{\scshape\large}{}{}
\patchcmd{\@setauthors}{\MakeUppercase}{\scshape\normalsize}{}{}
\newcommand{\abbr}[1][abbrev]{#1.\xspace}
\newcommand{\eg}{\abbr[e.g]}
\newcommand{\ie}{\abbr[i.e]}
\newcommand{\st}{\text{s.t.}}
\newcommand{\T}{\mathsf{^T}}
\newcommand{\define}{\mathrel{{\mathop:}{=}}}
\newcommand{\enifed}{\mathrel{{=}{\mathop:}}}
\newcommand{\field}{\mathbb}
\newcommand{\naturals}{\field{N}}
\newcommand{\reals}{\field{R}}
\newcommand{\integers}{\field{Z}}
\newcommand{\N}{\naturals}
\newcommand{\R}{\reals}
\newcommand{\Z}{\integers}
\newcommand{\fcdot}{\,\cdot\,}
\newcommand{\fcarg}[1]{\def\fc@rg{#1}\ifx\fc@rg\empty\fcdot\else\fc@rg\fi}
\newcommand{\abs}[1]{\lvert\fcarg{#1}\rvert}
\newcommand{\Abs}[1]{\left\lvert#1\right\rvert}
\newcommand{\norm}[2][]{\lVert\fcarg{#2}\rVert\ifx#1\empty\else_{#1}\fi}
\newcommand{\Norm}[2][]{\left\lVert#2\right\rVert\ifx#1\empty\else_{#1}\fi}
\DeclareMathOperator*{\argmax}{arg\,max}
\newcommand{\Set}[1]{\left\{#1\right\}}
\newcommand{\Defset}[3][\defsep]{\Set{#2#1#3}}
\newcommand{\eps}{\varepsilon}
\newtheorem{theorem}{Theorem}[section]
\newtheorem{lemma}[theorem]{Lemma}
\newtheorem{remark}[theorem]{Remark}
\crefname{assumption}{assumption}{assumptions}
\newcommand{\interior}[1]{{\kern0pt#1}^{\mathrm{o}}}
\newcommand{\vect}[1]{\mathbf{#1}} 
\newcommand{\domain}{\mathcal{D}}
\newcommand{\vx}{\vect{x}}
\newcommand{\va}{\vect{a}}
\newcommand{\vb}{\vect{b}}
\newcommand{\vv}{\vect{v}}
\newcommand{\vt}{\vect{t}}
\newcommand{\vd}{\vect{d}}
\newcommand{\sol}{\vx^*}
\newcommand{\software}[1]{\texttt{#1}}
\DeclareRobustCommand\xxx{%
	\xxx\thinspace}
\begin{document}

\title[Parabolic Approximation \& Relaxation for MINLP]
{
Parabolic Approximation \& Relaxation for MINLP
}

\author[A. Gö$\beta$, R. Burlacu, A. Martin]%
{Adrian Gö$\beta^\ast$\orcidlink{https://orcid.org/0009-0002-7144-8657}, Robert Burlacu\orcidlink{https://orcid.org/0000-0003-0578-6260}, Alexander Martin\orcidlink{https://orcid.org/0000-0001-7602-3653}
\\ \\ \today}

\address{$^\ast$Corresponding author}
\address[A. Gö$\beta$, R. Burlacu, A. Martin]{
		University of Technology Nuremberg (UTN), 
		\newline
		Analytics \& Optimization Lab,
		Dr.-Luise-Herzberg-Str.~4,
		90461~Nuremberg,
		Germany}
\email[A. Gö$\beta$, R. Burlacu, A. Martin]{$\{$adrian.goess, robert.burlacu, alexander.martin$\}$@utn.de}

\begin{abstract}
  We propose an approach based on quadratic approximations for solving general Mixed-Integer Nonlinear Programming (MINLP) problems.
Specifically, our approach entails the global approximation of the epigraphs of constraint functions by means of paraboloids, which are  polynomials of degree two with univariate quadratic terms, and relies on a Lipschitz property only.
These approximations are then integrated into the original problem.
To this end, we introduce a novel approach to compute globally valid epigraph approximations by paraboloids via a Mixed-Integer Linear Programming (MIP) model.
We emphasize the possibility of performing such approximations a-priori and providing them in form of a lookup table, and then present several ways of leveraging the approximations to tackle the original problem.
We provide the necessary theoretical background and conduct computational experiments on instances of the MINLPLib.
As a result, this approach significantly accelerates the
solution process of MINLP problems, particularly those involving many trigonometric or few exponential functions.
In general, we highlight that the proposed technique is able to exploit advances in Mixed-Integer Quadratically-Constrained Programming (MIQCP) to solve MINLP problems.

\end{abstract}

\keywords{mixed-integer nonlinear programming,
mixed-integer linear programming,
quadratic approximation,
global optimization\hspace{-1mm}
}
\subjclass[2010]{90C11, % Mixed integer programming
90C20, % Quadratic Programming
90C26, % Nonconvex programming, global optimization
90C30 % Nonlinear programming
}

\maketitle\
\section*{Acknowledgments}
\label{sec:acknowledgements}

We thank the German Research Foundation (Deutsche Forschungsgemeinschaft, DFG) for their support within project A05 in the
\enquote{Sonderforschungsbereich/Transregio 154 Mathematical Modelling, Simulation and Optimization using the Example of Gas Networks} with Project-ID 239904186.
Further, we gratefully acknowledge the scientific support and HPC resources provided by the Erlangen National High Performance Computing Center (NHR@FAU) of the Friedrich-Alexander-Universität Erlangen-Nürnberg (FAU). The hardware is funded by the DFG.

\newpage
\section{Introduction}
\label{sec:introduction}

\noindent
We consider the problem
	\begin{subequations}
	\label{prob:minlp}
	\begin{align}
		\min \ & c(\vx) \\
		\st \ & f_j(\vx)  \leq y_j, \qquad j \in J, \\
		& \vx \in \Omega, \\
		& \vect{y} \in [\underline{\vect{y}}, \overline{\vect{y}}],
	\end{align}
\end{subequations}
where $\Omega = [\underline{\vx}, \overline{\vx}] \cap (\Z^p \times \R^{n-p}) \neq \emptyset$ for $p, \, n \in \N$ with $0 \leq p \leq n$ and $\abs{J} < \infty$.
We assume the functions $c, f_j : \R^n \to \R$ to be continuous on the domain $[\underline{\vx}, \overline{\vx}]$, $j \in J$.
Note that we can assume linearity of the objective $c$ by introducing a separate variable $z$, which is then minimized subject to $c(\vx) \leq z$ and the remaining constraints. 
We assume the existence of integer variables throughout the paper, \ie, $p > 0$, unless otherwise stated.
In general, we consider at least one function to be nonlinear and therefore we refer to~\labelcref{prob:minlp} as a \ac{minlp} problem or, simply, a~\ac{minlpm}. 
The right-hand sides $y_j$ are inserted for ease of presentation and do not pose a major limitation, as discussed later.

The solution methods for a \ac{minlpm} can vary enormously depending on the properties of the functions involved. 
See~\cite{Horst2013} for an overview of global solution strategies (mainly for continuous variables) and~\cite{Belotti2013} for a survey of the most common components of practical approaches for \ac{minlp} problems.

In the case that all functions involved are convex, the feasible set of the continuous relaxation of~\labelcref{prob:minlp} is also convex.
Consequently, this is typically referred to as a \textit{convex} \ac{minlp}. 
Such problems are usually tackled using cutting methods, such as outer approximation~\cite{Duran1986} or extended cutting-plane methods~\cite{Westerlund1995}, and decomposition schemes, such as generalized Benders decomposition~\cite{Geoffrion1972}.

However, if at least one of the functions is not convex, the situation changes, and~\labelcref{prob:minlp} is then referred to as a \textit{non-convex} \ac{minlp}.
Typically, non-convex \ac{minlp} problems are solved by a combination of convex relaxation of the non-convex nonlinearities and a refinement mechanism.
Effective convexification techniques are proposed by~\cite{Tawarmalani2013}, among others.
Since these may not provide a sufficiently tight representation of the original domain, the feasible region can be split to generate new subproblems with tighter convex relaxations.
This divide-and-conquer approach is performed in spatial branch-and-bound by reformulation (see, \eg,~\cite{Ryoo1996}) or in $\alpha$-branch-and-bound by $\alpha$-convexification~\cite{Androulakis1995}.
In the last decade, a viable alternative has received new attention, namely the approximation of the non-convex functions by piecewise linear functions and the subsequent reformulation as a \ac{mip} problem (alternatively, \ac{mipm}), particularly for real-world applications with considerable combinatorics~\cite{Burlacu2020,Geissler2012}.
For a comprehensive overview of the theoretical properties of the various \ac{mip}-based models for piecewise linear functions, refer to~\cite{Vielma2015}.

A natural generalization of the piecewise linear approach is the piecewise approximation by polynomials of a certain degree. 
Such an approximation is performed in an optimization setting using quadratic functions~\cite{Buchheim2013}.
In certain algebraic subfields, sets of more general polynomials of this type are usually referred to as \textit{splines}, and a distinction is made between approximations with fixed and variable knots, \ie, discretization points of an interval.
There is a vast amount of literature starting from the 1960s on splines. 
\citeauthor{Nurnberger1989}~\cite{Nurnberger1989} gives a comprehensive review of the first approximately 30 years.
For a more recent introduction to the topic, we refer the reader to~\cite{Schumaker2007}.

From an optimization perspective, the piecewise approximation by higher-order polynomials instead of linear functions does not offer a clear advantage, as this implies the introduction of both non-convex constraints and integer variables.
However, if we can perform a one-sided spline approximation, i.e., an underestimation of a function on its domain, this serves as a globally valid relaxation that does not require piecewise modeling.
In other terms, the underestimation is an approximation of the epigraph of the function.
In~\cite{Bojanic1966}, the authors prove existence and uniqueness for a best one-sided approximation with maximum degree. 
A computational procedure to compute such an approximation with a single polynomial is presented in~\cite{Lewis1970}.

Surprisingly, we did not find any literature that treats the one-sided approximation by a spline with variable knots, as we require it for optimization.
The existing research is either limited to spline approximations that are not one-sided (see~\cite{Ahlberg2016} or again~\cite{Schumaker2007}), which does not yield a globally valid relaxation of the \ac{minlpm}, or it focuses on best approximation by a single polynomial (see~\cite{Deikalova2020}).

In \Cref{sec:approx}, we thus present an \ac{mip} model for the one-sided approximation by univariate quadratic functions and prove its correctness.
In~\Cref{sec:gbm}, we formally introduce the resulting method for approximating non-convex constraint functions by a small number of such quadratic functions 
and their integration into an \ac{minlpm}.
This is complemented by computational showcases in~\Cref{sec:computations}, and the article concludes with remarks and further research directions in~\Cref{sec:conclusion}.

\subsection{Contribution}

We propose a framework to solve optimization problems with general nonlinear constraints by employing multiple quadratic constraints, which fulfill a predefined approximation accuracy. 
To the best of our knowledge, there is no approach for solving \ac{minlp} problems that globally approximates non-convex functions on their respective domains to a given accuracy without introducing additional variables, \ie, increasing the dimension of the problem, or branching on continuous variables.
In essence, this framework has the potential to turn an \ac{minlp} problem into an \ac{miqcp} one 
such that a solution to the latter serves as an $\eps$-approximate solution for the original problem.
Thus, this approach enables to leverage recent advancements in \ac{miqcp} (see, for instance,~\cite{Beach2022,Beach2024,Wiese2021}) to solve \ac{minlp} problems.

To this end, we present a novel approach to compute a globally valid one-sided approximation for nonlinear functions by means of polynomials of degree two with univariate quadratic terms only.
This includes an \ac{mip} model, which basically relies on a Lipschitz property only, and several strategies to determine the number of approximating functions.
Based on the results, we introduce techniques to incorporate such parabolic approximations in \ac{minlp} problems to provide dual bounds or even accelerate the solving of the original problem.
The theoretical explanations are complemented with computational experiments on the MINLPLIb~\cite{MINLPLib}, demonstrating limitations and potentials by this framework.

Lastly, we motivate the usage of a lookup table for approximations on generic domains when solving \ac{minlp} problems.
This is possible through the properties of global validity and one-sidedness.
Akin to the realm of Machine Learning, this accepts extensive a-priori computations and then compensates for them by repetitive use of the results.

\subsection{Notation}
A vector $\vx = (x_1, \dots,x_n)\T \in \R^n$ is denoted with bold and upright letters, whereas components $x_i$ are italic and not bold. 
A comparison between two vectors $\vx, \vect{z} \in \R^n$, \eg, $\vx \leq \vect{z}$, is always interpreted component-wise.
In the theoretical section, we make use of the 1-norm of a vector $\vx$ which is defined as $\norm[1]{\vx} = \sum_{i = 1}^n \abs{x_i}$.
For a domain $\domain \subseteq \R^n$, we define the set of continuous functions on $\domain$ as $\mathcal{C}^0(\domain)$. 
Further, $\mathcal{P}(\domain)$ denotes the power set of $\domain$.
Lastly, for a positive integer $m \in \N$, we state the set of indexes up to $m$ as $[m] \coloneqq \{1, \dots, m\}$. 

Note that we present our theoretical part for constraint functions on a multi-dimensional domain. 
We aim for their approximation by quadratic functions without bivariate terms and refer to such as \textit{paraboloids}.
Hence, the global one-sided approximation by paraboloids is called a \textit{parabolic approximation}.
Later on, when we substitute constraint functions with mentioned approximations, we name such a \textit{parabolic relaxation}, emphasizing the character of this procedure.

\section{Parabolic Approximation -- MIP Approach}
\label{sec:approx}

The term \emph{one-sided} implies an approximation of the function's (hypo-/)epigraph, coining the expression of an approximation \emph{from below (above)}.
In this context, a set of paraboloids serves as a \emph{global} approximation if every paraboloid is a underestimator(/overestimator) of the approximated function on its entire domain. 
In the following, we introduce an \ac{mip} model to compute such a global one-sided approximation given a fixed number $K \in \N$ of paraboloids and a Lipschitz function~$f$ together with its Lipschitz constant $L$.
We also include the usage of a measure to compute the Lebesgue integral of $f$ on a given domain. 
This is not necessary for the functionality of the model, but decreases its size, as mentioned in the respective paragraph.

Formally, let $f: \domain \mapsto \R$ be a Lipschitz continuous function with respect to $\norm[1]{\fcdot}$ and some $L > 0$,
where $\domain = [\va, \vb]$ a non-empty, full dimensional box defined by the vectors $\va, \, \vb \in \R^n$. 
That is, for all $\vx, \vect{y} \in \domain$, it holds true that
\begin{equation*}
	\abs{f(\vx) - f(\vect{y})} \leq L \norm[1]{\vx - \vect{y}}.
\end{equation*}
Mentioning Lipschitz continuity in the remainder of this article, we will always refer to $\norm[1]{\fcdot}$ if not stated otherwise.
In addition, we assume to know a function $\mu_f: \mathcal{P}(\domain) \mapsto \reals_+$, measuring the Lebesgue integral of $f$ on any connected sub-domain of $\domain$.
In the one-dimensional case, such a function is typically the anti-derivative of~$f$. 

Note that, $f$ represents a constraint function $f_j$ from~\labelcref{prob:minlp}.
In this section, we exclusively treat its approximation from below, but remark that the approximation from above can be treated analogously by considering $-f$ instead.

For the approximation, we assume some guarantee $\eps > 0$ to be given and aim to determine paraboloids $p^l(\vx)$ for $l \in [K]$ such that
\begin{equation}
	\label{eq:condition-1-ndim}
	\max_{l \in [K]} p^l(\vx) \geq f(\vx) - \eps, \tag{C1}
\end{equation}
and
\begin{equation}
	\label{eq:condition-2-ndim}
	\max_{l \in [K]} p^l (\vx) \leq f(\vx), \tag{C2}
\end{equation}
for all $\vx \in \domain$.
Condition~\labelcref{eq:condition-1-ndim} ensures that $(p^l)_l$ serve in their entity as an approximation of the desired guarantee, 
whereas~\labelcref{eq:condition-2-ndim} secures the one-sided property.
In fact, one could reformulate it to $p^l(\vx) \leq f(\vx)$ for all $l \in [K]$.

\subsection{MIP Model}
In the following model, we interpret the coefficients of the paraboloids as variables.
If solved with an objective value of zero, the corresponding solution then determines paraboloids that satisfy conditions~\eqref{eq:condition-1-ndim} and \eqref{eq:condition-2-ndim}.

Before stating the model explicitly, we need to introduce the parameters ${\delta \in (0, \varepsilon)}$ and $\nu \in (0, \delta/\varepsilon)$.
Informally, their choices influence the centering of the parabolic approximation inside the $\eps$-tube between $f$ and $f - \eps$. 
This becomes clearer throughout this section.
In addition, a constant $C \geq L$ needs to be chosen which bounds the maximal absolute slope of each paraboloid and thus controls their ``spiking''.

With these parameters at hand, we define the widths $\Delta t_i$ and $\Delta d_i$ for $i \in [n]$ such that 
\begin{equation}
	\label{eq:choice-delta-ti}
	\sum_{i = 1}^n \Delta t_i \leq \frac{n+1}{n} \frac{\varepsilon - \delta}{3L},
\end{equation}
and 
\begin{equation}
	\label{eq:choice-delta-di}
	\Delta d_i \leq \frac{2\nu\eps}{(\sqrt{3} - 1)n (C + L)}.
\end{equation}
Because the right-hand side of the above inequalities is strictly positive, such a choice is always possible. 
Without loss of generality, we can assume $\Delta t_i$ and $\Delta d_i$ to be chosen such that $(b_i - a_i)/\Delta t_i \in \naturals$ and $(b_i - a_i)/\Delta d_i \in \N$ for all $i \in [n]$. 
This allows to define two grids on the domain $\domain$ as
\begin{equation*}
	\mathcal{G}_\eps \coloneqq \bigtimes_{i = 1}^n \Defset[\Bigm|]{a_i + k \Delta t_i}{k = 0,1,\dots, \frac{b_i - a_i}{\Delta t_i}},
\end{equation*}
and
\begin{equation*}
	\mathcal{G} \coloneqq \bigtimes_{i = 1}^n \Defset[\Bigm|]{a_i + k\Delta d_i}{k=0,1,\dots,\frac{b_i - a_i}{\Delta d_i}}.
\end{equation*}
Constraints defined on these grids enforce the conditions~\eqref{eq:condition-1-ndim} and \eqref{eq:condition-2-ndim}, respectively.
If we collect the individual grid widths as the vectors ${\Delta \vt = (\Delta t_1, \dots, \Delta t_n)\T}$ and $\Delta \vd = (\Delta d_1,\dots,\Delta d_n)\T$, we are able to rewrite the domain $\domain$ as 
\begin{equation}
	\label{eq:domain-partition}
	\domain = \bigcup_{\vt \in \mathcal{G}_\eps \cap [\va, \vb)} [\vt, \vt + \Delta \vt] = 
	\bigcup_{\vd \in \mathcal{G} \cap [\va, \vb)} [\vd, \vd + \Delta\vd].
\end{equation}
Note that the arguments of each union intersect at most on their boundaries. 
For clear notation in the model, we abbreviate $\mathcal{B}(\vd) = [\vd, \vd + \Delta \vd]$.
Further, we denote all neighboring points to $\vt$ in $\mathcal{G}_\eps$, \ie, all points which differ in each coordinate by exactly $\Delta t_i$, as 
\begin{equation*} 
	\mathcal{N}(\vt) = \Defset[\Bigm|]{\vt +  \sum_{i = 1}^n u_i \Delta t_i \vect{e}_i \in \mathcal{G}_\eps}{	\vect{u} \in \{-1, 1\}^n}, 
\end{equation*}
where $\vect{e}_i$ denotes the $i$th unit vector.

In terms of variables, we denote the paraboloid coefficients $\alpha_i^l,\, \beta_i^l,\, \gamma^l$ for $i \in [n]$ and $l \in [K]$, 
specifying the $l$th paraboloid as $p^l(\vx) = \sum_{i = 1}^n \alpha_i^l x_i^2 + \sum_{i = 1}^n \beta_i^l x_i + \gamma^l$.
Furthermore, we introduce binary variables $s_\vt^l \in \{0, 1\}$ for all $l \in [K]$ and all grid points $\vt \in \mathcal{G}_\eps$.
These indicate whether the $l$th paraboloid fulfills $p^l(\vt) \geq f(\vt) - \delta$, which is a slight variation to~\labelcref{eq:condition-1-ndim} and ensures the latter.
Condition~\eqref{eq:condition-2-ndim}, however, is controlled by continuous variables $v_\vd^l \geq 0$ for $l \in [K]$ and $\vd \in \mathcal{G}$, which track violations of the integral between $f$ and a paraboloid $p^l$.

Now, we can finally formulate the \ac{mip}.
For a comprehensive presentation, nonlinear expressions are kept and their equivalent linear formulation is discussed afterwards.
\begin{subequations}
	\label{problem:original-multi-dim}
	\begin{align}
		\min \ && \sum_{l \in [K]} \sum_{\vect{d} \in \mathcal{G} \cap [\va, \vb)} v_\vd^l \label{obj:multi-dim}\\
		\text{s.t.} \ && p^l(\vt)  &\geq  f(\vt) - \delta - M_1(1 - s_\vt^l),& l \in [K],\, \vt \in \mathcal{G}_\eps, \label{eq:below-approx-multi-dim-1}\\
		&& \sum_{l \in [K]} s_\vt^l &\geq 1, & l \in [K],\, \vt \in \mathcal{G}_\eps, \label{eq:below-approx-multi-dim-2} \\
		&& \Abs{\frac{\mathrm{d}}{\mathrm{d}x_i} p^l(\vt')} & \leq 2L + M_2(1 - s^l_{\vt}),
		 \begin{split} l \in [K],\, i\in[n], \\
			\vt \in \mathcal{G}_\eps,\, \vt' \in \mathcal{N}(\vt),  \end{split} \label{eq:below-approx-multi-dim-3} \\
		&& p^l(\vd) & \leq f(\vd) - \nu\eps, & l \in [K], \, \vd \in \mathcal{G}, \label{eq:above-approx-multi-dim} \\
		&& v_\vd^l &\geq \int\limits_{\mathcal{B}(\vd)} p^l(\vx) - (f(\vx) - \nu\eps) \mathrm{d}\vx, & l \in [K],\, \vd \in \mathcal{G}, \label{eq:integral-tracking-multi-dim} \\
		&& \Abs{\frac{\mathrm{d}}{\mathrm{d}x_i} p^l(\va)} &\leq C, &l \in [K],\, i \in [n], \label{eq:gradient-bound-a} \\
		&& \Abs{\frac{\mathrm{d}}{\mathrm{d}x_i} p^l(\vb)} &\leq C, &l \in [K],\, i \in [n], \label{eq:gradient-bound-b}\\
		&& \alpha_i^l,\, \beta_i^l,\, \gamma^l &\in \reals, & l \in [K],\, i \in [n], \label{vars:paraboloid}\\
		&& s_\vt^l &\in \{0, 1\}, & l \in [K], \, \vt \in \mathcal{G}_\eps, \label{vars:contain-multi-dim}\\
		&& v_\vd^l & \geq 0, & l \in [K],\, \vd \in \mathcal{G}.	\label{vars:integral-tracking-multi-dim}
	\end{align}
\end{subequations}
Since $\frac{\mathrm{d}}{\mathrm{d}x_i} p^l(\vx) = 2\alpha_i^l x_i + \beta_i^l$ for $l \in [K]$, 
the derivative from above is a linear expression in the variables $\alpha_i^l$ and $\beta_i^l$.
Further, for a function $h(\vx) : \mathcal{D} \mapsto \reals$, we rewrite $\abs{h(\vx)} \leq C$ as $h(\vx) \leq C$ and $h(\vx) \geq -C$.
This allows to rewrite constraints~\eqref{eq:below-approx-multi-dim-3}, \eqref{eq:gradient-bound-a}, and \eqref{eq:gradient-bound-b} as linear inequalities.
Similarly, the anti-derivative of $p^l$ is a linear term in the variables $\alpha_i^l$, $\beta_i^l$ and $\gamma^l$. 
Combined with $\mu_f$ the integral in~\eqref{eq:integral-tracking-multi-dim} is evaluated and the constraints reduce to another set of linear inequalities.
This concludes the formulation of the problem by means of \ac{mip} techniques.

\subsection{Existence \& Validity of a Solution}

We have to show two aspects regarding model~\eqref{problem:original-multi-dim}: the existence of a solution for appropriate choice of parameters and the validity of conditions~\eqref{eq:condition-1-ndim} and \eqref{eq:condition-2-ndim} for such a solution.
The following theorem provides the first one.

\begin{theorem}[Existence]
	\label{thm:upper-bound-K}
	Let $0 < \Delta \leq \min\{ \frac{n+1}{n^2}\frac{\eps-\delta}{3L}, \frac{2\nu\eps}{(\sqrt{3} - 1)n (C + L)}, \frac{2\delta}{nL} \}$.
	Further, assume that $\Delta d_i = \Delta t_i = \Delta$ for all $i \in [n]$, $C = 2L\norm[\infty]{\vb - \va} / \Delta$, and $\nu = \delta/(2\eps)$.
	Then, for 
	\begin{equation*}
		K = \abs{\mathcal{G}_\eps} = \prod_{i = 1}^n \left\lceil \frac{b_i - a_i}{\Delta} \right\rceil,
	\end{equation*}
	problem~\labelcref{problem:original-multi-dim} has an optimal solution with an objective value of zero.
\end{theorem}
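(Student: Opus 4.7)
The plan is to exhibit an explicit feasible point with objective value zero. Since $K = \abs{\mathcal{G}_\eps}$, I enumerate $\mathcal{G}_\eps = \{\vt^1, \dots, \vt^K\}$ and attach one paraboloid to each grid point: the $l$th paraboloid is centered at $\vt^l$, is tight at $\vt^l$ against $f - \delta$, and decays quadratically away from it. Concretely, set $\rho \coloneqq L/\Delta$ and define
\[
p^l(\vx) \coloneqq f(\vt^l) - \delta - \rho \sum_{i=1}^n (x_i - t_i^l)^2,
\]
which corresponds to the coefficient choices $\alpha_i^l = -\rho$, $\beta_i^l = 2\rho t_i^l$, and $\gamma^l = f(\vt^l) - \delta - \rho \sum_i (t_i^l)^2$. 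The binary variables are chosen as $s_\vt^l = 1$ iff $\vt = \vt^l$ (bijecting paraboloids and grid points), and the violation variables as $v_\vd^l = 0$ throughout, so the objective is automatically zero; only feasibility needs to be verified.

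Most verifications are routine. Constraint \eqref{eq:below-approx-multi-dim-1} holds with equality at $\vt^l$ (where $s_{\vt^l}^l = 1$) and is absorbed by $M_1$ elsewhere; \eqref{eq:below-approx-multi-dim-2} holds since each grid point has exactly one active paraboloid; for \eqref{eq:below-approx-multi-dim-3} the derivative evaluates to $\Abs{\tfrac{\mathrm{d}}{\mathrm{d}x_i} p^l(\vt')} = 2\rho|t_i' - t_i^l| = 2\rho\Delta = 2L$ for every $\vt' \in \mathcal{N}(\vt^l)$, matching the active bound. The gradient bounds \eqref{eq:gradient-bound-a}--\eqref{eq:gradient-bound-b} follow from $\abs{2\rho(x_i - t_i^l)} \leq 2\rho\norm[\infty]{\vb - \va} = 2L\norm[\infty]{\vb - \va}/\Delta = C$. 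Once \eqref{eq:above-approx-multi-dim} is established, the integrand of \eqref{eq:integral-tracking-multi-dim} is non-positive, so $v_\vd^l = 0$ is feasible.

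The crux, and main obstacle, is establishing the global dominance \eqref{eq:above-approx-multi-dim}, which I would actually prove for every $\vx \in \domain$. Applying Lipschitz continuity in the form $f(\vt^l) - f(\vx) \leq L\norm[1]{\vx - \vt^l}$ to the definition of $p^l$ yields
\[
p^l(\vx) - (f(\vx) - \nu\eps) \leq L\norm[1]{\vx - \vt^l} - \rho \sum_{i=1}^n (x_i - t_i^l)^2 - (\delta - \nu\eps).
\]
Splitting the sum coordinate-wise and maximizing each one-dimensional piece $r \mapsto Lr - \rho r^2$ over $r \geq 0$ gives the unconstrained maximum $L^2/(4\rho)$ per coordinate, hence a total bound $nL^2/(4\rho) = nL\Delta/4$ on the middle terms. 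With $\nu = \delta/(2\eps)$ the constant $\delta - \nu\eps$ simplifies to $\delta/2$, so the required non-positivity reduces exactly to $\Delta \leq 2\delta/(nL)$, which is the third term in the hypothesized minimum on $\Delta$. This is the essential Lipschitz-versus-quadratic tradeoff that justifies choosing one paraboloid per grid point of $\mathcal{G}_\eps$ and that forces the stated bound on $\Delta$; with it in hand, all constraints are satisfied and the constructed point attains objective zero.
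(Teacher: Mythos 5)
Your construction is exactly the paper's: one paraboloid per grid point of $\mathcal{G}_\eps$, with vertex at $f(\vt^l)-\delta$ and curvature $-L/\Delta$ in each coordinate (the paper obtains the same coefficients $\alpha_i=-L/\Delta$, $\beta_i=2Lt_i/\Delta$ by prescribing slopes $\mp 2L$ at the neighboring vertices), the same verification of the slope and gradient-bound constraints, and the same key estimate reducing dominance over $f-\nu\eps$ to $nL\Delta/4\le\delta/2$, i.e.\ $\Delta\le 2\delta/(nL)$ with $\nu=\delta/(2\eps)$; your coordinate-wise maximization of $Lr-\rho r^2$ is just the paper's critical-point computation at $x_i'=\pm\Delta/2$ in different clothing. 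The proposal is correct and takes essentially the same route, omitting only the routine remark that the choice $\Delta t_i=\Delta d_i=\Delta$ is consistent with the standing requirements \labelcref{eq:choice-delta-ti,eq:choice-delta-di}.
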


\begin{proof}
	A detailed proof can be found in~\Cref{subsec:proof-thm-existence}.
\end{proof}

Before turning to the validity of a solution, we demonstrate a connection between bounds to the partial derivatives and the Lipschitz continuity of a paraboloid. 

\begin{lemma}
	\label{le:p-Lipschitz}
	Let $p(\vx) = \sum_{i = 1}^n \alpha_i x_i^2 + \sum_{i = 1}^n \beta_i x_i + \gamma$ be a paraboloid and $\domain' = [\va', \vb']$.
	If there exists $C > 0$ such that 
	\begin{equation*}
		\Abs{\frac{\mathrm{d}}{\mathrm{d}x_i} p(\va')} \leq C \qquad \land \qquad \Abs{\frac{\mathrm{d}}{\mathrm{d}x_i} p(\vb')} \leq C,
	\end{equation*}
	it follows that
	\begin{equation*}
		\forall \vx \in \domain': \Abs{\frac{\mathrm{d}}{\mathrm{d}x_i} p(\vx)} \leq C,
	\end{equation*}
	for all $i \in [n]$.
	Further, $p$ is Lipschitz continuous on $\mathcal{D}'$ with Lipschitz constant $nC$ with respect to $\norm[\infty]{\fcdot}$ and with Lipschitz constant $C$ with respect to $\norm[1]{\fcdot}$.
\end{lemma}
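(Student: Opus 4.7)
The plan is to exploit the very special structure of a paraboloid without bivariate terms: its partial derivative with respect to $x_i$ depends only on $x_i$. Concretely, $\tfrac{\mathrm{d}}{\mathrm{d}x_i} p(\vx) = 2\alpha_i x_i + \beta_i$, which is affine in the single scalar variable $x_i$. Therefore the hypotheses at the two corner points $\va'$ and $\vb'$ already pin down the values of this affine map at the two endpoints $a'_i$ and $b'_i$ of the interval in which $x_i$ ranges.

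First I would verify the pointwise bound on the partial derivatives. Fix $i \in [n]$ and let $\vx \in \domain'$. Write $x_i = \lambda a'_i + (1-\lambda) b'_i$ for some $\lambda \in [0,1]$. Because $2\alpha_i x_i + \beta_i$ is affine in $x_i$, one obtains
\begin{equation*}
  2\alpha_i x_i + \beta_i = \lambda (2\alpha_i a'_i + \beta_i) + (1-\lambda)(2\alpha_i b'_i + \beta_i),
\end{equation*}
and the triangle inequality together with the two hypotheses yields $|2\alpha_i x_i + \beta_i| \leq \lambda C + (1-\lambda) C = C$. This proves the first claim for every $i$ and every $\vx \in \domain'$, and in particular shows that $\norm[\infty]{\nabla p(\vx)} \leq C$ on $\domain'$.

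Next I would pass from the gradient bound to the two Lipschitz estimates. For any $\vx, \vect{y} \in \domain'$, I would apply the mean value theorem along the segment from $\vect{y}$ to $\vx$ (which stays inside the box $\domain'$ by convexity), giving some $\vect{\xi} \in \domain'$ with
\begin{equation*}
  p(\vx) - p(\vect{y}) = \sum_{i=1}^n \tfrac{\mathrm{d}}{\mathrm{d}x_i} p(\vect{\xi})\,(x_i - y_i).
\end{equation*}
Applying the uniform bound established above component-wise gives $|p(\vx) - p(\vect{y})| \leq C \sum_{i=1}^n |x_i - y_i| = C \norm[1]{\vx - \vect{y}}$, which is the Lipschitz constant $C$ with respect to $\norm[1]{\fcdot}$. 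The bound with respect to $\norm[\infty]{\fcdot}$ then follows immediately from $\norm[1]{\vx - \vect{y}} \leq n \norm[\infty]{\vx - \vect{y}}$, yielding the constant $nC$.

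I do not expect a genuine obstacle here; the content is essentially the observation that the absence of bivariate quadratic terms makes each partial derivative univariate-affine, so that a maximum-principle-type argument on a single interval suffices. The only point worth stating carefully is that the convex combination argument applies coordinate-wise and independently of the other coordinates of $\va'$ and $\vb'$, which is exactly what the paraboloid form grants us.
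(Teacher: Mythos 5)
Your proposal is correct and follows essentially the same route as the paper: the coordinate-wise affine structure of $\frac{\mathrm{d}}{\mathrm{d}x_i}p$ gives the uniform gradient bound via a convex-combination argument, and the Lipschitz estimates follow from the mean value theorem along the segment between two points. The only cosmetic difference is that the paper invokes Hölder's inequality twice on $\nabla p(\vect{\xi})\T(\vect{y}-\vx)$ to get both constants directly, whereas you bound component-wise to get the $\norm[1]{\fcdot}$ constant $C$ and then deduce the $\norm[\infty]{\fcdot}$ constant $nC$ from $\norm[1]{\vx-\vect{y}} \leq n\norm[\infty]{\vx-\vect{y}}$; these are equivalent.
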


\begin{proof}
	A detailed proof can be found in~\Cref{subsec:proof-le-p-Lipschitz}.
\end{proof}

The lemma shows that the bounds in~\labelcref{eq:gradient-bound-a} and~\labelcref{eq:gradient-bound-b} directly force a paraboloid defined by a feasible solution to be Lipschitz continuous with constant $C$.
Additionally, inequalities~\labelcref{eq:below-approx-multi-dim-3} ensure that a paraboloid $p^l$ with $s^l_\vt = 1$ is Lipschitz continuous on the neighborhood $\mathcal{N}(\vt)$ of $\vt$ with constant $2L$. 

Combining this implied Lipschitz continuity of a paraboloid with the Lipschitz continuity of $f$ turns the difference of both into a Lipschitz function itself.
We leverage this effect to prove bounds on this difference, while abstracting the situation.
We begin by establishing a lower bound based on the Lipschitz property and non-negativity on the vertices of a given domain. 

\begin{lemma}
	\label{le:g-lower-bound}
	Let $g \in \mathcal{C}^0(\domain')$ be Lipschitz continuous with constant $L_g > 0$ and $\domain' = [\va', \vb'] \subseteq \domain$.
	If $g(\vv) \geq 0$ for all vertices $\vv$ of $\domain'$, then it holds true that
	\begin{equation*}
		\min_{\vx \in \domain'} g(\vx) \geq -\frac{L_g n}{n+1} \norm[1]{\vb' - \va'}.
	\end{equation*}
\end{lemma}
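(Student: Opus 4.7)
The plan is to apply a Carathéodory-type argument: any point in the box $\domain' = [\va', \vb']$ can be written as a convex combination of at most $n+1$ of its vertices, and by pigeonhole at least one of the coefficients must be $\geq 1/(n+1)$. This is exactly the source of the factor $n/(n+1)$ in the claimed bound.

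First, I would pick a minimizer $\vx^* \in \argmin_{\vx \in \domain'} g(\vx)$, which exists since $g$ is continuous on the compact set $\domain'$. Because $\domain'$ is the convex hull of its vertices, Carathéodory's theorem in $\R^n$ yields vertices $\vv^0, \dots, \vv^n$ of $\domain'$ and non-negative weights $\lambda_0, \dots, \lambda_n$ summing to $1$ such that $\vx^* = \sum_{k=0}^{n} \lambda_k \vv^k$. By pigeonhole, there exists an index $k^\star$ with $\lambda_{k^\star} \geq 1/(n+1)$.

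Next, the hypothesis $g(\vv^{k^\star}) \geq 0$ together with Lipschitz continuity w.r.t.\ $\norm[1]{\fcdot}$ immediately gives $g(\vx^*) \geq -L_g \norm[1]{\vx^* - \vv^{k^\star}}$. To bound the right-hand side I would write $\vx^* - \vv^{k^\star} = \sum_{k \neq k^\star} \lambda_k (\vv^k - \vv^{k^\star})$ and apply the triangle inequality. Since any two vertices $\vv, \vv'$ of the box satisfy $\norm[1]{\vv - \vv'} \leq \norm[1]{\vb' - \va'}$ (each coordinate difference is $0$ or $\pm(b'_i - a'_i)$), this yields $\norm[1]{\vx^* - \vv^{k^\star}} \leq (1 - \lambda_{k^\star}) \norm[1]{\vb' - \va'} \leq \tfrac{n}{n+1} \norm[1]{\vb' - \va'}$, which delivers the claim.

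The main subtlety is choosing the right decomposition of $\vx^*$: arguing only with the single closest vertex of the box produces the slicker constant $1/2$, but it is the Carathéodory/pigeonhole route that lands exactly on the constant $n/(n+1)$ the paper uses to match its choice of grid widths in~\labelcref{eq:choice-delta-ti}. All other steps (existence of the minimum on a compact set, the convex-combination triangle inequality, and the vertex-diameter estimate) are routine.
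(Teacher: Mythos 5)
Your proof is correct, and after the common starting point it takes a genuinely different (and shorter) route than the paper. Both arguments pick a minimizer $\sol$ and invoke Carath\'eodory to write $\sol$ as a convex combination of $n+1$ vertices, and both use the identity $\sol - \vv^k = \sum_{j\neq k}\lambda_j(\vv^j - \vv^k)$. From there the paper applies the Lipschitz estimate at \emph{every} vertex of the decomposition, evaluates the resulting 1-norms exactly by exploiting that each vertex coordinate is $a_i'$ or $b_i'$ (so the inner sums have a consistent sign), then sums over all $n+1$ vertices, interchanges the double sum, and divides by $n+1$; the factor $n$ appears because for each $j$ there are $n$ indices $k\neq j$, each contributing at most $\norm[1]{\vb'-\va'}$. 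You instead use pigeonhole to select a single vertex $\vv^{k^\star}$ with $\lambda_{k^\star}\geq 1/(n+1)$, apply the Lipschitz bound only there, and estimate $\norm[1]{\sol-\vv^{k^\star}}\leq (1-\lambda_{k^\star})\norm[1]{\vb'-\va'}\leq \tfrac{n}{n+1}\norm[1]{\vb'-\va'}$ via the triangle inequality and the vertex-diameter bound. Your version avoids the coordinate-wise sign bookkeeping and the sum interchange entirely, at the price of a cruder triangle-inequality step that happens to land on the same constant; the paper's averaging makes the origin of the factor $n/(n+1)$ explicit but is longer. One remark on your aside: the nearest-vertex argument you mention gives the constant $1/2$, which is \emph{at most} $n/(n+1)$ for all $n\geq 1$, so it would in fact prove a slightly stronger bound that still implies \Cref{le:g-lower-bound}; choosing the Carath\'eodory route to reproduce the paper's constant is perfectly fine but not necessary for the lemma as stated.
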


\begin{proof}
	A detailed proof can be found in~\Cref{subsec:proof-le-g-lower-bound}.
\end{proof}

Note that this result provides an analogous upper bound to $\max_{\vx \in \domain'} g(\vx)$ when assuming $g(\vv) \leq 0$, respectively.
This would already suffice to construct an \ac{mipm} which provably returns a solution that satisfies~\labelcref{eq:condition-1-ndim} and~\labelcref{eq:condition-2-ndim} if the objective value is zero.
However, we have included the availability of a Lebesgue measure to be able to evaluate the integral in~\labelcref{eq:integral-tracking-multi-dim}, which can be leveraged to improve such bounds. 
In the following lemma, this is demonstrated.

\begin{lemma}
	\label{le:g-upper-bound}
	Let $g \in \mathcal{C}^0(\domain')$ be Lipschitz continuous with constant $L_g > 0$ and $\domain' = [\va', \vb'] \subseteq \domain$ full dimensional.
	If $g(\vv) \leq 0$ for all vertices $\vv$ of $\domain'$ and $\int_{\domain'} g(\vx) \mathrm{d}\vx \leq 0$, 
	then it holds true that 
	\begin{equation*}
		\max_{\vx \in \domain'} g(\vx) \leq \frac{\sqrt{3} - 1}{2} \Delta_{\max} n L_g,
	\end{equation*}
	where $\Delta_{\max} = \max_{i \in [n]} b_i' - a_i'$.
	
\end{lemma}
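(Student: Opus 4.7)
The plan is to establish two estimates that close a loop on $M := \max_{\vx \in \domain'} g(\vx)$. Assume $M > 0$; otherwise the right-hand side trivially majorizes $M$. Fix a maximizer $\vx^*$ of $g$ on $\domain'$ and set $u_i := x_i^* - a_i'$, $v_i := b_i' - x_i^*$, $\Delta_i := b_i' - a_i' = u_i + v_i$. The first estimate will exploit the integral hypothesis via the Lipschitz envelope of $g$ at $\vx^*$; the second will exploit the vertex hypothesis, again via Lipschitz.

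For the first estimate, I integrate the pointwise bound $g(\vx) \geq M - L_g\norm[1]{\vx - \vx^*}$ over $\domain'$ and use $\int_{\domain'} g \leq 0$; Fubini together with the coordinate-wise identity $\int_{a_i'}^{b_i'} \abs{x_i - x_i^*}\,dx_i = (u_i^2 + v_i^2)/2$ yields
\[
M \leq L_g\,\bar d, \qquad \bar d := \sum_{i=1}^n \frac{u_i^2 + v_i^2}{2\Delta_i}.
\]
For the second estimate, Lipschitz together with $g(\vv) \leq 0 < M$ at each vertex $\vv$ gives $\norm[1]{\vx^* - \vv} \geq M/L_g$; the closest vertex lies at $\ell_1$-distance $\sum_i \mu_i$, where $\mu_i := \min(u_i,v_i) \in [0,\Delta_i/2]$, so $\sum_i \mu_i \geq M/L_g$. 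Writing $u_i v_i = \mu_i(\Delta_i - \mu_i)$, one checks the identity $(u_i^2 + v_i^2)/(2\Delta_i) = \Delta_i/2 - \mu_i + \mu_i^2/\Delta_i$, and the elementary estimate $\mu_i^2/\Delta_i \leq \mu_i/2$ (immediate from $\mu_i \leq \Delta_i/2$) bounds each term to give
\[
\bar d \leq \frac{\norm[1]{\vb' - \va'}}{2} - \frac{1}{2}\sum_{i=1}^n \mu_i \leq \frac{\norm[1]{\vb' - \va'}}{2} - \frac{M}{2L_g}.
\]

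Closing the loop yields $3M/2 \leq L_g\norm[1]{\vb'-\va'}/2$, hence $M \leq L_g\norm[1]{\vb'-\va'}/3 \leq L_g n \Delta_{\max}/3$, and since $1/3 < (\sqrt{3}-1)/2$, the stated bound follows. I expect the main obstacle to be spotting the coordinate-wise inequality $\mu_i^2/\Delta_i \leq \mu_i/2$: this is what turns the scalar vertex constraint $\sum_i \mu_i \geq M/L_g$ into a genuine upper bound on the quadratic quantity $\bar d$, so that the integral bound can be fed back into itself. Everything else is Fubini bookkeeping and standard Lipschitz manipulation at the maximizer and at vertices.
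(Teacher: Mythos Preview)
Your proof is correct and in fact yields a sharper constant than the paper's: you obtain $M \le \tfrac{1}{3}\,n\,\Delta_{\max}\,L_g$, whereas the paper proves the weaker $M \le \tfrac{\sqrt{3}-1}{2}\,n\,\Delta_{\max}\,L_g$.

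Both arguments start identically: integrate the Lipschitz cone $g(\vx)\ge M - L_g\norm[1]{\vx-\vx^*}$ over $\domain'$, apply Fubini, and use $\int g \le 0$ to get $M \le L_g\,\bar d$ with $\bar d = \sum_i (u_i^2+v_i^2)/(2\Delta_i) = \tfrac{1}{2}\norm[1]{\vb'-\va'} - \sum_i u_iv_i/\Delta_i$. Both then invoke the vertex hypothesis via the nearest vertex, which sits at $\ell_1$-distance $\sum_i \mu_i \ge M/L_g$. The divergence is in how the quadratic term $\sum_i u_iv_i/\Delta_i$ is controlled. The paper bounds $u_iv_i \ge \mu_i^2$ and $\Delta_i \le \Delta_{\max}$, passes to $\norm[2]{\cdot}^2$, applies $\norm[2]{\cdot}^2 \ge \norm[1]{\cdot}^2/n$, and feeds in $\sum_i \mu_i \ge M/L_g$; this produces a genuine quadratic inequality in $M$ whose positive root is the stated bound. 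You instead keep the exact identity $u_iv_i/\Delta_i = \mu_i - \mu_i^2/\Delta_i$ and use the elementary coordinate-wise bound $\mu_i^2/\Delta_i \le \mu_i/2$ (from $\mu_i \le \Delta_i/2$), which turns the estimate into a \emph{linear} inequality $M \le \tfrac{L_g}{2}\norm[1]{\vb'-\va'} - \tfrac{M}{2}$. Your route is shorter, avoids both the $\ell_1$--$\ell_2$ norm inequality and the quadratic-formula step, and loses less: the paper's detour through $\Delta_{\max}$ and the norm comparison is where the extra slack enters.
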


\begin{proof}
	A detailed proof can be found in~\Cref{subsec:proof-le-g-upper-bound}.
\end{proof}

To get a feeling for the magnitudes of both bounds, let's consider an approximately cubic domain, in particular, roughly equidistant along each coordinate such that $\norm[1]{\vb' - \va'} \approx n\Delta_{\max}$.
Then, it remains to notice that $0.366 \approx (\sqrt{3} - 1)/2 <  n/(n+1)$ for all $n \in \naturals$.
We remark that the bound is tighter in any case for $n = 1$ and in cases of approximately cubic domains also for $n \geq 2$. 

Now, \Cref{le:g-lower-bound} and~\Cref{le:g-upper-bound} are leveraged in combination with~\Cref{le:p-Lipschitz} to show that a solution of~\labelcref{problem:original-multi-dim} with zero objective satisfies~\labelcref{eq:condition-1-ndim} and~\labelcref{eq:condition-2-ndim}, respectively. 
We start with the former case.
	
\begin{theorem}[Validity of~\eqref{eq:condition-1-ndim}]
	\label{thm:correctness-1}
	If $(\alpha_i^l, \beta_i^l, \gamma^l)$, $s^l_\vt$, for $\vt \in \mathcal{G}_\eps$, $l \in [K]$, $i \in [n]$, satisfy the constraints~\labelcref{eq:below-approx-multi-dim-1,eq:below-approx-multi-dim-2,eq:below-approx-multi-dim-3}, 
	then the paraboloids $(p^l)_{l\in K}$ defined by the parameters $(\alpha_i^l, \beta_i^l, \gamma^l)$ fulfill~\eqref{eq:condition-1-ndim}.
\end{theorem}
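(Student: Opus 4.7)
My plan is to establish~\eqref{eq:condition-1-ndim} pointwise. For each $\vx \in \domain$ I exhibit an index $l \in [K]$ with $p^l(\vx) \geq f(\vx) - \eps$; since $\max_l p^l(\vx) \geq p^l(\vx)$, this is enough. Using the partition~\eqref{eq:domain-partition} I pick $\vt \in \mathcal{G}_\eps \cap [\va, \vb)$ with $\vx \in [\vt, \vt + \Delta \vt]$ and let $\vv$ be the vertex of this cell minimizing $\norm[1]{\vx - \vv}$. Coordinate-wise rounding yields $\norm[1]{\vx - \vv} \leq \tfrac{1}{2}\sum_{i=1}^{n}\Delta t_i$. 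From~\eqref{eq:below-approx-multi-dim-2} there exists $l \in [K]$ with $s^l_\vv = 1$, and then~\eqref{eq:below-approx-multi-dim-1} (the big-$M$ term vanishing) supplies $p^l(\vv) \geq f(\vv) - \delta$.

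The analytic core is to show that this $p^l$ is Lipschitz with constant $2L$ with respect to $\norm[1]{\fcdot}$ on a region containing $\vv$ and $\vx$. Constraint~\eqref{eq:below-approx-multi-dim-3} activated by $s^l_\vv = 1$ gives $\Abs{\frac{\mathrm{d}}{\mathrm{d}x_i} p^l(\vv')} \leq 2L$ for every $\vv' \in \mathcal{N}(\vv)$ and every $i \in [n]$. Because the paraboloid has no bivariate terms, $\frac{\mathrm{d}}{\mathrm{d}x_i} p^l$ is univariate in $x_i$, so the opposite neighbors $\vv \pm \Delta \vt \in \mathcal{N}(\vv) \cap \mathcal{G}_\eps$ supply the two corner bounds required by~\Cref{le:p-Lipschitz} on the enveloping box $[\vv - \Delta \vt, \vv + \Delta \vt]$. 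The first conclusion of that lemma extends the gradient bound throughout the envelope, and the second upgrades this to Lipschitz continuity of $p^l$ with constant $2L$ in $\norm[1]{\fcdot}$; since the cell containing $\vx$ sits inside this envelope, the bound transfers to $\vx$.

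Combining the two Lipschitz estimates with the vertex bound produces
\begin{equation*}
	p^l(\vx) - f(\vx) \geq \bigl(p^l(\vx) - p^l(\vv)\bigr) + \bigl(p^l(\vv) - f(\vv)\bigr) + \bigl(f(\vv) - f(\vx)\bigr) \geq -\delta - 3L\norm[1]{\vx - \vv}.
\end{equation*}
Substituting $\norm[1]{\vx - \vv} \leq \tfrac{1}{2}\sum_i \Delta t_i$ and invoking~\eqref{eq:choice-delta-ti} yields $3L \norm[1]{\vx - \vv} \leq \tfrac{n+1}{2n}(\eps - \delta) \leq \eps - \delta$, where the last inequality uses $\tfrac{n+1}{2n} \leq 1$ for $n \geq 1$. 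Hence $p^l(\vx) \geq f(\vx) - \eps$, closing the argument.

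The main obstacle I foresee is the Lipschitz step when $\vv$ lies on $\partial \domain$: in coordinates where $v_i \in \{a_i, b_i\}$ the outward neighbor is absent from $\mathcal{N}(\vv)$, so~\Cref{le:p-Lipschitz} cannot be applied to the symmetric envelope $[\vv - \Delta \vt, \vv + \Delta \vt]$ verbatim. The natural remedy is either to choose $\vv$ as the interior-facing vertex of the cell (so that the needed neighbors stay inside $\domain$) or to switch to a one-sided envelope such as $[\vv, \vv + 2\Delta\vt]$ whose corners remain in $\mathcal{G}_\eps$, paired with~\eqref{eq:below-approx-multi-dim-3} applied at an adjacent interior grid point $\vt'$ with $\vv \in \mathcal{N}(\vt')$\xxx while preserving the distance estimate $\norm[1]{\vx - \vv} \leq \tfrac{1}{2}\sum_i \Delta t_i$ or an equivalent one that keeps the final bound below $\eps - \delta$. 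Apart from this boundary bookkeeping, the argument reduces to a triangle inequality on Lipschitz functions.
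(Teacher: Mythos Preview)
Your argument is correct (modulo the boundary bookkeeping you flag) and takes a genuinely different route from the paper's proof. The paper works with the function $g(\vx) = \max_{l \in I_\vt} p^l(\vx) - (f(\vx) - \delta)$ on the whole cell $[\vt, \vt + \Delta\vt]$: it first shows that every $p^l$ with $l \in I_\vt$ is $2L$-Lipschitz on the cell (via \Cref{le:p-Lipschitz}), then that the pointwise maximum inherits this property, and finally invokes \Cref{le:g-lower-bound}\,---\,a Carath\'eodory-based bound using non-negativity of $g$ at all $2^n$ vertices simultaneously\,---\,to obtain $\min_{\vx} g(\vx) \geq -\tfrac{3Ln}{n+1}\norm[1]{\Delta\vt}$. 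Your proof bypasses both the max-of-Lipschitz step and \Cref{le:g-lower-bound}: you pick a single vertex $\vv$ (the closest one), a single index $l$ with $s^l_\vv = 1$, and reduce everything to a one-line triangle inequality. The closest-vertex choice buys you the factor $\tfrac{1}{2}$ in $\norm[1]{\vx - \vv} \leq \tfrac{1}{2}\sum_i \Delta t_i$, which plays exactly the role that the factor $\tfrac{n}{n+1}$ from \Cref{le:g-lower-bound} plays in the paper; your resulting bound $\tfrac{n+1}{2n}(\eps-\delta)$ is even slightly tighter. The gain is a more elementary, self-contained argument.

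The boundary obstacle you identify is real and, notably, is also glossed over in the paper's own proof (its claim that each $p^l$ with $l \in I_\vt$ is $2L$-Lipschitz on the cell tacitly needs $\vt' \pm \Delta\vt \in \mathcal{G}_\eps$ for the relevant vertex $\vt'$). Be aware, though, that your route is slightly more fragile here: if the fix forces you off the closest vertex in some coordinate, you lose the $\tfrac{1}{2}$ factor there, and the naive estimate $3L\sum_i \Delta t_i \leq \tfrac{n+1}{n}(\eps - \delta)$ overshoots $\eps - \delta$. The paper's \Cref{le:g-lower-bound} absorbs this via its built-in $\tfrac{n}{n+1}$ factor regardless of which vertex carries $s^l = 1$, so the boundary patch is more delicate in your framework than your closing paragraph suggests.
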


\begin{proof}
	Recall from~\labelcref{eq:domain-partition} that $\domain$ is the union of $[\vt, \vt + \Delta \vt]$ over all $\vt \in \mathcal{G}_\eps \cap [\va, \vb)$.
	Hence, consider an arbitrary but fixed $\vt \in \mathcal{G}_\eps \cap [\va, \vb)$ and the box $[\vt, \vt + \Delta \vt]$.
	
	Observe that all vertices $\vt'$ of $[\vt, \vt + \Delta\vt]$ are again vectors in $\mathcal{G}_\eps$.
	Therefore, \labelcref{eq:below-approx-multi-dim-2} with the integrality restriction on $s_\vt^l$ ensure that for all such $\vt'$, 
	there exists $l \in [K]$ such that $s_{\vt'}^l = 1$.
	We collect these indices in 
	\begin{equation*} 
		I_\vt = \{ l \in [K] \mid \exists \vt' \text{ vertex of } [\vt, \vt + \Delta \vt]: s_{\vt'}^l = 1\}.
	\end{equation*}
	Now, for all $l \in I_\vt$, the inequalities~\labelcref{eq:below-approx-multi-dim-3} in combination with~\Cref{le:p-Lipschitz} ensure that paraboloid $p^l$ is Lipschitz continuous on $[\vt, \vt + \Delta\vt]$ with constant $2L$.
	Indeed, we can show that this Lipschitz property transfers to $\max_{l \in I_\vt} p^l(\vx)$.
	To this end, consider $\vx,\, \vect{y} \in [\vt, \vt + \Delta\vt]$ and let $l_\vx \in \argmax_{l \in I_\vt} p^l(\vx)$, as well as $l_\vect{y} \in \argmax_{l \in I_\vt}p^l(\vect{y})$. 
	If $p^{l_\vx}(\vx) \geq p^{l_\vect{y}}(\vect{y})$, we can derive
	\begin{align*}
		\abs{\max_{l \in I_\vt} p^l (\vx) - \max_{l \in I_\vt}p^l (\vect{y})} &= \abs{p^{l_\vx}(\vx) - p^{l_\vect{y}}(\vect{y})} = p^{l_\vx}(\vx) - p^{l_\vect{y}}(\vect{y}) \\
		& \leq p^{l_\vx}(\vx) - p^{l_\vx}(\vect{y}) \leq \abs{p^{l_\vx}(\vx) - p^{l_\vx}(\vect{y})} \leq 2L\norm[1]{\vx - \vect{y}},
	\end{align*}
	where we used the fact $p^{l_\vect{y}}(\vect{y}) \geq p^l(\vect{y})$ for all $l \in I_\vt$.
	The case $p^{l_\vx}(\vx) \leq p^{l_\vect{y}}(\vect{y})$ follows analogously. 
	Now, define $g(\vx) \coloneqq \max_{l \in I_\vt}p^l(\vx) - (f(\vx) - \delta)$.
	The above result and the Lipschitz property of $f$ give by the triangle inequality that $g$ is Lipschitz continuous with constant $L_g = 2L + L = 3L$.
	
	In addition, for all vertices $\vt'$ of $[\vt, \vt + \Delta \vt]$, there exists $l \in I_\vt$ with $s_{\vt'}^l = 1$ and by~\labelcref{eq:below-approx-multi-dim-1} ${p^l(\vt') \geq f(\vt') - \delta}$.
	In particular, $\max_{l \in I_\vt} p^l(\vt') \geq f(\vt') - \delta$ and thus $g(\vt') \geq 0$.
	This allows to apply \Cref{le:g-lower-bound} and we receive
	\begin{equation*}
		\min_{\vx' \in [\vt, \vt + \Delta\vt]} g(\vx') \geq - \frac{3Ln}{n+1} \norm[1]{\Delta \vt}.
	\end{equation*}
	Due to~\eqref{eq:choice-delta-ti} it holds true that $\norm[1]{\Delta\vt} = \sum_{i = 1}^n \Delta t_i \leq \frac{n+1}{n} \frac{\eps - \delta}{3L}$.
	This then gives that for all $\vx \in [\vt, \vt+ \Delta\vt]$, it is
	\begin{equation*}
		\max_{l \in [K]}p^l(\vx) - (f(\vx) - \delta) \geq  g(\vx) \geq \min_{\vx' \in [\vt, \vt + \Delta\vt]} g(\vx') \geq \delta - \varepsilon,
	\end{equation*}
	which is equivalent to 
	\begin{equation*}
		\max_{l \in [K]}p^l(\vx) \geq f(\vx) - \varepsilon.
	\end{equation*}
	Since $\vt$ was considered arbitrary and the domain $\domain$ is a union of all such boxes $[\vt, \vt + \Delta \vt]$, the claim follows.
\end{proof}
	
One can observe that the first condition~\labelcref{eq:condition-1-ndim} is enforced by means of~\labelcref{eq:below-approx-multi-dim-1,eq:below-approx-multi-dim-2,eq:below-approx-multi-dim-3} together with the variables $s_\vt^l$ and their integrality restriction.
For~\labelcref{eq:condition-2-ndim}, the remaining constraints together with the non-negative variables $v_\vd^l$ and the objective~\labelcref{obj:multi-dim} come into play.

\begin{theorem}[Validity of~\eqref{eq:condition-2-ndim}]
	\label{thm:correctness-2}
	If $(\alpha_i^l, \beta_i^l, \gamma^l)$, $v^l_\vd$, for $\vd \in \mathcal{G}$, $l \in [K]$, $i \in [n]$, satisfy the constraints~\labelcref{eq:above-approx-multi-dim,eq:integral-tracking-multi-dim,eq:gradient-bound-a,eq:gradient-bound-b} and have objective value~\labelcref{obj:multi-dim} of zero,
	then the paraboloids $(p^l)_{l\in [K]}$ defined by the parameters $(\alpha_i^l, \beta_i^l, \gamma^l)$ fulfill  \eqref{eq:condition-2-ndim}.
\end{theorem}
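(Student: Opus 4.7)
The plan is to mimic the structure of the proof of Theorem~\ref{thm:correctness-1}, but now work on the grid $\mathcal{G}$ instead of $\mathcal{G}_\eps$, exploit the zero-objective and integral-tracking constraints, and invoke Lemma~\ref{le:g-upper-bound} instead of Lemma~\ref{le:g-lower-bound}. The setup is that the objective~\labelcref{obj:multi-dim} equals zero together with the non-negativity~\labelcref{vars:integral-tracking-multi-dim} forces $v_\vd^l = 0$ for every $l \in [K]$ and $\vd \in \mathcal{G}$, so~\labelcref{eq:integral-tracking-multi-dim} yields
\begin{equation*}
\int_{\mathcal{B}(\vd)} \bigl(p^l(\vx) - (f(\vx) - \nu\eps)\bigr)\,\mathrm{d}\vx \leq 0
\end{equation*}
for all $l$ and $\vd$. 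First I would fix arbitrary $l \in [K]$ and $\vd \in \mathcal{G} \cap [\va, \vb)$ and define the function $g(\vx) \coloneqq p^l(\vx) - (f(\vx) - \nu\eps)$ on the box $\mathcal{B}(\vd) = [\vd, \vd + \Delta\vd]$.

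Next I would verify the hypotheses of~\Cref{le:g-upper-bound} for this $g$ on $\mathcal{B}(\vd)$. Lipschitz continuity of $g$ with constant $L_g = C + L$ follows from the triangle inequality, using the Lipschitz assumption on $f$ and the fact that~\labelcref{eq:gradient-bound-a} and~\labelcref{eq:gradient-bound-b}, combined with~\Cref{le:p-Lipschitz}, imply that $p^l$ is Lipschitz continuous on the whole of $\domain$ with constant $C$ w.r.t.\ $\norm[1]{\fcdot}$, hence also on $\mathcal{B}(\vd)$. Every vertex $\vd'$ of $\mathcal{B}(\vd)$ lies in $\mathcal{G}$, so~\labelcref{eq:above-approx-multi-dim} yields $g(\vd') = p^l(\vd') - (f(\vd') - \nu\eps) \leq 0$; the integral condition was already established. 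Applying~\Cref{le:g-upper-bound} then gives
\begin{equation*}
\max_{\vx \in \mathcal{B}(\vd)} g(\vx) \leq \frac{\sqrt{3} - 1}{2} \Delta_{\max} n (C + L),
\end{equation*}
where $\Delta_{\max} = \max_{i \in [n]} \Delta d_i$.

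Finally, I would plug in the size bound~\labelcref{eq:choice-delta-di}, i.e.\ $\Delta d_i \leq 2\nu\eps / ((\sqrt{3}-1)n(C+L))$ for each $i$, which cancels precisely with the prefactor to give $\max_{\vx \in \mathcal{B}(\vd)} g(\vx) \leq \nu\eps$. Rearranging, $p^l(\vx) \leq f(\vx)$ for all $\vx \in \mathcal{B}(\vd)$, and since $l$ and $\vd$ were arbitrary and $\domain$ is covered by such boxes via~\labelcref{eq:domain-partition}, we conclude $p^l(\vx) \leq f(\vx)$ for every $l \in [K]$ and every $\vx \in \domain$, which is the strengthened form of~\labelcref{eq:condition-2-ndim} noted after that condition.

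I expect the proof itself to be straightforward once the constants are tracked carefully; the only delicate point is to confirm that the Lipschitz constant for $p^l$ promoted from~\Cref{le:p-Lipschitz} is indeed $C$ (not $nC$) under $\norm[1]{\fcdot}$, so that adding $L$ gives $C+L$ and the bound in~\Cref{le:g-upper-bound} collapses exactly to $\nu\eps$. No further bookkeeping with the binary variables $s^l_\vt$ or the tighter grid $\mathcal{G}_\eps$ is required, since~\labelcref{eq:condition-2-ndim} must hold for every individual paraboloid rather than just their pointwise maximum.
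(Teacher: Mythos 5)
Your proposal is correct and follows essentially the same route as the paper's proof: zero objective forces $v_\vd^l = 0$, constraints~\labelcref{eq:above-approx-multi-dim,eq:gradient-bound-a,eq:gradient-bound-b} together with \Cref{le:p-Lipschitz} give the vertex and Lipschitz hypotheses (with $L_g = C+L$), and \Cref{le:g-upper-bound} combined with the choice~\labelcref{eq:choice-delta-di} yields $p^l(\vx) \leq f(\vx)$ on each box of the partition~\labelcref{eq:domain-partition}. The constant tracking, including the use of the $\norm[1]{\fcdot}$-Lipschitz constant $C$ rather than $nC$, matches the paper exactly.
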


\begin{proof}
	Recall from~\labelcref{eq:domain-partition} that $\domain$ is the union of $[\vd, \vd + \Delta \vd]$ over all $\vd \in \mathcal{G} \cap [\va, \vb)$.
	Hence, consider an arbitrary but fixed $\vd \in \mathcal{G} \cap [\va, \vb)$ and the box $[\vd, \vd + \Delta \vd]$.
	
	Observe that all vertices $\vv$ of this box are again vectors in $\mathcal{G}$. 
	Therefore, constraints~\eqref{eq:above-approx-multi-dim} ensure that for all vertices $\vv$ and $l \in [K]$, it holds true that $p^l(\vv) \leq f(\vv) - \nu\eps$.
	
	Now, consider a fixed $l \in [K]$ and let $g(\vx) \coloneqq p^l(\vx) - (f(\vx) - \nu\eps)$.
	Above gives $g(\vv) \leq 0$ for all vertices $\vv$ of $[\vd, \vd+\Delta\vd]$.
	In addition, by~\Cref{le:p-Lipschitz} and the constraints~\labelcref{eq:gradient-bound-a,eq:gradient-bound-b}, $p^l$ is Lipschitz with constant $C$.
	Using the triangle inequality we can show that $g$ is Lipschitz continuous with constant $ L_g = C+L$.
	By assumption the objective~\labelcref{obj:multi-dim} is zero and thus $v_\vd^l = 0$.
	Combined with~\labelcref{eq:integral-tracking-multi-dim} this implies $0 \geq \int_{[\vd, \vd+\Delta\vd]} g(\vx) \mathrm{d}\vx$.
	Therefore, we can apply \Cref{le:g-upper-bound} and receive 
	\begin{equation*}
		p^l(\vx) - (f(\vx) - \nu\eps) = g(\vx) \leq \max_{\vx' \in [\vd, \vd+\Delta\vd]} g(\vx') \leq \frac{\sqrt{3} - 1}{2} n (C + L) \max_{i \in [n]}\Delta d_i ,
	\end{equation*}
	for all $\vx \in [\vd, \vd + \Delta\vd]$.
	From choosing $\Delta d_i$ according to~\labelcref{eq:choice-delta-di}, we recall that $\Delta d_i \leq (2\nu\eps)/((\sqrt{3} - 1) n (C  + L))$ and can conclude
	\begin{equation*}
		p^l(\vx) \leq f(\vx).
	\end{equation*}
	Since $l \in [K]$  and $\vd$ were considered arbitrary and the domain $\domain$ is a union of all such boxes $[\vd, \vd + \Delta \vd]$, the claim follows.
\end{proof}
	
We have now established a model which is suitable to find an approximation with the desired conditions~\labelcref{eq:condition-1-ndim,eq:condition-2-ndim}.

\section{Approximation and Relaxation by Paraboloids}
\label{sec:gbm}

The previous \ac{mipm} model is embedded in two search strategies to determine a small number of paraboloids for approximating a given function on a certain domain. 
Afterwards, we demonstrate ways to incorporate those approximations into \ac{minlp} problems and discuss potential benefits and drawbacks. 
If coupled with a relaxation of the general nonlinear constraints, we point out that the original \ac{minlp} is reduced to an \ac{miqcpm}.
In a complementary step, we motivate creating a lookup table of approximations, which can be accessed when tackling \ac{minlp} problems by the parabolic relaxation.

\subsection{Parabolic Approximation by Few Paraboloids}
\label{subsec:para-approx-algo}

We start by recalling the setting: 
Let $f$ be a Lipschitz continuous function with constant $L > 0$, which is to be approximated, and $\eps > 0$ the desired approximation guarantee.
Assume further that the choices of $\delta,\,\nu,\, C, \Delta t_i, \Delta d_i > 0$ comply with the requirements~\labelcref{eq:choice-delta-ti,eq:choice-delta-di}, as well as~\Cref{thm:upper-bound-K}.
The latter ensures the existence of a solution to problem~\eqref{problem:original-multi-dim} with $\overline{K} = \abs{\mathcal{G}_\eps}$, which meets the conditions~\eqref{eq:condition-1-ndim} and \eqref{eq:condition-2-ndim}.

Since each paraboloid results in an additional constraint when incorporated in the original problem later, 
we aim to find a small  $K^* < \overline{K}$.
A first idea is the application of a binary search in the discrete interval $[1, \overline{K}] \enifed [\underline{K}, \overline{K}]$.
To this end, one starts by solving problem~\eqref{problem:original-multi-dim} with $K = \lfloor (\overline{K}+ \underline{K})/2 \rfloor$. 
If an optimal solution with objective zero is returned, it defines a set of paraboloids which satisfy~\labelcref{eq:condition-1-ndim,eq:condition-2-ndim}.
Therefore, the upper bound is updated such that $\overline{K} \gets K$.
Otherwise, $\underline{K} \gets K + 1$.
After updating a bound, this procedure is restarted with respect to the new interval.
It terminates when $\overline{K} \leq \underline{K}$ and returns $K^* \gets \overline{K}$ as the minimal number of paraboloids.
A formalization can be found in~\Cref{alg:bin-search}.

However, since the initial $\overline{K}$ is typically large, we propose a slight adjustment to the classical binary search.
For a start, we set $K = 1$ and if the algorithm asserts in step~\labelcref{algl:bin-search-assertion}, $K$ is simply doubled in the next iteration, while respecting $\overline{K}$ as an upper bound.
This is performed, until the algorithm does not assert, \ie, until it detects a feasible number of paraboloids and updates the upper bound.
Then, \Cref{alg:bin-search} resumes as described and computes the minimal number of paraboloids $K^*$.

\begin{algorithm}
\caption{Number of Paraboloids -- Binary Search}
\label{alg:bin-search}

\begin{algorithmic}[1]
	\setcounter{ALC@unique}{0}
	\REQUIRE an upper bound of paraboloids $\overline{K}$
	\ENSURE the minimal number of paraboloids $K^*$ satisfying~\eqref{eq:condition-1-ndim} and~\eqref{eq:condition-2-ndim}
	\STATE Set $\underline{K} \gets 1$
	\WHILE{$\overline{K} > \underline{K}$}
		\STATE Set $K \gets \lfloor (\overline{K}+ \underline{K})/2 \rfloor$
		\STATE Solve~\eqref{problem:original-multi-dim} for $K$ $\to$ ``infeasible'' or objective $c^K$
		\IF{``infeasible'' \OR $c^K > 0$} \label{algl:bin-search-assertion}
		\STATE Set $\underline{K}\gets K + 1$
		\ELSE
		\STATE Set $\overline{K} \gets K$
		\ENDIF		
	\ENDWHILE
	\RETURN $K^* \gets \overline{K}$
\end{algorithmic}
\end{algorithm}

We note that the minimality of $K^*$ is considered with respect to the \ac{mip} formulation~\labelcref{problem:original-multi-dim}.
Indeed, there might exist an approximation with fewer paraboloids that satisfies the conditions~\eqref{eq:condition-1-ndim} and~\eqref{eq:condition-2-ndim}, 
but is infeasible for~\eqref{problem:original-multi-dim}.
For instance, two of the paraboloids may not overlap at a predefined discretization point.
In addition, dependent on the particular form of $f$, an analytical solution may be computable by means of analytical and numerical methods.

It is also noteworthy that the size of problem~\labelcref{problem:original-multi-dim} can increase to a non-negligible extend, especially when dealing with a large Lipschitz constant or a small accuracy. 
This effect is catalyzed when considering a large number of paraboloids $K$, as the entire problem size grows linearly in $K$.
To deal with a potentially large problem size, we propose a second approach. 
It relaxes the original problem~\labelcref{problem:original-multi-dim} by omitting constraints~\labelcref{eq:below-approx-multi-dim-3} 
and determines the discretization widths $\Delta t_i$ and $\Delta d_i$ adaptively, not considering the assumptions~\labelcref{eq:choice-delta-ti,eq:choice-delta-di}.
These measures decrease the problem size,
but a solution may lack the desired conditions~\eqref{eq:condition-1-ndim} and~\eqref{eq:condition-2-ndim}.
Hence, we incorporate checks by solving small \ac{nlp} problems $\min_\vx (f(\vx) - \eps - \max_l p^l(\vx))$ and $\min_\vx (\max_l p^l(\vx) - f(\vx))$ on the domain $\domain$ for a given $(p^l)_l$.
This is achieved by remodeling to $K$ inequalities or by solving $K$ simpler problems, respectively.
If the objectives are non-positive, the solution fulfills the desired property and we terminate. 
Otherwise, the number of paraboloids or the number of discretization points is increased and another loop is executed. 
We summarize this implementation in~\Cref{alg:practice-para}.

\begin{algorithm}
	\caption{Number of Paraboloids -- Practical Implementation}
	\label{alg:practice-para}
	\begin{algorithmic}[1]
		\setcounter{ALC@unique}{0}
		\REQUIRE an upper bound of paraboloids $\overline{K}$, start values of discretization points $T_0,\, D_0\in \N$ and paraboloids $K_0 < \overline{K}$ 
		\ENSURE a small number of paraboloids $K^*$ satisfying~\eqref{eq:condition-1-ndim} and~\eqref{eq:condition-2-ndim}
		\STATE Set $T \gets T_0$, $D \gets D_0$, $K \gets K_0$
		\WHILE{$K < \overline{K}$}
		\STATE Set $T$-bound $\gets$ \FALSE, $D$-bound $\gets$  \FALSE
		\STATE Compute $\Delta t_i$, $\Delta d_i$ according to $T, \, D$ for $i \in [n]$
		\STATE Solve~\eqref{problem:original-multi-dim} without~\eqref{eq:below-approx-multi-dim-3} $\to$ ``infeasible'' or solution $(p^l)_{l \in [K]}$ with objective $c^K$
		\IF{``infeasible'' \OR $c^K > 0$}
		\STATE Increment $K$
		\ELSE
		\STATE Check~\eqref{eq:condition-2-ndim} by computing $c^l \define \min\limits_{\vx \in \domain}\{p^l(\vx) - f(\vx) \}$ for $l \in [K]$ \label{algl:prac-check1}
		\IF{$\exists l \in [K]: c^l > 0$}
		\STATE Increase $T$
		\ELSE
		\STATE Set $T$-bound $\gets$  \TRUE
		\ENDIF
		\STATE Shift each $p^l$ by $-c^l$, $l \in [K]$
		\STATE Check~\eqref{eq:condition-1-ndim} by computing $c_\eps \define \min\limits_{\vx \in \domain, y} \{y - (f(\vx) - \eps) \text{ \st \ } y \geq p^l(\vx), \, l \in [K]\}$ \label{algl:prac-check2}
		\IF{$c_\eps < 0$}
		\STATE Increase $D$
		\ELSE
		\STATE Set $D$-bound $\gets$ \TRUE
		\ENDIF 
		\ENDIF
		\IF{$T$-bound \AND $D$-bound}
		\STATE Exit loop
		\ENDIF
		
		\ENDWHILE
		\RETURN $K^* \gets K$
	\end{algorithmic}
	
\end{algorithm}

\subsection{Parabolic Relaxation}
\label{subsec:para-relax}

Turning to the original \ac{minlp} problem~\labelcref{prob:minlp}, we now assume to have a respective approximation $(p^l_j)_{l \in [K^j]}$ for each constraint function~$f_j$.
Then, we replace each $f_j$ in~\eqref{prob:minlp} leading to
\begin{align*}
	\min \ & c(\vx) \\
	\st \ & \max_{l \in [K^j]} p^l_j(\vx)  \leq y_j, & j \in J,, \\
	& \vx \in \Omega.
\end{align*}
Since the maximum over all paraboloids is involved in a ``$\leq$''-constraint, we equivalently reformulate this problem to
\begin{subequations}
	\label{prob:direct-approach-reformulated}
	\begin{align}
		\min \ & c(\vx) \\
		\st \ & p^l_j(\vx)  \leq y_j, & j \in J, l \in [K^j], \\
		& \vx \in \Omega.
	\end{align}
\end{subequations}
To the best of our knowledge, the only similar approach to this is performed in~\cite{Mangasarian2006}, where the authors use the minimum of paraboloids to compute a non-convex underestimator of a non-convex objective. 

Since, without loss of generality, $c(\vx)$ can be assumed to be linear, this procedure creates a relaxation of the original \ac{minlpm} to an \ac{miqcpm}.
In the realm of \ac{minlp}, exact solutions are typically out of scope and one is satisfied with an approximate solution with respect to some $\eps > 0$. 
If a small enough approximation guarantee is considered when computing the approximations, solving the resulting \ac{miqcp} problem~\labelcref{prob:direct-approach-reformulated} returns a satisfactory solution for the original problem. 
This allows solvers tailored for \ac{miqcp} such as GloMIQO~\cite{Misener2013} to be applied to an even broader problem class.

When solving such MIQCP problems, we emphasize that state-of-the-art solvers typically lift the problem to a higher dimension, in which quadratic functions are reduced to linear terms and simple squares, \eg, $\tilde{x} = x^2$. 
After the introduction of the first paraboloid $p^1$ each additional one $p^l$ can therefore be considered as an additional linear constraint in the extended space.
This allows to control the complexity of our approach, since tighter paraboloid relaxations can be realized by adding linear constraints.  
In the light of this, we propose to keep the general nonlinear constraints and to incorporate the parabolic constraints in addition.

Note that the representation of an \ac{minlpm} like~\labelcref{prob:minlp} is explicitly tailored for either strategy, 
but does not constitute a major limitation. 
For instance, assume that the right-hand side $y_j$ is substituted by zero, resulting in a more general constraint ${f_j(\vx) \leq 0}$.
Then, two additional steps must be performed to apply the parabolic relaxation: 
First, one also requires an approximation of the hypograph of $f_j$, which can be obtained by executing the procedures in~\Cref{subsec:para-approx-algo} for $-f_j$. 
Second, one introduces an auxiliary variables $z_j$, which serves as a replacement for $f_j$, leaving the original constraint $z_j \leq 0$, and bounds $z_j$ from above and below by the computed approximations.

Now, going one step further, notice that most practical \ac{minlp} problems are \emph{factorable}. 
This allows to represent each constraint function in terms of an \emph{expression tree}, see, for instance,~\cite{Schichl2005}.
That is, each constraint function comprises of variables, constants, and nested elementary operations (such as '+', '$\cdot$' or one-dimensional univariate functions). 
This can be depicted as an acyclic graph where every node is such a variable, constant, or operation, and this graph is called expression tree.
By introduction of auxiliary variables a factorable \ac{minlp} problem can be transformed to only contain one-dimensional functions, as well as linear and bivariate terms. 
For an explanatory introduction of this procedure, see~\cite{Morsi2013}.

This representation is explicitly beneficial for the parabolic relaxation procedure because typically there already exists structures like $f_j(\vx) = y_j$, where bounding $y_j$ by paraboloids and relaxing this constraint is already sufficient.
Hence, when referring to either strategy (replacement and addition) on a factorable \ac{minlpm}, it is always assumed to apply this on suitable function components.

We like to highlight one potential pitfall for this procedure. 
The approximation guarantee for a one-dimensional component of an original constraint $f_j$ might not propagate directly.
The latter can worsen the overall accuracy and requires thorough analysis of the expression tree's depth.
We think of these aspects as accountable if respected.

\subsection{Lookup Tables}
\label{subsec:lookup-table}

To review our claim about factorability, we examine instances from the MINLPLib~\cite{MINLPLib}, a test set of various \ac{minlp} problems. 
Note that the majority of those instances is available in the OSIL-format, which is XML-based and mimics the expression tree.
Thus, these instances are factorable.

This format also facilitates the analysis of the components of an instance.
In \Cref{fig:1-d-occurence}, we display the number of occurrences of one dimensional functions for all such instances.
Hereby, occurrences of `power' are not distinguished whether the exponent or the base is a variable or a constant. 
Further, the square function is excluded, since we argue to not approximate but keep those terms.

\begin{figure}
\caption{Cumulative occurrences of one dimensional function types in MINLPLib instances in OSIL-format}
\label{fig:1-d-occurence}

\begin{tikzpicture}[scale=0.9]
	\begin{axis}[
		ybar,
		xtick=data,
		xlabel={Function},
		xlabel style={yshift=-10pt,xshift=-10pt},
		ylabel={Occurrences},
		ylabel style={yshift=-5pt},
		nodes near coords,
		nodes near coords align={vertical},
		bar width=12pt,
		width=14cm,
		height=8cm,
		enlarge x limits=0.05,
		ymin=0,
		ymax=92000,
		legend style={at={(0.5,-0.2)},anchor=north},
		x tick label style={rotate=45, anchor=east, align=center},
		xtick pos=bottom,
		ytick=\empty,
		symbolic x coords = {exp, cos, sin, sqrt, ln, erf, abs, power, signpower, log10, tanh, gammaFn, min}
		]
		\addplot[fill=gray!50] coordinates {(exp, 83854)(cos, 73034)(sin, 66917)(sqrt, 58583)(ln, 36504)(erf, 23165)(abs, 17611)(power, 11223)(signpower, 2426)(log10, 523)(tanh, 339)(gammaFn, 6)(min, 1)};
	\end{axis}
\end{tikzpicture}

\end{figure}
 
Observe that the exponential, the cosine, the sine, and the square root function occur very frequently.
This allows to use a parabolic approximation to these functions repeatedly when tackling an entire subset of these problems.
Naturally, the domain of their arguments varies dependent on the particular instance.
However, an approximation for a wide domain is always valid for a subdomain.
Therefore, we advocate to approximate these one dimensional functions with respect to some generic (\eg, the most common) domains and different approximation guarantees a-priori.
The result serves as a lookup table for the parabolic relaxation.

The creation and usage of such a table can be interpreted as the common paradigm in the field of Machine Learning (ML).
Once trained under potentially high computational effort, the ML approach is reused without significant cost in every similar task. 
This applies for the lookup table and tackling a class of similar optimization problems with parabolic approximations.

At this point, it is important to note that a concatenation of approximations, \ie, sets of paraboloids, of two overlapping domains might not result in a globally valid one-sided approximation for the united domain.
For instance, since the paraboloids are computed with respect to the original domain, one paraboloid can even intersect the function to approximate at a bound
and leads to a violation of property~\eqref{eq:condition-2-ndim} right outside the domain.

\begin{remark}
	\label{re:unbounded-approx}
	For the special case of the sine function, we highlight a potential enhancement of the procedure, which allows for concatenation. 
	At first, one can find a parabolic approximation to the sine function from $-\pi/2$ to $3\pi/2$ and hereby restrict the sign of the quadratic coefficients to be non-positive.
	Then, we can observe that the approximation fulfills~\eqref{eq:condition-2-ndim} on $\R$ entirely, as the paraboloids intersect somewhere below -1 at the bounds and decrease outside of the domain.
	At a second step, an approximation for an arbitrary (finite) domain can be computed by simply shifting the paraboloids appropriately and concatenating the solutions due to the periodicity of the sine.
	We even like to mention an approach by integer programming for unbounded domains for sine functions, where the integer variable keeps track of the period as an offset with respect to $[-\pi/2, 3\pi/2]$ and shifts the paraboloids accordingly.
	Naturally, both ideas can be applied analogously for the cosine function.
\end{remark}

\section{Computational Results}
\label{sec:computations}

We investigate the extend to which parabolic approximation and relaxation is sufficient for tackling \ac{minlp} problems in practice.
To this end, we have implemented the \ac{mip}-based approaches for one-sided parabolic approximations discussed in~\Cref{subsec:para-approx-algo} and analyze their ability to approximate functions (such as frequently occurring ones, see~\Cref{fig:1-d-occurence}) on different generic domains with varying accuracies.
These results are used as a lookup table (compare~\Cref{subsec:lookup-table}) to apply the parabolic relaxation in two variants described in~\Cref{subsec:para-relax} to MINLPLib~\cite{MINLPLib} instances.

All computational experiments are conducted on single nodes with Intel Xeon Gold 6326 ``Ice Lake'' multicore processors with 2.9GHz per core.
The accessible RAM is set to \SI{32}{\giga\byte} for the parabolic approximation and \SI{64}{\giga\byte} for the parabolic relaxation with four and eight cores, respectively.
All implementations are based on \software{Python 3.11.7}.
The optimization problems are modeled via \software{Pyomo}~\cite{Hart2011pyomo,Bynum2021pyomo}.
For the parabolic approximation, we solve the \ac{mip} models with \software{Gurobi~11.0.3}~\cite{Gurobi11} as a direct solver within \software{Pyomo} and leverage \software{SCIP~8.1} as part of \software{GAMS} in version 46.4.0 for the checks. 
For the parabolic relaxation, we solve the resulting problems with \software{Gurobi~11.0.1} and \software{SCIP~8.1}, both from inside the \software{GAMS} framework.
If not stated otherwise, default settings are used.
All code is available at \url{https://github.com/adriangoess/paraboloids.git} under a MIT license.
The performance profiles for evaluation in~\Cref{subsec:comp-para-relax} are created with the software~\cite{Siqueira2016perprof}.

\subsection{Parabolic Approximation}
\label{subsec:comp-para-approx}

To demonstrate the ability of our methods to approximate Lipschitz functions, we randomly generate a zigzag function as the first element for a test set. 
It is constructed by sampling uniformly and we refer to~\Cref{subsec:zigzag-def} for details and the bold drawing in~\Cref{fig:zigzag-approx-below} for an intuition.
Further, since we aim to re-use the approximations computed here, we refer to~\Cref{fig:1-d-occurence} and claim that an appropriate test set of functions is supposed to contain the exponential and the sine function. 
Approximations for $\ln$ are assumed to require similar effort and for $\cos$ they can be derived by scaling and translation.
In order to additionally investigate the possibility of approximating higher order polynomials by parabolic (\ie, quadratic) terms, we include $x^3$ into the test set.

Since we interpret the outcome discussed in this section as a lookup table, we take generic domains for each function into account. 
For zigzag, $\exp$, and $x^3$, we thus consider an interval of $[-5, 5]$ and several subintervals. 
For $\sin$, we take the two periods $[-\pi/2, 3\pi/2]$ and $[0, 2\pi]$, as well as half of those into account, see~\Cref{tab:approx-domains} for an overview.
Each function is approximated from above and below in order to highlight potential differences in the necessary number of paraboloids.
For the approximation accuracy, we consider $\eps \in \{10^0, 10^{-1}, 10^{-2}, 10^{-3}\}$.

\begin{table}
	\centering
	\begin{tabular}{lcccccc}
		\toprule
		& \multicolumn{6}{c}{domain $\domain$} \\ \cline{2-7}
		\\[-1em]
		zigzag & $[-5, -1]$ & $[-2,\, 2]$ & $[\,\phantom{-}1, \ \ 5]$ & $[-5, 0]$ & $[0, -5]$ & $[-5, \phantom{\pi}5]$ \\
		$\exp$ &$[-5, -2]$ & $[-2,\, 2]$ & $[\, -5,\ \ 2]$& $[\phantom{-}2, 5]$& $[-2, 5]$ & $[-5, \phantom{\pi}5]$  \\ 
		$x^3$ & $[-2, \phantom{-}2]$ & $[-5, \, 0]$ & $[\, \phantom{-}0,\ \ 5]$ & $[-5, 2]$ & $[-2, 5]$ & $[-5,\phantom{\pi} 5]$ \\ 
		$\sin$  & $[-\frac{\pi}{2}, \ \frac{\pi}{2}]$ & $[\,\frac{\pi}{2}, \frac{3\pi}{2}]$ &$[-\frac{\pi}{2}, \frac{3\pi}{2}]$ & $[\phantom{-}0, \pi]$ & $[\pi,\, 2\pi]$ & $[\phantom{-}0, 2\pi]$ \\
		\bottomrule
	\end{tabular}
	\caption{Functions and domains to approximate by paraboloids.}
	\label{tab:approx-domains}
\end{table}

For the zigzag function, we bound the interval for a uniform sample such that its Lipschitz constant is upper bounded by 1. 
This can always be enforced by scaling. 
Note that the Lipschitz constants for $\exp$ and $x^3$ can be computed as the maximal absolute derivative at the bounds. 
For instance, on $[-5, 2]$, this is $\exp(2)$ and $2(-5)^2$. 
For $\sin$, we just consider the Lipschitz constant of 1, as this is the globally largest absolute derivative.

Recall that two methods for parabolic approximation have been presented in~\Cref{subsec:para-approx-algo}.
One (\texttt{exact-MIP}) considers problem~\labelcref{problem:original-multi-dim} unchanged and performs a binary search on the number of paraboloids, compare~\Cref{alg:bin-search}.
The other (\texttt{practical-MIP}) relaxes problem~\labelcref{problem:original-multi-dim} to facilitate the solving process, but incorporates manual checks of conditions~\labelcref{eq:condition-1-ndim} and~\labelcref{eq:condition-2-ndim}, see~\Cref{alg:practice-para} steps~\labelcref{algl:prac-check1,algl:prac-check2}.
These checks involve solving very small \ac{nlp} problems, which we perform via \software{SCIP}.

Both methods require the specification of parameters to control the problem size.
The method \texttt{exact-MIP} requires the definition of $\delta \in (0, \eps)$ and $\nu \in (0, \delta/\eps)$, which we set to $\eps/2$ and $\delta/(2\eps)$, respectively.
Instead, for \texttt{practical-MIP}, the initial numbers of discretization points $T_0$ and $D_0$ need specification.
Informally, the greater the Lipschitz constant, the wider the domain, or the smaller the accuracy, the more discretization points are required to satisfy the conditions. 
Translating these relations in a fractional term gives $T_0, \, D_0 \approx L\abs{\domain}/\eps$.
To keep the initial solution times low, we additionally divide by ten, resulting in $T_0 = D_0 = \lceil L\abs{\domain} / (10\eps) \rceil$.
For more details, we refer to the repository.

Based on these parameters, each method poses \ac{mip} problems which are solved using \software{Gurobi}.
To maintain memory for the modeling, we set a limit of \SI{24}{\giga\byte} from the available \SI{32}{\giga\byte} for the solving process.
In addition, the usable threads are limited to four and the solving time to \SI{3600}{\second}.
Together with an iteration limit of 24, each run is theoretically bounded in time by \SI{24}{\hour}, but typically falls short of it.
We recall that when determining an initial feasible upper bound of paraboloids, the search number is doubled in every unsuccessful iteration. 
This can lead to a problem size that causes memory-based failures already in the modeling stage. 
To avoid such, we terminate whenever a problem shows more than 1,000 binary and 200,000 continuous variables and proceed with another iteration of the search respecting a smaller number.

Diving into the results, we start with the approximations of the zigzag function from below, see~\Cref{tab:number-paraboloids-zigzag-below}.
Although having the theoretical guarantees as described in~\Cref{sec:approx}, the size of the corresponding \ac{mip} problems in \texttt{exact-MIP} for decreasing accuracy appears too large to be computationally tractable. 
Assuming the number of paraboloids to be fixed, we note that when decreasing the accuracy by a factor of 10, in problem~\labelcref{problem:original-multi-dim} the number of constraints grows by a factor of approximately 100 and the number of variables by 10. 
However, decreasing the accuracy by a certain factor intuitively results in a number of paraboloids increasing by a similar order of magnitude, which in turn enlarges the resulting problem even further. 
The adaption made for \texttt{practical-MIP} seems to cope with this increase to some extent, as it -- in contrast -- returns a result for the largest domain $[-5, 5]$ and $\eps=10^{-1}$ or for $[-5, -1]$ and $\eps = 10^{-2}$.
In~\Cref{fig:zigzag-approx-below} we visualize the former result.

Interestingly, the number of paraboloids for the approximation is smaller with \texttt{practical-MIP}.
This shows that the combination of constraints in problem~\labelcref{problem:original-multi-dim} represents a sufficient condition for a global one-sided approximation, but not a necessary one.
Since the results for the approximation from above are quite similar, we skip their explicit interpretation and refer to~\Cref{tab:number-paraboloids-zigzag-above} in~\Cref{subsec:approx-zigzag}.

\begin{table}[h]
	\centering
	\begin{tabular}{ccccccccccc}
		\toprule
		& & \multicolumn{9}{c}{$\eps$} \\ \cline{3-11}
		& & \multicolumn{9}{c}{\vspace{-0.28cm}} \\
		$\domain$ & & $10^{0}$ & $10^{-1}$ & $10^{-2}$ & $10^{-3}$  & & $10^{0}$ & $10^{-1}$ & $10^{-2}$ & $10^{-3}$ \\ \cline{3-6}\cline{8-11}
		\\[-1em] 
		$[-5, -1]$ &  & 2 & 5 & - & - &  & 1 & 4 & 41 & - \\ 
		\\[-1em] 
		$[-2, \phantom{-}2]$ &  & 2 & 9 & - & - &  & 1 & 4 & - & - \\ 
		\\[-1em] 
		$[\phantom{-}1, \phantom{-}5]$ &  & 2 & 7 & - & - &  & 1 & 4 & - & - \\ 
		\\[-1em] 
		$[-5, \phantom{-}0]$ &  & 2 & 8 & - & - &  & 1 & 5 & - & - \\ 
		\\[-1em] 
		$[\phantom{-}0, \phantom{-}5]$ &  & 2 & 8 & - & - &  & 1 & 4 & - & - \\ 
		\\[-1em] 
		$[-5, \phantom{-}5]$ &  & 4 & - & - & - &  & 2 & 9 & - & - \\ 
		& & \multicolumn{4}{c}{\texttt{exact-MIP}} & & \multicolumn{4}{c}{\texttt{practical-MIP}} \\
		\bottomrule
	\end{tabular}
	\caption{Number of paraboloids to approximate the zigzag function from below.}
	\label{tab:number-paraboloids-zigzag-below}
\end{table}

\begin{figure}
\caption{Approximation of the zigzag function from below by nine paraboloids with $\eps = 10^{-1}$.}
\label{fig:zigzag-approx-below}
\centering 
\begin{tikzpicture}
	\begin{axis}[
		axis lines=middle,
		xlabel={$x$},
		ylabel={$f(x)$},
		grid=major,
		width=12cm,
		height=8cm,
		xtick={-5,-4,-3,-2,-1,0,1,2,3,4,5},
		ytick={-1.5,-1,-0.5,0,0.5,1},
		clip mode=individual,
		ymin=-1.8, ymax=0.6
		]
		\addplot[color=black, line width=2pt,mark size=0pt] coordinates {
			(-5,-0.12801019571599248)
			(-4.964333030490388,-0.12446757554744907)
			(-4.523363861798294,-0.1946982637766576)
			(-4.186332389004458,-0.3937836755004518)
			(-3.563254132317301,-0.6434453088250474)
			(-3.289095129965463,-0.5770254473388788)
			(-2.7552444566311944,-0.9671849317937837)
			(-2.2368021165781053,-1.2943844007567447)
			(-1.4493203202395986,-0.7368862021423581)
			(-0.9500258512314901,-0.3908137365219214)
			(-0.8611768289925197,-0.38988151656613956)
			(-0.7865431896495103,-0.4006105011239624)
			(-0.6809775831455038,-0.4793286685566136)
			(-0.080199727256694,-0.808540514189117)
			(0.035676500210189144,-0.8733602373302513)
			(0.3920045223634554,-0.8963166811449923)
			(0.601730316365771,-0.8374228572870663)
			(1.0899694535620053,-0.832309313959877)
			(1.482993178169379,-0.6014963419021636)
			(2.0671973152584076,-0.9960694536135861)
			(2.770942138399492,-0.34221861821876076)
			(3.2759504159580066,0.05120309491708941)
			(3.6241479321024053,0.09796193184439114)
			(4.0574184357655545,0.043150841431488146)
			(4.8362120289126835,0.09860744585003525)
			(5,0.11308897556235686)
		};
		\foreach \qcoeff/\lcoeff/\constcoeff in {
			-0.55727/-5.44384/-13.443479441201276,
			-1.46451/-2.57902/-1.5600607083755302,
			-0.92229/-6.34842/-11.55436687072635,
			-0.04825/0.06235/-0.9187732533645758,
			-0.39455/-0.62117/-0.8558836424288132,
			-0.49335/3.66264/-6.7191414002703045,
			-0.12555/1.24672/-3.0367732803589864,
			-0.13068/-1.2964/-3.603633754061471,
			-1.31648/3.83179/-3.47147977351185
		} {
			\addplot[domain=-5:5, smooth, variable=\x, draw=none, fill=gray!30, fill opacity=1.0, forget plot]
			({\x},{\qcoeff*\x*\x + \lcoeff*\x + \constcoeff}) -- (axis cs:5,-1.8) -- (axis cs:-5,-1.8) -- cycle;
			
			\addplot[domain=-5:5, dashed, variable=\x, gray]
			({\x}, {\qcoeff*\x*\x + \lcoeff*\x + \constcoeff});
		}
	\end{axis}
\end{tikzpicture}

\end{figure}

\Cref{tab:number-paraboloids-exp-above} provides a summary for the results for $\exp$ approximated from above.  
Comparing the successful runs for either method, \texttt{practical-MIP} returns an approximation more frequently than \texttt{exact-MIP} and computes approximations relying on less paraboloids,
which has already been observed for the zigzag function.
However, either method struggles with the combination of a high accuracy and a large Lipschitz constant, both of which yield large \ac{mip} problems to solve. 
As noted above, the Lipschitz constant is computed as $\exp(\overline{x})$, where $\overline{x}$ is the right bound of the domain.
If $\overline{x} = 5$, the increase in the problem size causes the \ac{mip} problems to be intractable.
This is also true for the approximations from below, see~\Cref{tab:number-paraboloids-exp-below}, except for cases with low accuracy on the smallest domain, which we account to the convexity of the epigraph of $\exp$ there.
As the described effects are similar for approximating $x^3$, we avoid further detailed discussion at this point and refer to~\Cref{tab:number-paraboloids-x3-above} and~\Cref{tab:number-paraboloids-x3-below} in~\Cref{subsec:approx-x3}.

\begin{table}[t]
	\centering
	\begin{tabular}{ccccccccccc}
		\toprule
		& & \multicolumn{9}{c}{$\eps$} \\ \cline{3-11}
		& & \multicolumn{9}{c}{\vspace{-0.28cm}} \\
		$\domain$ & & $10^{0}$ & $10^{-1}$ & $10^{-2}$ & $10^{-3}$  & & $10^{0}$ & $10^{-1}$ & $10^{-2}$ & $10^{-3}$ \\ \cline{3-6}\cline{8-11}
		\\[-1em] 
		$[-5, -2]$ &  & 1 & 1 & 3 & - &  & 1 & 1 & 2 & 5 \\ 
		\\[-1em] 
		$[-2, \phantom{-}2]$ &  & 3 & - & - & - &  & 2 & 5 & 32 & - \\ 
		\\[-1em] 
		$[-5, \phantom{-}2]$ &  & 4 & - & - & - &  & 3 & 8 & 81 & - \\ 
		\\[-1em] 
		$[\phantom{-}2, \phantom{-}5]$ &  & - & - & - & - &  & - & - & - & - \\ 
		\\[-1em] 
		$[-2, \phantom{-}5]$ &  & - & - & - & - &  & - & - & - & - \\ 
		\\[-1em] 
		$[-5, \phantom{-}5]$ &  & - & - & - & - &  & - & - & - & - \\ 
		& & \multicolumn{4}{c}{\texttt{exact-MIP}} & & \multicolumn{4}{c}{\texttt{practical-MIP}} \\
		\bottomrule
	\end{tabular}
	\caption{Number of paraboloids to approximate $\exp$ from above.}
	\label{tab:number-paraboloids-exp-above}
\end{table}
\begin{table}[t]
	\centering
	\begin{tabular}{ccccccccccc}
		\toprule
		& & \multicolumn{9}{c}{$\eps$} \\ \cline{3-11}
		& & \multicolumn{9}{c}{\vspace{-0.28cm}} \\
		$\domain$ & & $10^{0}$ & $10^{-1}$ & $10^{-2}$ & $10^{-3}$  & & $10^{0}$ & $10^{-1}$ & $10^{-2}$ & $10^{-3}$ \\ \cline{3-6}\cline{8-11}
		\\[-1em] 
		$[-5, -2]$ &  & 1 & 1 & 3 & - &  & 1 & 1 & 2 & 8 \\ 
		\\[-1em] 
		$[-2, \phantom{-}2]$ &  & 3 & - & - & - &  & 2 & 4 & 20 & - \\ 
		\\[-1em] 
		$[-5, \phantom{-}2]$ &  & 3 & - & - & - &  & 2 & 6 & 56 & - \\ 
		\\[-1em] 
		$[\phantom{-}2, \phantom{-}5]$ &  & - & - & - & - &  & 5 & 20 & - & - \\ 
		\\[-1em] 
		$[-2, \phantom{-}5]$ &  & - & - & - & - &  & - & - & - & - \\ 
		\\[-1em] 
		$[-5, \phantom{-}5]$ &  & - & - & - & - &  & - & - & - & - \\ 
		& & \multicolumn{4}{c}{\texttt{exact-MIP}} & & \multicolumn{4}{c}{\texttt{practical-MIP}} \\
		\bottomrule
	\end{tabular}
	\caption{Number of paraboloids to approximate $\exp$ from below.}
	\label{tab:number-paraboloids-exp-below}
\end{table}

At first sight, these results seem to show tremendous limitations for parabolic approximations. 
But, one must recognize that the constructed \ac{mip} models basically rely on a Lipschitz property only and not on other properties like differentiability.
This provides freedom to approximate general functions such as the zigzagging one, for instance, but is paid for with increased computational effort and thus limited success on specific functions.

Turning to the results for $\sin$ in~\Cref{tab:number-paraboloids-sin-above} and~\Cref{tab:number-paraboloids-sin-below}, we remark a larger number of successful runs over all. 
The problem size seems to be controllable up to $\eps = 10^{-1}$ for \texttt{exact-MIP} and $\eps = 10^{-2}$ for \texttt{practical-MIP}.
The latter approach is even successful for higher accuracy in some cases. 
In comparison to the zigzag function, for which the same Lipschitz constant of 1 is used, the methods are able to compute approximations more frequently.
Beside the slightly smaller domain size, we assume that the regularity of $\sin$ facilitates finding adequate paraboloids, see~\Cref{fig:approx-sin-below-e-2} for a visualization of this intuition.

\begin{figure}
	\caption{Approximation  with $\eps = 10^{-2}$ of $\sin$ on $[-\pi/2, 3\pi/2]$ from below by 13 paraboloids.}
	\label{fig:approx-sin-below-e-2}
	\centering
	\begin{tikzpicture}[scale=1.8]
		% Define clipping area
		\clip (-1.57,-1.2) rectangle (4.71,1.4);
		\ % Draw x-axis
		\draw[->] (-1.57,0) -- (4.71,0) node[right] {$x$};
		% Draw y-axis
		\draw[->] (0,-1.2) -- (0,1.4) node[above] {$y$};
		% Draw grid lines
		\foreach \x in {-1.5,-1.0,...,4.5}
		\draw (\x,-0.05) -- (\x,0.05);
		\foreach \y in {-1.0,-0.5,...,1.0}
		\draw (-0.05,\y) -- (0.05,\y);
		% Label x-axis
		\foreach \x/\label in {-1.5,-1.0,-0.5,0.5,1.0,1.5,2.0,2.5,3.0,3.5,4.0,4.5}
		\draw (\x,0) node[below] {$\label$};
		% Label y-axis
		\foreach \y in {-1.0,-0.5,0.5,1.0}
		\draw (0,\y) node[left] {$\y$};
		% Draw sine function
		\draw[domain=-1.57:4.71,smooth,variable=\x,black, line width=2pt] plot ({\x},{sin(deg(\x))});
		% Define parameters for parabolic functions
		\def\quadraticcoeffs{{-0.0186,-0.21609,-0.08009,-0.16669,-0.05161,-0.46946,-0.35723,-0.45377,-0.29585,-0.27251,-0.11547,-0.14214,-0.40536,}}
		\def\linearcoeffs{{0.05843,0.67886,0.25162,0.52368,0.16136,1.47624,1.12229,1.42556,0.92943,0.85611,0.36276,0.44654,1.27349,}}
		\def\constantcoeffs{{-0.868915,-0.08614,-0.517885,-0.196032,-0.66694,-0.167049,-0.011553,-0.132701,-0.003962,-0.016697,-0.364584,-0.269412,-0.056143,}}
		% Draw parabolic functions
		\foreach \i in {0,...,12}{
			\draw[domain=-1.57:4.71,smooth,variable=\x,gray] plot ({\x},{\quadraticcoeffs[\i]*\x*\x + \linearcoeffs[\i]*\x + \constantcoeffs[\i]});
		}
	\end{tikzpicture}
\end{figure}

\begin{table}[h]
	\centering
	\begin{tabular}{ccccccccccc}
		\toprule
		& & \multicolumn{9}{c}{$\eps$} \\ \cline{3-11}
		& & \multicolumn{9}{c}{\vspace{-0.28cm}} \\
		$\domain$ & & $10^{0}$ & $10^{-1}$ & $10^{-2}$ & $10^{-3}$  & & $10^{0}$ & $10^{-1}$ & $10^{-2}$ & $10^{-3}$ \\ \cline{3-6}\cline{8-11}
		\\[-1em] 
		$[-\pi/2, \phantom{3}\pi/2]$ &  & 2 & 4 & - & - &  & 1 & 3 & 9 & - \\ 
		\\[-1em] 
		$[\phantom{-}\pi/2, 3\pi/2]$ &  & 2 & 4 & - & - &  & 1 & 3 & 8 & - \\ 
		\\[-1em] 
		$[-\pi/2, 3\pi/2]$ &  & 3 & 9 & - & - &  & 1 & 5 & 47 & - \\ 
		\\[-1em] 
		$[0, \phantom{2}\pi]$ &  & 1 & 3 & - & - &  & 1 & 1 & 5 & 40 \\ 
		\\[-1em] 
		$[\pi, 2\pi]$ &  & 1 & 2 & - & - &  & 1 & 1 & 3 & 32 \\ 
		\\[-1em] 
		$[0, 2\pi]$ &  & 3 & 7 & - & - &  & 2 & 5 & 24 & - \\ 
		& & \multicolumn{4}{c}{\texttt{exact-MIP}} & & \multicolumn{4}{c}{\texttt{practical-MIP}} \\
		\bottomrule
	\end{tabular}
	\caption{Number of paraboloids to approximate $\sin$ from above.}
	\label{tab:number-paraboloids-sin-above}
\end{table}
\begin{table}[h]
	\centering
	\begin{tabular}{ccccccccccc}
		\toprule
		& & \multicolumn{9}{c}{$\eps$} \\ \cline{3-11}
		& & \multicolumn{9}{c}{\vspace{-0.28cm}} \\
		$\domain$ & & $10^{0}$ & $10^{-1}$ & $10^{-2}$ & $10^{-3}$  & & $10^{0}$ & $10^{-1}$ & $10^{-2}$ & $10^{-3}$ \\ \cline{3-6}\cline{8-11}
		\\[-1em] 
		$[-\pi/2, \phantom{3}\pi/2]$ &  & 2 & 4 & - & - &  & 1 & 3 & 8 & - \\ 
		\\[-1em] 
		$[\phantom{-}\pi/2, 3\pi/2]$ &  & 2 & 4 & - & - &  & 1 & 3 & 8 & - \\ 
		\\[-1em] 
		$[-\pi/2, 3\pi/2]$ &  & 2 & 5 & - & - &  & 1 & 3 & 13 & - \\ 
		\\[-1em] 
		$[0, \phantom{2}\pi]$ &  & 1 & 2 & - & - &  & 1 & 1 & 3 & 16 \\ 
		\\[-1em] 
		$[\pi, 2\pi]$ &  & 1 & 3 & - & - &  & 1 & 1 & 5 & - \\ 
		\\[-1em] 
		$[0, 2\pi]$ &  & 3 & 7 & - & - &  & 2 & 5 & 44 & - \\ 
		& & \multicolumn{4}{c}{\texttt{exact-MIP}} & & \multicolumn{4}{c}{\texttt{practical-MIP}} \\
		\bottomrule
	\end{tabular}
	\caption{Number of paraboloids to approximate $\sin$ from below.}
	\label{tab:number-paraboloids-sin-below}
\end{table}

In comparison to approximating $\exp$ or $x^3$, we highlight the moderate number of paraboloids necessary for $\sin$.
This implies a small number of additional constraints when applying the parabolic relaxation to trigonometric functions.
We analyze the resulting effects on a distinct set of instances in the following section. 

\subsection{Applied Parabolic Relaxation}
\label{subsec:comp-para-relax}

To make use of the results above, we perform the parabolic relaxation on feasible instances from the MINLPLib~\cite{MINLPLib} available in the OSIL-format.
Due to this format, we can conveniently rewrite the instances as an expression tree in terms of quadratic parts and univariate functions such that we can identify parts to substitute, see \Cref{subsec:comp-para-approx}. 
Since there exist results for approximation with $\eps = 10^{-2}$ throughout the majority of relevant functions and domains, we fix this accuracy in the current section. 
For every instance, we scan the expression tree formulation and identify a substitutable nonlinear function if its domain is a subset of the ones computed before and the domain width is nontrivial, \ie, $\abs{\domain} > 0$.
We summarize all such instances as \texttt{all}.
As discussed above, the substitution of $\sin$ or $\cos$ requires only a few paraboloids in comparison to $\exp$ or $x^3$. 
Therefore, we define the set \texttt{sub} as a filter on \texttt{all}, where we consider either instances with $\sin$ and $\cos$, where there exist approximations for all nonlinear functions, or with $\exp$ and $x^3$, where there does not exist approximations for all nonlinear functions.

For each instance in the sets \texttt{all} and \texttt{sub}, we consider three types of problems:
First, we take the instance in its original form into account, as given in \software{GAMS}-format on the MINLPLib website (\texttt{orig}).
Second, we relax every possible one dimensional nonlinear function (\texttt{para}).
For this, we follow the procedure described in~\Cref{subsec:para-relax}.
In particular, we reformulate each instance in terms of one-dimensional functions and linear/quadratic parts.
This is complemented by an expression-tree-based bound propagation of the natural bounds of the respective arguments.
Third, we keep the original functions and additionally incorporate the parabolic relaxation, aiming for the solvers to refine their relaxations with the quadratic cuts during the process (\texttt{both}).
For details, we again refer to~\Cref{subsec:para-relax}.

We tackle these problems with \software{Gurobi} and \software{SCIP}.
Running on nodes with \SI{64}{\giga\byte} of RAM, we limit the memory used by the solvers to \SI{60}{\giga\byte}.
Further, we apply a time limit of \SI{4}{\hour} (\SI{14400}{\second}) and bound the usable threads by eight.
Note that we experience numerical issues on one instance for each solver, which we exclude in the respective evaluation.
An overview of all runs can be found in~\Cref{tab:overview} in~\Cref{subsec:overview-minlplib}, where we highlighted the excluded instance with an asterisk at the respective solver entry.

For comparison, we consider the optimality gap as a suitable metric. 
We leverage a definition that it widely used, \eg, by the solver CPLEX, and give it in absolute and relative terms.
Formally, let $c^*$ denote the best known value for~\labelcref{prob:minlp} and $d$ as a given lower/dual bound.
The \emph{absolute optimality gap} is then defined as
\begin{equation*}
	\mathrm{absgap} = \abs{c^* - d},
\end{equation*}
and its relative analog  as
\begin{equation*}
	\mathrm{relgap} = \abs{c^* - d}/(\abs{c^*} + 10^{-10})= \mathrm{absgap}/(\abs{c^*} + 10^{-10}).
\end{equation*}
This definition is independent of the presentation of~\labelcref{prob:minlp} as a maximization or minimization problem.
The addition of $10^{-10}$ avoids a division by zero.
To keep the evaluation results for each solver independent of other solvers, we note that for each instance, $c^*$ is defined as the best value among the one given on the MINLPLib website and the one returned by the current solver.

Since the results from~\Cref{subsec:comp-para-approx} serve as an approximation with accuracy of $\eps = 10^{-2}$, one cannot expect to have an optimality gap of zero on problems of type \texttt{para}. 
However, the method is expected to achieve comparable gaps with the returned dual value on a smaller time scale. 
Running \software{Gurobi} on \texttt{all} with type \texttt{orig} and \texttt{para}, see~\Cref{tab:statistics-gurobi-all}, we can unfortunately not observe such an effect. 
However, when executing \software{SCIP} on the same problems, we can see a significant decrease in the running times in the median, while maintaining comparable gaps, see~\Cref{tab:statistics-scip-all}.
This effect even increases when restricting on \texttt{sub}, which has been declared before to be a more promising set of instances for the parabolic relaxation, see~\Cref{tab:statistics-scip-sub}.

\begin{table}[h]
	\begin{tabular}{lcccccc}
		\toprule
		& & \multicolumn{2}{c}{\texttt{orig}} & & \multicolumn{2}{c}{\texttt{para}} \\ \cline{3-4}\cline{6-7}
		\\[-1em] 
		& & time in s & relgap & & time in s & relgap \\
		No. instances & & 38 & 38 && 38 & 38 \\ \midrule
		Minimum &  & 0.01 & 0.0000 &  & 0.01 & 0.0000\\
		1st Quartile &  & 0.08 & 0.0000 &  & 0.51 & 0.0024\\
		Median &  & 15.97 & 0.0001 &  & 102.07 & 0.0652\\
		3rd Quartile &  & limit & 0.5016 &  & limit & 0.5381\\
		Maximum &  & limit & inf &  & limit & inf\\
		\bottomrule
	\end{tabular}
	\caption{Statistics for \software{Gurobi} on \texttt{all}.}
	\label{tab:statistics-gurobi-all}
\end{table}

\begin{table}[h]
	\begin{tabular}{lcccccc}
		\toprule
		& & \multicolumn{2}{c}{\texttt{orig}} & & \multicolumn{2}{c}{\texttt{para}} \\ \cline{3-4}\cline{6-7}
		\\[-1em] 
		& & time in s & relgap & & time in s & relgap \\
		No. instances & & 38 & 38 && 38 & 38 \\ \midrule
		Minimum &  & 0.01 & 0.0000 &  & 0.01 & 0.0001\\
		1st Quartile &  & 0.49 & 0.0000 &  & 0.57 & 0.0049\\
		Median &  & 1774.99 & 0.0001 &  & 51.10 & 0.0389\\
		3rd Quartile &  & limit & 0.1405 &  & limit & 0.8334\\
		Maximum &  & limit & 6.5758 &  & limit & inf\\
		\bottomrule
	\end{tabular}
	\caption{Statistics for \software{SCIP} on \texttt{all}.}
	\label{tab:statistics-scip-all}
\end{table}

\begin{table}[h]
	\begin{tabular}{lcccccc}
		\toprule
		& & \multicolumn{2}{c}{\texttt{orig}} & & \multicolumn{2}{c}{\texttt{para}} \\ \cline{3-4}\cline{6-7}
		\\[-1em] 
		& & time in s & relgap & & time in s & relgap \\
		No. instances & & 23 & 23 && 23 & 23 \\ \midrule
		Minimum &  & 0.03 & 0.0000 &  & 0.04 & 0.0001\\
		1st Quartile &  & 0.39 & 0.0000 &  & 0.57 & 0.0116\\
		Median &  & 915.51 & 0.0001 &  & 25.70 & 0.0315\\
		3rd Quartile &  & limit & 0.0895 &  & limit & 0.5173\\
		Maximum &  & limit & 1.0000 &  & limit & inf\\
		\bottomrule
	\end{tabular}
	\caption{Statistics for \software{SCIP} on \texttt{sub}.}
	\label{tab:statistics-scip-sub}
\end{table}

This motivates to dive deeper in the latter case. 
After removing all instances, on which \software{SCIP} requires less than \SI{5}{\second} on \texttt{orig} and \texttt{para}, \Cref{tab:overview-filtered} presents an overview of the remaining times and gaps.
In the lower half, instances are gathered, on which the solver hits the time limit and returns comparable gaps for both problem types with one exception (lnts400).
Turning to the instances in the upper half, in all cases except the first, \software{SCIP} on type \texttt{para} requires a fraction of the solution time in comparison to \texttt{orig}.
Except for instance t1000 the relative optimality gaps produced on the former is below $3.9$\%, which we consider quite small. 
Note that for t1000, the relative gap is  high in comparison, but in absolute terms it is below 0.0001. 
This effect is caused by the optimal objective of zero and the corresponding division by a small term, compare the definition of relgap.
Overall, these results are already promising and demonstrate the ability of \software{SCIP} to handle parabolic constraints efficiently.

\begin{table}
	\begin{tabular}{lccccc}\toprule
		instance & \multicolumn{2}{c}{\texttt{orig}} &  & \multicolumn{2}{c}{\texttt{para}}\\ \cline{2-3} \cline{5-6} 
		\\[-1em]
		& run time in s & relgap &  & run time in s & relgap\\ \hline\hline 
		ghg\_1veh & 19.44 & 0.0001 &  & 25.70 & 0.0003\\ 
		pooling\_epa1 & 53.93 & 0.0001 &  & 9.42 & 0.0389\\ 
		kriging[...]red020 & 773.66 & 0.0001 &  & 19.23 & 0.0224\\ 
		t1000 & 915.51 & 0.0000 &  & 0.57 & 0.9990\\ 
		lnts50 & limit & 0.0258 &  & 51.10 & 0.0228\\ 
		pooling\_epa2 & limit & 0.0151 &  & 2047.92 & 0.0151\\ 
		lnts100 & limit & 0.0345 &  & 3889.47 & 0.0315\\ 
		\midrule 
		contvar & limit & 0.5024 &  & limit & 0.5031\\ 
		ex8\_4\_6 & limit & 1.0000 &  & limit & 1.0000\\ 
		feedtray & limit & 0.8048 &  & limit & 0.5762\\ 
		ghg\_2veh & limit & 0.0447 &  & limit & 0.0426\\ 
		ghg\_3veh & limit & 0.5200 &  & limit & 0.5173\\ 
		lnts200 & limit & 0.0895 &  & limit & 0.0864\\ 
		lnts400 & limit & 0.1153 &  & limit & inf\\ 
		pooling\_epa3 & limit & 0.0041 &  & limit & 0.0309\\ 
		\bottomrule
	\end{tabular}
	\caption{{\texttt{SCIP} on \texttt{sub} with run time $>$ \SI{5}{\second}.}}
	\label{tab:overview-filtered}
\end{table}

This realization encourages to compare the performance of \software{SCIP} on \texttt{orig} and \texttt{both} types.
Now, gaps of zero can be expected and, thus, a comparison in terms of sole running time is possible.
This allows to use the metrics shifted geometric mean (SGM) of run time with a shift of \SI{10}{\second}~\cite{Achterberg2007} and performance profiles~\cite{Siqueira2016perprof}.
On instance set \texttt{all}, this results in a SGM of \SI{530.6}{\second} for \texttt{orig} and \SI{429.2}{\second} for \texttt{both}, where a virtual best of these types yields \SI{379.3}{\second}.
If we restrict the instance set to \texttt{sub}, the corresponding values are \SI{535.6}{\second}, \SI{319.3}{\second}, and \SI{303.3}{\second}, respectively.
This demonstrates that the instances equipped with parabolic approximations on top of the original constraints are solved faster on average.
In particular, the solving times are reduced by more than 40\% when applied to an instance class, which can be identified a-priori.

The acceleration of \software{SCIP} by our approach is confirmed when looking at the performances profiles in~\Cref{fig:perfprof-all} for \texttt{all}.
A certain price in terms of time is paid due to the overhead of additional parabolic constraints on instances, which are originally solved fast.
However, \software{SCIP} seems to leverage the paraboloids for the other instances in order to boost the solving process significantly.
This effect is even enhanced when tackling the promising instances \texttt{sub}, see~\Cref{fig:perfprof-sub}.

In general, the computations indicate a huge potential to accelerate the solving process of \ac{minlp} problems by means of paraboloids. 
In~\Cref{subsec:para-relax}, we have laid out how additional parabolic constraints do not increase the complexity of the underlying problem.
This suggests to interpret parabolic constraints as quadratic cuts. 
Now, when solving \ac{mip} problems in practice, a solver does not incorporate all potential cuts at the root node, but applies them to strengthen the continuous relaxation adaptively throughout the branch-and-bound scheme.
Consequently, in \ac{minlp}, we strongly believe that there is still room for improvement of this approach by integrating the parabolic cuts into a spatial branch-and-bound framework,
since it keeps intermediate problem sizes low while providing tight relaxations selectively.

\begin{figure}[t]
	\begin{tikzpicture}
		\begin{semilogxaxis}[const plot,
			cycle list={
				solid,
				{dashed},
				{dotted}},
			xmin=1, xmax=4179.50,
			ymin=-0.003, ymax=1.003,
			ymajorgrids,
			ytick={0,0.2,0.4,0.6,0.8,1.0},
			xlabel={Performance ratio},
			ylabel={Percentage of problems solved},title={Performance Profile},
			legend pos={south east},
			width=\textwidth
			]
			\addplot+[mark=none, thick] coordinates {
				(1.0000,0.1538)
				(1.3297,0.1538)
				(1.4737,0.1795)
				(1.5833,0.2051)
				(1.6842,0.2308)
				(1.8333,0.2564)
				(2.0000,0.2821)
				(2.0870,0.2821)
				(2.1026,0.3077)
				(2.3333,0.3333)
				(2.5538,0.3590)
				(2.6347,0.3846)
				(3.1224,0.4103)
				(3.2500,0.4359)
				(3.6550,0.4615)
				(6.0000,0.4872)
				(6.1231,0.5128)
				(19.8000,0.5385)
				(4179.5022,0.5385)
			};
			\addlegendentry{scip-both}
			\addplot+[mark=none, thick] coordinates {
				(1.0000,0.4359)
				(1.3297,0.4615)
				(2.0000,0.4615)
				(2.0870,0.4872)
				(19.8000,0.4872)
				(24.7097,0.5128)
				(3980.4783,0.5385)
				(4179.5022,0.5385)
			};
			\addlegendentry{scip-orig}
		\end{semilogxaxis}
	\end{tikzpicture}
	\caption{\software{SCIP} on \texttt{all} with \texttt{orig} and \texttt{both}.}
	\label{fig:perfprof-all}
\end{figure}

\begin{figure}
	\begin{tikzpicture}
		\begin{semilogxaxis}[const plot,
			cycle list={
				solid,
				{dashed},
				{dotted}},
			xmin=1, xmax=4179.50,
			ymin=-0.003, ymax=1.003,
			ymajorgrids,
			ytick={0,0.2,0.4,0.6,0.8,1.0},
			xlabel={Performance ratio},
			ylabel={Percentage of problems solved},title={Performance Profile},
			legend pos={south east},
			width=\textwidth
			]
			\addplot+[mark=none, thick] coordinates {
				(1.0000,0.2174)
				(1.3297,0.2174)
				(1.5833,0.2609)
				(1.6842,0.3043)
				(2.0000,0.3478)
				(2.1026,0.3913)
				(2.3333,0.4348)
				(2.6347,0.4783)
				(3.1224,0.5217)
				(3.2500,0.5652)
				(19.8000,0.6087)
				(4179.5022,0.6087)
			};
			\addlegendentry{scip-both}
			\addplot+[mark=none, thick] coordinates {
				(1.0000,0.3913)
				(1.3297,0.4348)
				(19.8000,0.4348)
				(24.7097,0.4783)
				(3980.4783,0.5217)
				(4179.5022,0.5217)
			};
			\addlegendentry{scip-orig}
		\end{semilogxaxis}
	\end{tikzpicture}
	\caption{\software{SCIP} on \texttt{sub} with \texttt{orig} and \texttt{both}.}
	\label{fig:perfprof-sub}
\end{figure}

\section{Conclusion \& Future Work}
\label{sec:conclusion}

We proposed a quadratic approximation approach for solving general \acf{minlp} problems. 
Our method globally approximates nonlinear constraint functions with paraboloids and directly incorporates the resulting parabolic constraints into the original \ac{minlp} formulation. 
To construct these approximations, we combined tailored search methods with a \acf{mip} model, for which we provide theoretical verification. 
Moreover, we presented two main strategies for effectively integrating these approximations into \ac{minlp} models. 
We tested both strategies - approximation and incorporation - computationally on common functions and several instances from the MINLPLib, respectively, demonstrating promising numerical results.

We see significant potential in combining our parabolic approximation approach with established optimization techniques. 
In fact, adaptively incorporating parabolic constraints can be interpreted as a nonlinear counterpart to linear cutting-plane methods such as Gomory cuts. 
Hence, it appears particularly promising to embed our approach within spatial branch-and-bound frameworks, enabling concatenation of approximation results, domain splitting, and adaptive refinement of quadratic approximations. 
Moreover, combining parabolic approximations with adaptive cutting methods could further tighten relaxations, reduce initial problem sizes, and yield improved dual bounds.

Although our experiments showed clear improvements and practical benefits, the approximation scheme still offers room for enhancement.
Considering the analysis of expression trees, we suggest specializing the approximation scheme for one-dimensional functions by leveraging additional structural properties, such as differentiability. 
Such targeted approximations would allow tackling larger domains and enable smaller accuracies, potentially further improving computational performance.
In conclusion, our method provides a flexible and complementary tool for \ac{minlp} problems and suggests several promising directions for future research.

\begin{acronym}[MIP]
	\acro{mip}[{MIP}\xspace]{Mixed-Integer Linear Programming}	
	\acro{mipm}[{MIP}\xspace]{Mixed-Integer Linear Program}
\end{acronym}
\begin{acronym}[NLP]
	\acro{nlp}[{NLP}\xspace]{Nonlinear Programming}
\end{acronym}
\begin{acronym}[MIQCP]
	\acro{miqcp}[{MIQCP}\xspace]{Mixed-Integer Quadratically-Constrained Programming}
	\acro{miqcpm}[{MIQCP}\xspace]{Mixed-Integer Quadratically-Constrained Program}
\end{acronym}
\begin{acronym}[MINLP]
	\acro{minlp}[{MINLP}\xspace]{Mixed-Integer Nonlinear Programming}
	\acro{minlpm}[{MINLP}\xspace]{Mixed-Integer Nonlinear Program}
\end{acronym}
\begin{acronym}[CARP]
	\acro{carp}[CARP]{Candid Approximation and Relaxation by Paraboloids}
\end{acronym}
\newpage
\printbibliography
\newpage

\appendix
\section{Proofs for Lemmata in \Cref{sec:approx}}

\subsection{Proof of~\Cref{thm:upper-bound-K}}
\label{subsec:proof-thm-existence}

\begin{proof}
	We want to prove the statement constructively. 
	In particular, for every $\vt \in \mathcal{G}_\eps$, we define an explicit paraboloid $p$ such that all of these paraboloids fulfill the constraints of problem~\labelcref{problem:original-multi-dim}.
	As the grid widths are chosen identically, we can interchange $\vt \in \mathcal{G}_\eps$ and $\vd \in \mathcal{G}$.
	
	First, see that 
	\begin{equation*}
		\sum_{i = 1}^n \Delta t_i = n \Delta \leq \frac{n+1}{n} \frac{\eps - \delta}{3L}.
	\end{equation*}
	Hence, the choice of $\Delta t_i$ is in accordance with the required property~\labelcref{eq:choice-delta-ti}.
	The property~\labelcref{eq:choice-delta-di} follows by construction.
	
	Now, consider an arbitrary but fixed vector $\vt \in \mathcal{G}_\eps$.
	Then, the vertices $\vv$ of ${[\vt - \vect{\Delta}, \vt + \vect{\Delta}]}$ are exactly the elements in $\mathcal{N}(\vt)$. 
	This allows for a parameterized representation of the vertices as $\vv_\vect{u} \coloneqq \vt + \Delta \sum_{i = 1}^n u_i \vect{e}_i$ for all $\vect{u} \in \{-1, 1\}^n$, where $\vect{e}_i$ denotes the $i$th unit vector.
	
	Notice that a paraboloid $p(\vx) = \sum_{i = 1}^n \alpha_i x_i^2 + \beta_i x_i + \gamma$ has $2n + 1$ coefficients.
	Hence, it is uniquely determined by solving the following system of equations:
	\begin{align}
		\label{eq:define-paraboloid-1}
		p(\vt) &= f(\vt) - \delta, \\
		\label{eq:define-paraboloid-2}
		\frac{\mathrm{d}}{\mathrm{d}x_i} p(\vv_\vect{u}) &= -u_i 2L, &\text{ for all } \vect{u} \in \{-1, 1\}^n ,i \in [n].
	\end{align}
	The $2^n$ many equations~\labelcref{eq:define-paraboloid-2} can be rewritten as the system
	\begin{equation}
		\label{eq:define-paraboloid-2-solved}
		2\alpha_i (t_i + \Delta)  + \beta_i = -2 L \qquad \land \qquad 2\alpha_i (t_i - \Delta) + \beta_i = 2L,
	\end{equation}
	for all $i \in [n]$, which has $2n$ equations.
	Solving~\labelcref{eq:define-paraboloid-2-solved}, we receive $\alpha_i = -L/\Delta$ and $\beta_i =2L t_i /\Delta $. 
	As $\gamma$ is the only degree of freedom left, we compute it by solving~\labelcref{eq:define-paraboloid-1}. 
	Therefore, such a $p$ exists and is uniquely determined. 
	As $p$ fulfills equation~\labelcref{eq:define-paraboloid-1} for $\vt$, we can set $s_\vt = 1$ for this $p$ and, thus, with \labelcref{eq:define-paraboloid-2}, our construction satisfies \labelcref{eq:below-approx-multi-dim-1}-\labelcref{eq:below-approx-multi-dim-3}.
	
	In order to prove conformity with constraints~\labelcref{eq:above-approx-multi-dim} and~\labelcref{eq:integral-tracking-multi-dim}, we abstract the situation and assume $\vt = \vd = \vect{0}$. 
	Following the computations from before, it is ${\gamma = f(\vect{0}) - \delta}$, $\alpha_i = -L/\Delta$ and $\beta_i = 0$. 
	Now, if we show 
	\begin{equation}
		\label{eq:transfer-t-0}
		f(\vx) - \nu\eps - p(\vx) \geq 0, \text{ for all } \vx \in [-\Delta, \Delta], 
	\end{equation}
	this transfers to the original box $[\vd - \vect{\Delta}, \vd + \vect{\Delta}]$ and directly gives~\labelcref{eq:above-approx-multi-dim}.
	Furthermore, inequality~\labelcref{eq:transfer-t-0} implies that the argument of the integral in~\labelcref{eq:integral-tracking-multi-dim} is non-positive, which implies $v_\vd = 0$ if combined with the objective function.
	
	To show~\labelcref{eq:transfer-t-0}, we remark that the Lipschitz continuity of $f$ at a point $\vx$ gives $f(\vx) \geq f(\vect{0}) - L \norm[1]{\vx} \eqqcolon \Lambda(\vx)$.
	Then, the (sub)gradient $\nabla \Lambda(\vx) = -L\, \mathrm{sgn}(\vx)$, where $\mathrm{sgn}(\vx)$ is the component-wise sign function. 
	Now, defining the auxiliary function $g(\vx) \coloneqq \Lambda(\vx) - \nu\eps - p(\vx)$, we want to show $\min_\vx g(\vx) \geq 0$.
	From~\labelcref{eq:define-paraboloid-1}, we derive
	\begin{equation*}
		g(\vect{0}) = \Lambda(\vect{0}) - \nu\eps - p(\vect{0}) = f(\vect{0}) - \delta/2 - f(\vect{0}) + \delta > 0.
	\end{equation*}
	As $\alpha_i < 0$, it also holds true that
	\begin{equation*}
		\lim\limits_{\norm[1]{\vx} \rightarrow \infty} g(\vx) = \lim\limits_{\norm[1]{\vx} \rightarrow \infty} f(\vect{0}) -L\norm[1]{\vx} - \nu\eps - p(\vx) = \infty.
	\end{equation*}
	Hence, suitable candidates for achieving $\min_\vx g(\vx)$ must fulfill $\nabla g(\vx) = \vect{0}$, which gives
	\begin{equation*}
		\vect{0} = \nabla g(\vx) = -L\, \mathrm{sgn}(\vx) + (2L/\Delta) \vx.
	\end{equation*}
	For a solution $\vx'$ to this equation, it holds true that $x'_i = \pm \Delta/2$ for $i \in [n]$. 
	Evaluating $p$ with the coefficients computed before at $\vx'$ gives
	\begin{equation*} 
		p(\vx') = \sum_{i = 1}^n (-L/\Delta) (\Delta/2)^2 + f(\vect{0}) - \delta = -nL \Delta /4 + f(\vect{0}) - \delta.
	\end{equation*}
	The respective value of $g$ is 
	\begin{align*} 
		g(\vx') &= f(\vect{0}) - L\norm[1]{\vx'} - \nu\eps - p(\vx') \\
		& = f(\vect{0}) - Ln\Delta/2 - \nu\eps + nL \Delta / 4 - f(\vect{0}) + \delta \\
		& = \delta/2- nL\Delta /4 \geq \delta/2 - nL \delta/(2nL) = 0,
	\end{align*}
	using the choice $\nu = \delta/(2\eps)$ and $\Delta \leq (2\delta)/(nL)$.
	In summary, this shows that ${\min_{\vx}g(\vx) \geq 0}$ and thus 
	\begin{equation*}
		f(\vx) - \nu\eps - p(\vx) \geq \Lambda(\vx) - \nu\eps-p(\vx) = g(\vx) \geq \min_{\vx} g(\vx) \geq 0.
	\end{equation*}
	In particular, \labelcref{eq:above-approx-multi-dim} is true for $p$, as well as~\labelcref{eq:integral-tracking-multi-dim} for $v_\vd = 0$, $\vd \in \mathcal{G}$.
	
	Lastly, we prove the feasibility of $p$ for constraints~\labelcref{eq:gradient-bound-a} and \labelcref{eq:gradient-bound-b}. 
	From above, we have the explicit representation of $\alpha_i$ and $\beta_i$, $i \in [n]$. 
	Hence, we directly derive
	\begin{align*}
		\Abs{\frac{\mathrm{d}}{\mathrm{d}x_i} p(\va)} &= \abs{2\alpha_i a_i + \beta_i} = \abs{-2La_i/\Delta + 2Lt_i/\Delta} = \frac{2L}{\Delta} \abs{t_i - a_i} \\
		& = \frac{2L}{\Delta} (t_i - a_i) \leq \frac{2L}{\Delta} (b_i - a_i) \leq \frac{2L}{\Delta}\norm[\infty]{\vb - \va} = C.
	\end{align*}
	This shows that $p$ satisfies~\labelcref{eq:gradient-bound-a}. 
	The case for~\labelcref{eq:gradient-bound-b} follows analogously. 
	This concludes the proof. 
\end{proof}

\subsection{Proof of \Cref{le:p-Lipschitz}}
\label{subsec:proof-le-p-Lipschitz}

\begin{proof}
	Let $i \in [n]$ be arbitrary but fixed.
	Note that $\frac{\mathrm{d}}{\mathrm{d}x_i}p(\vx) = 2 \alpha_i x_i + \beta_i$.
	This is a linear term in $x_i$.
	Since $\vx \in \domain'$, it is $a'_i \leq x_i \leq b'_i$.
	Hence, the bounds on the absolute derivative at $\va'$ and $\vb'$ are equivalent to bounding a linear term in absolute values between $a'_i$ and $b'_i$.
	So, the first claim about the general bound of the derivative on $\domain'$ follows.
	
	Now, let $\vx,\, \vect{y} \in \domain'$, and $g(\lambda) \coloneqq p((1-\lambda)\vx + \lambda \vect{y})$.
	By the chain rule, it follows that $g'(\lambda) = \frac{\mathrm{d}}{\mathrm{d}\lambda} g(\lambda) = \nabla p((1- \lambda) \vx + \lambda \vect{y})\T (\vect{y} - \vx)$.
	By the mean value theorem, there exists $\hat{\lambda} \in [0, 1]$ such that 
	\begin{equation*}
		g(1) - g(0) = g'(\hat{\lambda}) (1 - 0) = g'(\hat{\lambda}),
	\end{equation*}
	and it follows 
	\begin{equation*}
		\abs{p(\vx) - p(\vect{y})} = \abs{g(1) - g(0)} = \abs{g'(\hat{\lambda})} = \abs{\nabla p((1- \hat{\lambda}) \vx + \hat{\lambda} \vect{y})\T (\vect{y} - \vx)}.
	\end{equation*}
	In order to show the Lipschitz continuity of $p$, we now have to find a bound for the upper. 
	This is achieved by an application of Hölder's inequality, in particular, 
	\begin{equation*}
		\abs{p(\vx) - p(\vect{y})} \leq \norm[1]{\nabla p((1- \hat{\lambda}) \vx + \hat{\lambda} \vect{y})} \norm[\infty]{\vect{y} - \vx} \leq n C \norm[\infty]{\vect{y} - \vx},
	\end{equation*}
	and 
	\begin{equation*}
		\abs{p(\vx) - p(\vect{y})} \leq \norm[\infty]{\nabla p((1- \hat{\lambda}) \vx + \hat{\lambda} \vect{y})} \norm[1]{\vect{y} - \vx} \leq C \norm[1]{\vect{y} - \vx},		
	\end{equation*}
	respectively, where we used the bounded partial derivative in each last step.
	This shows the second claim. 
\end{proof}

\subsection{Proof of \Cref{le:g-lower-bound}}
\label{subsec:proof-le-g-lower-bound}

\begin{proof}
	Let $g^* \coloneqq g(\sol) \coloneqq \min_{\vx \in \domain'} g(\vx)$.
	Note that $g^*$ is finite, since $g$ is (Lipschitz) continuous and $\domain'$ is compact.
	Hence, the point $\vx^* \in \domain'$ exists. 
	
	Now, by Caratheodory's Theorem, there exist vertices $\vv^1,\dots,\vv^{n+1}$ of $\domain'$ such that
	\begin{equation*}
		\sol = \sum_{j = 1}^{n + 1} \lambda_j \vv^j \qquad \land \qquad 
		\sum_{j = 1}^{n+1} \lambda_j = 1 \qquad \land \qquad
		\lambda_j \geq 0,\, \text{for all } j \in [n+1].
	\end{equation*}
	Next, let $k \in [n+1]$.
	Then, we can rewrite the first equality from above to
	\begin{equation*}
		\sol - \vv^k = \sum_{\substack{j = 1,\\ j \neq k}}^{n+1} \lambda_j\vv^j + (\lambda_k - 1) \vv^k = \sum_{\substack{j = 1,\\ j \neq k}}^{n+1} \lambda_j\vv^j - \sum_{\substack{j = 1,\\ j \neq k}}^{n+1} \lambda_j\vv^k = \sum_{j = 1}^{n+1} \lambda_j(\vv^j - \vv^k),
	\end{equation*}
	where we used that the sum of $\lambda_j$ is one in the second and $\vv^k - \vv^k = \vect{0}$ in the last step.
	
	By leveraging  $g(\vv^k) \geq 0$ from the assumption, the minimality of $g(\sol)$, and the Lipschitz continuity of $g$ (in this order), we derive
	\begin{align*}
		-g(\sol) &= 0 - g(\sol) \leq g(\vv^k) - g(\sol) = \abs{g(\vv^k) - g(\sol)} \leq L_g \norm[1]{\vv^k - \sol} \\
		&= L_g \Norm[1]{\sum_{j = 1,\, j \neq k}^{n+1} \lambda_j(\vv^j - \vv^k)} = L_g \sum_{i = 1}^n \Abs{\sum_{j = 1}^{n+1} \lambda_j (v_i^j - v_i^k)}.
	\end{align*}
	Note that the $i$th entry of any vertex $\vv$ is either $a'_i$ or $b'_i$.
	This implies that the inner bracket is non-negative if $v_i^k = a'_i$ and non-positive if $v_i^k = b'_i$. 
	Having this in mind and rearranging the sums, we can further rewrite the right-hand side of the upper inequality as 
	\begin{align*}
		L_g \sum_{i \in [n]} \Abs{\sum_{j = 1}^{n+1} \lambda_j (v_i^j - v_i^k)} & = L_g \left[ \sum_{\substack{i \in [n],\\ v_i^k = a'_i}}  \sum_{j = 1}^{n+1} \lambda_j (v_i^j - v_i^k) +  \sum_{\substack{i \in [n],\\ v_i^k = b'_i}}  \sum_{j = 1}^{n+1} \lambda_j (v_i^k - v_i^j) \right] \\
		& = L_g \left[ 
		\sum_{j = 1}^{n+1} \lambda_j \sum_{\substack{i \in [n],\\ v_i^k = a'_i}} (v_i^j - v_i^k) +  \sum_{j = 1}^{n+1} \lambda_j \sum_{\substack{i \in [n],\\ v_i^k = b'_i}} (v_i^k - v_i^j) \right] \\
		& = L_g \sum_{j = 1}^{n+1} \lambda_j \sum_{\substack{i \in [n],\\ v_i^k \neq v_i^j}} (b'_i - a'_i).
	\end{align*}
	Here, $\lambda_j \geq 0$ allows to omit the absolute value in the first equality and one needs to note that the inner brackets are $b'_i - a'_i$ if and only if $v_i^k \neq v_i^j$.
	In conclusion, we receive
	\begin{equation*}
		-g(\sol) \leq L_g \sum_{j = 1}^{n+1} \lambda_j \sum_{\substack{i \in [n],\\ v_i^k \neq v_i^j}} (b'_i - a'_i).
	\end{equation*}
	Summing this over $k$ gives
	\begin{align*}
		-(n+1) g(\sol) &\leq L_g \sum_{k=1}^{n+1} \sum_{j = 1}^{n+1} \lambda_j \sum_{\substack{i \in [n],\\ v_i^k \neq v_i^j}} (b'_i - a'_i) = L_g \sum_{j = 1}^{n+1} \lambda_j \sum_{k = 1}^{n+1} \sum_{\substack{i \in [n],\\ v_i^k \neq v_i^j}} (b'_i - a'_i) \\
		& = L_g \sum_{j = 1}^{n+1} \lambda_j \sum_{\substack{k = 1,\\ k \neq j}}^{n+1} \sum_{\substack{i \in [n],\\ v_i^k \neq v_i^j}} (b'_i - a'_i) \leq 
		L_g \sum_{j = 1}^{n+1} \lambda_j n \sum_{i = 1}^n (b'_i - a'_i) \\
		& = L_g n \sum_{j = 1}^{n+1} \lambda_j \norm[1]{\vb' - \va'} = L_g n \norm[1]{\vb' - \va'}.
	\end{align*}
	A division by $-(n+1)$ then gives the desired bound and finishes the proof. 
\end{proof}

\subsection{Proof of \Cref{le:g-upper-bound}}
\label{subsec:proof-le-g-upper-bound}

\begin{proof}
	Let $g^* \coloneqq g(\sol) \coloneqq \max_{\vx \in \domain'} g(\vx)$.
	Again, note that $g^*$ is finite and such a point $\vx^*$ exists due to the (Lipschitz) continuity of $g$ and the compactness of $\domain'$.
	
	Since $g$ is Lipschitz, we have 
	\begin{equation*}
		g^* - g(\vx) = \abs{g(\vx^*) - g(\vx)} \leq L_g \norm[1]{\vx - \sol},
	\end{equation*}
	which is equivalent to 
	\begin{equation}
		\label{eq:def-hat-function}
		g(\vx) \geq g^* - L_g \norm[1]{\vx - \sol} \eqqcolon \Lambda (\vx),
	\end{equation}
	for all $\vx \in \domain'$.
	Now, with the assumption and by writing $V \coloneqq \mathrm{vol}(\domain')$, we derive
	\begin{align*}
		0 &\geq \int_{\domain'} g(\vx)\mathrm{d}\vx 
		\geq \int_{\domain'} \Lambda(\vx) \mathrm{d}\vx 
		= \int_{\domain'} g^* - L_g\norm[1]{\vx - \sol} \mathrm{d}\vx  \\
		& = Vg^* - L_g \int_{\domain'} \norm[1]{\vx - \sol}\mathrm{d}\vx 
		= Vg^* - L_g \int_{[\va' - \sol, \vb'- \sol]} \norm[1]{\vx}\mathrm{d}\vx  \\ 
		& = \begin{aligned}[t] 
			& Vg^* - \\
			&  L_g \left[ \frac12 \mathrm{vol}([\va'-\sol, \vb'-\sol]) \sum_{i = 1}^n \left(b'_i - x_i^* - (a'_i - x_i^*) + 2 \frac{(b'_i- x_i^*)(a'_i - x_i^*)}{b'_i - x_i^* - (a'_i - x_i^*)}\right)\right] 
		\end{aligned} \\
		& = Vg^* - L_g \frac12 \mathrm{vol}([\va', \vb'])\left[ \sum_{i = 1}^n (b'_i - a'_i ) + 2 \frac{(b'_i- x_i^*)(a'_i - x_i^*)}{b'_i - a'_i}\right] \\
		& = Vg^* - \frac{L_gV}{2} \left[\norm[1]{\vb' - \va'} + 2 \sum_{i = 1}^n \frac{(b'_i- x_i^*)(a'_i - x_i^*)}{b'_i - a'_i}\right].
	\end{align*}
	A rearrangement gives 
	\begin{equation}
		\label{eq:first-bound-gstar}
		g^* \leq \frac{L_g}{2} \norm[1]{\vb' - \va'} + L_g \sum_{i = 1}^n \frac{(b'_i- x_i^*)(a'_i - x_i^*)}{b'_i - a'_i},
	\end{equation}
	where it remains to investigate the sum.
	We note that $0 < b'_i - a'_i \leq \Delta_{\max}$ for all $i \in [n]$ by definition.
	In addition, each vertex $\vv$ of $\domain'$ is either $a'_i$ or $b'_i$ in its $i$th entry.
	Hence, there exists a vertex $\vect{v}$ of $\mathcal{D}'$ which attains the minimum in the inequality
	\begin{equation*}
		(b'_i - x^*_i)(x^*_i - a'_i) \geq \min\{(b'_i - x^*_i),(x^*_i - a'_i)\}^2 = (v_i - x_i^*)^2,
	\end{equation*}
	for all $i \in [n]$.
	Using this $\vv$, we can bound the sum in~\eqref{eq:first-bound-gstar} as 
	\begin{equation*}
		\sum_{i = 1}^n \frac{(b'_i- x_i^*)(a'_i - x_i^*)}{b'_i - a'_i } 
		\leq -\frac{1}{\Delta_{\max}} \sum_{i = 1}^n (v_i - x_i^*)^2 
		= -\frac{1}{\Delta_{\max}} \norm[2]{\vv - \sol}^2.
	\end{equation*}
	As \eqref{eq:def-hat-function} is especially true for $\vv$, the assumption to be non-positive at vertices gives $0 \geq g(\vv) \geq g^* - L_g \norm[1]{\vv - \sol}$ which is equivalent to $\norm[1]{\vv - \sol} \geq g^*/L_g$. 
	Combined with the well-known estimate ${\sqrt{n}\norm[2]{\vx} \geq \norm[1]{\vx}}$, which is equivalent to $\norm[2]{\vx}^2 \geq \frac1n\norm[1]{\vx}^2$, we receive
	\begin{equation*}
		-\frac{1}{\Delta_{\max}} \norm[2]{\vv - \sol}^2 
		\leq - \frac{1}{\Delta_{\max} n} \norm[1]{\vv - \sol}^2 
		\leq -\frac{(g^*)^2}{\Delta_{\max} n L_g^2}.
	\end{equation*}
	In summary, \eqref{eq:first-bound-gstar} breaks down to 
	\begin{equation*}
		g^* \leq \frac{L_g}{2} \norm[1]{\vb' - \va'} - \frac{(g^*)^2}{\Delta_{\max} n L_g},
	\end{equation*}
	which can be rearranged to
	\begin{equation*}
		\frac12 (g^*)^2 + \frac12 \Delta_{\max} n L_g g^*- \frac14 \Delta_{\max} L_g^2 n \norm[1]{\vb' - \va'} \leq 0.
	\end{equation*}
	Solving this quadratic (in)equality for the positive solution leads to
	\begin{align*}
		g^* & \leq -\frac12 \Delta_{\max} n L_g + \sqrt{\frac14 \Delta_{\max}^2 n^2 L_g^2 + \frac12 \Delta_{\max} L_g^2 n \norm[1]{\vb' - \va'}} \\
		& \leq -\frac12 \Delta_{\max} n L_g + \sqrt{ \frac14 \Delta_{\max}^2 n^2 L_g^2 + \frac12 \Delta_{\max}^2 n^2 L_g^2} \\
		& = -\frac12 \Delta_{\max} n L_g + \sqrt{\frac34 \Delta_{\max}^2 n^2 L_g^2} = -\frac12 \Delta_{\max} n L_g + \frac{\sqrt{3}}{2} \Delta_{\max} n L_g \\
		& = \frac{\sqrt{3} - 1}{2} \Delta_{\max} n L_g,
	\end{align*}
	where we used in the second inequality that $\norm[1]{\vect{b}'- \vect{a}'} \leq n \Delta_{\max}$.
	This gives the desired bound.
\end{proof}

\section{Parabolic Approximations -- Remaining Results \& Plots}

\subsection{Definition of the Zigzag Function}
\label{subsec:zigzag-def}

The zigzag function mentioned and investigated in~\Cref{subsec:comp-para-approx} is defined by the points in~\Cref{tab:zigzag-points}, rounded to two digits for clear presentation.
All $x$-points are sampled randomly from the uniform distribution on the interval from the previous point plus one. 
The function value of $-5.00$ is sampled uniformly on $[-1, 1]$ and every consecutive one on an interval such that the maximal slope of the zigzag function is one.
The random seed in the \software{Python} implementation is two, see the repository.

\begin{table}[h]
	\centering
	\begin{tabular}{llcccccccccc}
		\toprule
		$x$ & & -5.00 & -4.96 & -4.52 &-4.19 & -3.56 & -3.29 & -2.76 & -2.24 & -1.45 & -0.95 \\
		$f(x)$ & & -0.13 & -0.12 & -0.19 & -0.39 & -0.64 & -0.58 & -0.97 & -1.29 & -0.74 & -0.39 \\
		\bottomrule
	\end{tabular}
	\begin{tabular}{llcccccccccc}
		\toprule
		$x$ & & -0.86 & -0.79 & -0.68 & -0.08 & 0.04 & 0.39 & 0.60 & 1.09 & 1.48 & 2.07 \\
		$f(x)$ & & -0.39 & -0.40 & -0.48 & -0.81 & -0.87 & -0.90 & -0.84 & -0.83 & -0.60 & -1.00 \\
		\bottomrule
	\end{tabular}
	\begin{tabular}{llcccccc}
		\toprule
		$x$ & & 2.77 & 3.28 &  3.62 & 4.06 & 4.84 & 5.00 \\
		$f(x)$ &  & -0.34 & 0.05 & 0.10 & 0.04 & 0.10 & 0.11 \\
		\bottomrule
	\end{tabular}
	\caption{Sample points defining the zigzag function.}
	\label{tab:zigzag-points}
\end{table}

\subsection{Number of Paraboloids for Approximating the Zigzag Function}
\label{subsec:approx-zigzag}
\
\begin{table}[H]
	\centering
	\begin{tabular}{ccccccccccc}
		\toprule
		& & \multicolumn{9}{c}{$\eps$} \\ \cline{3-11}
		& & \multicolumn{9}{c}{\vspace{-0.28cm}} \\
		$\domain$ & & $10^{0}$ & $10^{-1}$ & $10^{-2}$ & $10^{-3}$  & & $10^{0}$ & $10^{-1}$ & $10^{-2}$ & $10^{-3}$ \\ \cline{3-6}\cline{8-11}
		\\[-1em] 
		$[-5, -1]$ &  & 2 & 7 & - & - &  & 1 & 4 & - & - \\ 
		\\[-1em] 
		$[-2, \phantom{-}2]$ &  & 2 & 6 & - & - &  & 1 & 3 & 32 & - \\ 
		\\[-1em] 
		$[\phantom{-}1, \phantom{-}5]$ &  & 2 & 7 & - & - &  & 1 & 5 & - & - \\ 
		\\[-1em] 
		$[-5, \phantom{-}0]$ &  & 3 & 9 & - & - &  & 2 & 5 & - & - \\ 
		\\[-1em] 
		$[\phantom{-}0, \phantom{-}5]$ &  & 3 & 8 & - & - &  & 1 & 6 & - & - \\ 
		\\[-1em] 
		$[-5, \phantom{-}5]$ &  & 5 & - & - & - &  & 2 & 10 & - & - \\ 
		& & \multicolumn{4}{c}{\texttt{exact-MIP}} & & \multicolumn{4}{c}{\texttt{practical-MIP}} \\
		\bottomrule
	\end{tabular}
	\caption{Number of paraboloids to approximate the zigzag function from above.}
	\label{tab:number-paraboloids-zigzag-above}
\end{table}

\subsection{Number of Paraboloids for Approximating $x^3$}
\label{subsec:approx-x3}
\
\begin{table}[H]
	\centering
	\begin{tabular}{ccccccccccc}
		\toprule
		& & \multicolumn{9}{c}{$\eps$} \\ \cline{3-11}
		& & \multicolumn{9}{c}{\vspace{-0.28cm}} \\
		$\domain$ & & $10^{0}$ & $10^{-1}$ & $10^{-2}$ & $10^{-3}$  & & $10^{0}$ & $10^{-1}$ & $10^{-2}$ & $10^{-3}$ \\ \cline{3-6}\cline{8-11}
		\\[-1em] 
		$[-2, \phantom{-}2]$ &  & 5 & - & - & - &  & 3 & 11 & 169 & - \\ 
		\\[-1em] 
		$[-5, \phantom{-}0]$ &  & - & - & - & - &  & 5 & 16 & - & - \\ 
		\\[-1em] 
		$[\phantom{-}0, \phantom{-}5]$ &  & - & - & - & - &  & 4 & 17 & - & - \\ 
		\\[-1em] 
		$[-5, \phantom{-}2]$ &  & - & - & - & - &  & - & - & - & - \\ 
		\\[-1em] 
		$[-2, \phantom{-}5]$ &  & - & - & - & - &  & - & - & - & - \\ 
		\\[-1em] 
		$[-5, \phantom{-}5]$ &  & - & - & - & - &  & - & - & - & - \\ 
		& & \multicolumn{4}{c}{\texttt{exact-MIP}} & & \multicolumn{4}{c}{\texttt{practical-MIP}} \\
		\bottomrule
	\end{tabular}
	\caption{Number of paraboloids to approximate $x^3$ from above.}
	\label{tab:number-paraboloids-x3-above}
\end{table}
\begin{table}[H]
	\centering
	\begin{tabular}{ccccccccccc}
		\toprule
		& & \multicolumn{9}{c}{$\eps$} \\ \cline{3-11}
		& & \multicolumn{9}{c}{\vspace{-0.28cm}} \\
		$\domain$ & & $10^{0}$ & $10^{-1}$ & $10^{-2}$ & $10^{-3}$  & & $10^{0}$ & $10^{-1}$ & $10^{-2}$ & $10^{-3}$ \\ \cline{3-6}\cline{8-11}
		\\[-1em] 
		$[-2, \phantom{-}2]$ &  & 5 & - & - & - &  & 3 & 12 & 97 & - \\ 
		\\[-1em] 
		$[-5, \phantom{-}0]$ &  & - & - & - & - &  & 5 & 17 & - & - \\ 
		\\[-1em] 
		$[\phantom{-}0, \phantom{-}5]$ &  & - & - & - & - &  & 5 & 17 & - & - \\ 
		\\[-1em] 
		$[-5, \phantom{-}2]$ &  & - & - & - & - &  & - & - & - & - \\ 
		\\[-1em] 
		$[-2, \phantom{-}5]$ &  & - & - & - & - &  & - & - & - & - \\ 
		\\[-1em] 
		$[-5, \phantom{-}5]$ &  & - & - & - & - &  & - & - & - & - \\ 
		& & \multicolumn{4}{c}{\texttt{exact-MIP}} & & \multicolumn{4}{c}{\texttt{practical-MIP}} \\
		\bottomrule
	\end{tabular}
	\caption{Number of paraboloids to approximate $x^3$ from below.}
	\label{tab:number-paraboloids-x3-below}
\end{table}

\subsection{Plots for the Parabolic Approximations}
\label{subsec:plots}
For the zigzag function, \Cref{fig:zigzag-approx-below} visualizes the approximation results on $[-5, 5]$ from below for $\eps = 10^{-1}$. 
In~\Cref{fig:approx-zigzag-below-e-0}, we visualize the latter for $\eps = 10^0$ and in~\Cref{fig:approx-zigzag-above-e-0} and~\Cref{fig:approx-zigzag-above-e-1} the situation from above.

\begin{figure}[H]
	\caption{Approximation with $\eps = 10^0$ of the zigzag function on $[-5, 5]$ from below by 2 paraboloids.}
	\label{fig:approx-zigzag-below-e-0}
	\centering 
	\begin{tikzpicture}
		\begin{axis}[
			axis lines=middle,
			xlabel={$x$},
			ylabel={$f(x)$},
			grid=none,
			width=12cm,
			height=8cm,
			xtick={-5,-4,-3,-2,-1,0,1,2,3,4,5},
			ytick={-1.5,-1,-0.5,0,0.5,1},
			clip mode=individual,
			ymin=-1.8, ymax=0.6
			]
			\addplot[color=black, line width=2pt,mark size=0pt] coordinates {
				(-5,-0.12801019571599248)
				(-4.964333030490388,-0.12446757554744907)
				(-4.523363861798294,-0.1946982637766576)
				(-4.186332389004458,-0.3937836755004518)
				(-3.563254132317301,-0.6434453088250474)
				(-3.289095129965463,-0.5770254473388788)
				(-2.7552444566311944,-0.9671849317937837)
				(-2.2368021165781053,-1.2943844007567447)
				(-1.4493203202395986,-0.7368862021423581)
				(-0.9500258512314901,-0.3908137365219214)
				(-0.8611768289925197,-0.38988151656613956)
				(-0.7865431896495103,-0.4006105011239624)
				(-0.6809775831455038,-0.4793286685566136)
				(-0.080199727256694,-0.808540514189117)
				(0.035676500210189144,-0.8733602373302513)
				(0.3920045223634554,-0.8963166811449923)
				(0.601730316365771,-0.8374228572870663)
				(1.0899694535620053,-0.832309313959877)
				(1.482993178169379,-0.6014963419021636)
				(2.0671973152584076,-0.9960694536135861)
				(2.770942138399492,-0.34221861821876076)
				(3.2759504159580066,0.05120309491708941)
				(3.6241479321024053,0.09796193184439114)
				(4.0574184357655545,0.043150841431488146)
				(4.8362120289126835,0.09860744585003525)
				(5,0.11308897556235686)
			};
			% Loop over parameter sets provided as comma‐separated values:
			\foreach \qcoeff/\lcoeff/\constcoeff in {
				0.04162/0.08301/-1.345523,
				-2.76964/-5.5995/-3.30654
			} {	
				\addplot[domain=-5:5, smooth, variable=\x, gray]
				({\x}, {\qcoeff*\x*\x + \lcoeff*\x + \constcoeff});
			}
		\end{axis}
	\end{tikzpicture}
\end{figure}
\begin{figure}[H]
	\caption{Approximation with $\eps = 10^0$ of the zigzag function on $[-5, 5]$ from above by 2 paraboloids.}
	\label{fig:approx-zigzag-above-e-0}
	\centering 
	\begin{tikzpicture}
		\begin{axis}[
			axis lines=middle,
			xlabel={$x$},
			ylabel={$f(x)$},
			grid=none,
			width=12cm,
			height=8cm,
			xtick={-5,-4,-3,-2,-1,0,1,2,3,4,5},
			ytick={-1.5,-1,-0.5,0,0.5,1},
			clip mode=individual,
			ymin=-1.8, ymax=0.6
			]
			\addplot[color=black, line width=2pt,mark size=0pt] coordinates {
				(-5,-0.12801019571599248)
				(-4.964333030490388,-0.12446757554744907)
				(-4.523363861798294,-0.1946982637766576)
				(-4.186332389004458,-0.3937836755004518)
				(-3.563254132317301,-0.6434453088250474)
				(-3.289095129965463,-0.5770254473388788)
				(-2.7552444566311944,-0.9671849317937837)
				(-2.2368021165781053,-1.2943844007567447)
				(-1.4493203202395986,-0.7368862021423581)
				(-0.9500258512314901,-0.3908137365219214)
				(-0.8611768289925197,-0.38988151656613956)
				(-0.7865431896495103,-0.4006105011239624)
				(-0.6809775831455038,-0.4793286685566136)
				(-0.080199727256694,-0.808540514189117)
				(0.035676500210189144,-0.8733602373302513)
				(0.3920045223634554,-0.8963166811449923)
				(0.601730316365771,-0.8374228572870663)
				(1.0899694535620053,-0.832309313959877)
				(1.482993178169379,-0.6014963419021636)
				(2.0671973152584076,-0.9960694536135861)
				(2.770942138399492,-0.34221861821876076)
				(3.2759504159580066,0.05120309491708941)
				(3.6241479321024053,0.09796193184439114)
				(4.0574184357655545,0.043150841431488146)
				(4.8362120289126835,0.09860744585003525)
				(5,0.11308897556235686)
			};
			% Loop over parameter sets provided as comma‐separated values:
			\foreach \qcoeff/\lcoeff/\constcoeff in {
				0.16655/0.74766/0.169164,
				0.04529/-0.00215/-0.42532
			} {	
				\addplot[domain=-5:5, smooth, variable=\x, gray]
				({\x}, {\qcoeff*\x*\x + \lcoeff*\x + \constcoeff});
			}
		\end{axis}
	\end{tikzpicture}
\end{figure}

\begin{figure}[H]
	\caption{Approximation with $\eps = 10^{-1}$ of the zigzag function on $[-5, 5]$ from above by 10 paraboloids.}
	\label{fig:approx-zigzag-above-e-1}
	\centering 
	\begin{tikzpicture}
		\begin{axis}[
			axis lines=middle,
			xlabel={$x$},
			ylabel={$f(x)$},
			grid=none,
			width=12cm,
			height=8cm,
			xtick={-5,-4,-3,-2,-1,0,1,2,3,4,5},
			ytick={-1.5,-1,-0.5,0,0.5,1},
			clip mode=individual,
			ymin=-1.8, ymax=0.6
			]
			\addplot[color=black, line width=2pt,mark size=0pt] coordinates {
				(-5,-0.12801019571599248)
				(-4.964333030490388,-0.12446757554744907)
				(-4.523363861798294,-0.1946982637766576)
				(-4.186332389004458,-0.3937836755004518)
				(-3.563254132317301,-0.6434453088250474)
				(-3.289095129965463,-0.5770254473388788)
				(-2.7552444566311944,-0.9671849317937837)
				(-2.2368021165781053,-1.2943844007567447)
				(-1.4493203202395986,-0.7368862021423581)
				(-0.9500258512314901,-0.3908137365219214)
				(-0.8611768289925197,-0.38988151656613956)
				(-0.7865431896495103,-0.4006105011239624)
				(-0.6809775831455038,-0.4793286685566136)
				(-0.080199727256694,-0.808540514189117)
				(0.035676500210189144,-0.8733602373302513)
				(0.3920045223634554,-0.8963166811449923)
				(0.601730316365771,-0.8374228572870663)
				(1.0899694535620053,-0.832309313959877)
				(1.482993178169379,-0.6014963419021636)
				(2.0671973152584076,-0.9960694536135861)
				(2.770942138399492,-0.34221861821876076)
				(3.2759504159580066,0.05120309491708941)
				(3.6241479321024053,0.09796193184439114)
				(4.0574184357655545,0.043150841431488146)
				(4.8362120289126835,0.09860744585003525)
				(5,0.11308897556235686)
			};
			% Loop over parameter sets provided as comma‐separated values:
			\foreach \qcoeff/\lcoeff/\constcoeff in {
				-0.00116/0.0244/0.025251,
				1.92379/8.69682/8.601094,
				0.55121/-2.12036/1.330725,
				0.42472/-0.33794/-0.787855,
				0.70845/-1.09951/-0.40895,
				0.11399/-0.17167/-0.597605,
				0.62702/2.82788/2.081011,
				0.31329/2.09793/2.93405,
				0.32132/1.47348/0.793305,
				2.04038/-8.2865/7.497918
			} {	
				\addplot[domain=-5:5, smooth, variable=\x, gray]
				({\x}, {\qcoeff*\x*\x + \lcoeff*\x + \constcoeff});
			}
		\end{axis}
	\end{tikzpicture}
\end{figure}

For the sine function, we visualize its approximations on $[-\pi/2, 3\pi/2]$ from above for $\eps = 10^0, 10^{-1}, 10^{-2}$ in~\Cref{fig:approx-sin-above-e-0,fig:approx-sin-above-e-1,fig:approx-sin-above-e-2}, respectively, and the approximations from below for $\eps = 10^0, 10^{-1}$ in~\Cref{fig:approx-sin-below-e-0} and~\Cref{fig:approx-sin-below-e-1}. 
The case $\eps = 10^{-2}$ for the latter can be found in~\Cref{fig:approx-sin-below-e-2}.

\begin{figure}[H]
	\caption{Approximation  with $\eps = 10^{0}$ of $\sin$ on $[-\pi/2, 3\pi/2]$ from above by 1 paraboloid.}
	\label{fig:approx-sin-above-e-0}
	\centering
	
	\begin{tikzpicture}[scale=1.8]
		% Define clipping area
		\clip (-1.57,-1.2) rectangle (4.71,1.4);
		\ % Draw x-axis
		\draw[->] (-1.57,0) -- (4.71,0) node[right] {$x$};
		% Draw y-axis
		\draw[->] (0,-1.2) -- (0,1.4) node[above] {$y$};
		% Draw grid lines
		\foreach \x in {-1.5,-1.0,...,4.5}
		\draw (\x,-0.05) -- (\x,0.05);
		\foreach \y in {-1.0,-0.5,...,1.0}
		\draw (-0.05,\y) -- (0.05,\y);
		% Label x-axis
		\foreach \x/\label in {-1.5,-1.0,-0.5,0.5,1.0,1.5,2.0,2.5,3.0,3.5,4.0,4.5}
		\draw (\x,0) node[below] {$\label$};
		% Label y-axis
		\foreach \y in {-1.0,-0.5,0.5,1.0}
		\draw (0,\y) node[left] {$\y$};
		% Draw sine function
		\draw[domain=-1.57:4.71,smooth,variable=\x,black, line width=2pt] plot ({\x},{sin(deg(\x))});
		% Define parameters for parabolic functions
		\def\quadraticcoeffs{{-0.21837,0}}
		\def\linearcoeffs{{0.68602,0}}
		\def\constantcoeffs{{0.616465,0}}
		% Draw parabolic functions
		\foreach \i in {0,...,0}{
			\draw[domain=-1.57:4.71,smooth,variable=\x,gray] plot ({\x},{\quadraticcoeffs[\i]*\x*\x + \linearcoeffs[\i]*\x + \constantcoeffs[\i]});
		}
	\end{tikzpicture}
\end{figure}
\begin{figure}[H]
	\caption{Approximation  with $\eps = 10^{-1}$ of $\sin$ on $[-\pi/2, 3\pi/2]$ from above by 5 paraboloids.}
	\label{fig:approx-sin-above-e-1}
	\centering
	
	\begin{tikzpicture}[scale=1.8]
		% Define clipping area
		\clip (-1.57,-1.2) rectangle (4.71,1.4);
		\ % Draw x-axis
		\draw[->] (-1.57,0) -- (4.71,0) node[right] {$x$};
		% Draw y-axis
		\draw[->] (0,-1.2) -- (0,1.4) node[above] {$y$};
		% Draw grid lines
		\foreach \x in {-1.5,-1.0,...,4.5}
		\draw (\x,-0.05) -- (\x,0.05);
		\foreach \y in {-1.0,-0.5,...,1.0}
		\draw (-0.05,\y) -- (0.05,\y);
		% Label x-axis
		\foreach \x/\label in {-1.5,-1.0,-0.5,0.5,1.0,1.5,2.0,2.5,3.0,3.5,4.0,4.5}
		\draw (\x,0) node[below] {$\label$};
		% Label y-axis
		\foreach \y in {-1.0,-0.5,0.5,1.0}
		\draw (0,\y) node[left] {$\y$};
		% Draw sine function
		\draw[domain=-1.57:4.71,smooth,variable=\x,black, line width=2pt] plot ({\x},{sin(deg(\x))});
		% Define parameters for parabolic functions
		\def\quadraticcoeffs{{0.3032,0.07435,-0.16698,0.02547,0.24552}}
		\def\linearcoeffs{{1.09246,0.78482,0.58126,-0.87741,-2.56527}}
		\def\constantcoeffs{{0.00779,0.068695,0.50138,2.63476,5.636388}}
		% Draw parabolic functions
		\foreach \i in {0,...,4}{
			\draw[domain=-1.57:4.71,smooth,variable=\x,gray] plot ({\x},{\quadraticcoeffs[\i]*\x*\x + \linearcoeffs[\i]*\x + \constantcoeffs[\i]});
		}
	\end{tikzpicture}
\end{figure}

\begin{figure}[H]
	\caption{Approximation  with $\eps = 10^{-2}$ of $\sin$ on $[-\pi/2, 3\pi/2]$ from above by 47 paraboloids.}
	\label{fig:approx-sin-above-e-2}
	\centering
	
	\begin{tikzpicture}[scale=1.8]
		% Define clipping area
		\clip (-1.57,-1.2) rectangle (4.71,1.4);
		\ % Draw x-axis
		\draw[->] (-1.57,0) -- (4.71,0) node[right] {$x$};
		% Draw y-axis
		\draw[->] (0,-1.2) -- (0,1.4) node[above] {$y$};
		% Draw grid lines
		\foreach \x in {-1.5,-1.0,...,4.5}
		\draw (\x,-0.05) -- (\x,0.05);
		\foreach \y in {-1.0,-0.5,...,1.0}
		\draw (-0.05,\y) -- (0.05,\y);
		% Label x-axis
		\foreach \x/\label in {-1.5,-1.0,-0.5,0.5,1.0,1.5,2.0,2.5,3.0,3.5,4.0,4.5}
		\draw (\x,0) node[below] {$\label$};
		% Label y-axis
		\foreach \y in {-1.0,-0.5,0.5,1.0}
		\draw (0,\y) node[left] {$\y$};
		% Draw sine function
		\draw[domain=-1.57:4.71,smooth,variable=\x,black, line width=2pt] plot ({\x},{sin(deg(\x))});
		% Define parameters for parabolic functions
		\def\quadraticcoeffs{{-0.20231,0.06736,0.41721,-0.09784,-0.07369,0.9238,0.36683,-0.0147,0.20208,0.46708,0.19835,0.42595,0.41707,3.67596,0.39481,2.70845,-0.15427,0.24152,0.12879,0.2046,-0.12062,0.48584,0.09427,-0.01203,-0.00435,-0.15848,0.28305,0.2497,0.40011,0.39954,6.1725,3.46099,0.40637,0.48458,0.35661,0.40294,-0.0021,0.49745,0.10235,3.00306,0.02891,0.10276,8.82625,-0.14572,0.05131,0.2581,0.31703}}
		\def\linearcoeffs{{0.63455,0.78677,1.35612,0.66207,-0.21129,-2.19396,1.24537,0.71307,0.95549,-4.40224,-2.19631,1.25937,-3.97638,3.58895,-3.62281,-24.40265,0.32695,1.01618,0.85638,-2.24478,0.65256,-4.55465,-1.40808,-0.63955,-0.69385,0.35431,1.08585,-2.59834,1.3113,-3.82649,-10.77263,-26.25302,-3.75062,-4.46252,-3.46542,1.32365,-0.70973,1.38906,-1.46801,6.32989,0.74945,0.82539,-24.35748,0.27106,-1.09302,1.04326,-3.13945}}
		\def\constantcoeffs{{0.502433,0.069652,0.100778,0.281389,1.632087,2.255116,0.051555,0.15636,0.002269,9.379322,4.945187,0.046185,8.47654,0.463347,7.498757,53.992102,0.885174,0.000286,0.027432,5.034805,0.322661,9.688871,3.542005,2.280955,2.366291,0.849657,0.007254,5.699399,0.078173,8.159481,5.482035,49.228713,7.799316,9.348557,7.411329,0.084981,2.391159,0.090937,3.644989,2.489892,0.105902,0.042972,17.787401,0.958608,3.011315,0.001926,6.754149}}
		Number of paraboloids: 47
		% Draw parabolic functions
		\foreach \i in {0,...,46}{
			\draw[domain=-1.57:4.71,smooth,variable=\x,gray] plot ({\x},{\quadraticcoeffs[\i]*\x*\x + \linearcoeffs[\i]*\x + \constantcoeffs[\i]});
		}
	\end{tikzpicture}
\end{figure}

\begin{figure}[H]
	\caption{Approximation  with $\eps = 10^{-0}$ of $\sin$ on $[-\pi/2, 3\pi/2]$ from below by 1 paraboloid.}
	\label{fig:approx-sin-below-e-0}
	\centering
	
	\begin{tikzpicture}[scale=1.8]
		% Define clipping area
		\clip (-1.57,-1.2) rectangle (4.71,1.4);
		\ % Draw x-axis
		\draw[->] (-1.57,0) -- (4.71,0) node[right] {$x$};
		% Draw y-axis
		\draw[->] (0,-1.2) -- (0,1.4) node[above] {$y$};
		% Draw grid lines
		\foreach \x in {-1.5,-1.0,...,4.5}
		\draw (\x,-0.05) -- (\x,0.05);
		\foreach \y in {-1.0,-0.5,...,1.0}
		\draw (-0.05,\y) -- (0.05,\y);
		% Label x-axis
		\foreach \x/\label in {-1.5,-1.0,-0.5,0.5,1.0,1.5,2.0,2.5,3.0,3.5,4.0,4.5}
		\draw (\x,0) node[below] {$\label$};
		% Label y-axis
		\foreach \y in {-1.0,-0.5,0.5,1.0}
		\draw (0,\y) node[left] {$\y$};
		% Draw sine function
		\draw[domain=-1.57:4.71,smooth,variable=\x,black, line width=2pt] plot ({\x},{sin(deg(\x))});
		% Define parameters for parabolic functions
		\def\quadraticcoeffs{{-0.21837,0}}
		\def\linearcoeffs{{0.68602,0}}
		\def\constantcoeffs{{-0.08222,0}}
		% Draw parabolic functions
		\foreach \i in {0,...,0}{
			\draw[domain=-1.57:4.71,smooth,variable=\x,gray] plot ({\x},{\quadraticcoeffs[\i]*\x*\x + \linearcoeffs[\i]*\x + \constantcoeffs[\i]});
		}
	\end{tikzpicture}
\end{figure}

\begin{figure}[H]
	\caption{Approximation  with $\eps = 10^{-1}$ of $\sin$ on $[-\pi/2, 3\pi/2]$ from below by 3 paraboloids.}
	\label{fig:approx-sin-below-e-1}
	\centering
	
	\begin{tikzpicture}[scale=1.8]
		% Define clipping area
		\clip (-1.57,-1.2) rectangle (4.71,1.4);
		\ % Draw x-axis
		\draw[->] (-1.57,0) -- (4.71,0) node[right] {$x$};
		% Draw y-axis
		\draw[->] (0,-1.2) -- (0,1.4) node[above] {$y$};
		% Draw grid lines
		\foreach \x in {-1.5,-1.0,...,4.5}
		\draw (\x,-0.05) -- (\x,0.05);
		\foreach \y in {-1.0,-0.5,...,1.0}
		\draw (-0.05,\y) -- (0.05,\y);
		% Label x-axis
		\foreach \x/\label in {-1.5,-1.0,-0.5,0.5,1.0,1.5,2.0,2.5,3.0,3.5,4.0,4.5}
		\draw (\x,0) node[below] {$\label$};
		% Label y-axis
		\foreach \y in {-1.0,-0.5,0.5,1.0}
		\draw (0,\y) node[left] {$\y$};
		% Draw sine function
		\draw[domain=-1.57:4.71,smooth,variable=\x,black, line width=2pt] plot ({\x},{sin(deg(\x))});
		% Define parameters for parabolic functions
		\def\quadraticcoeffs{{-0.05375,-0.20301,-0.39804}}
		\def\linearcoeffs{{0.16887,0.63779,1.25049}}
		\def\constantcoeffs{{-0.654036,-0.110597,-0.047268}}
		% Draw parabolic functions
		\foreach \i in {0,...,2}{
			\draw[domain=-1.57:4.71,smooth,variable=\x,gray] plot ({\x},{\quadraticcoeffs[\i]*\x*\x + \linearcoeffs[\i]*\x + \constantcoeffs[\i]});
		}
	\end{tikzpicture}
\end{figure}

The approximations for the $\exp$ function from below on $[-2, 2]$ are summarized in~\Cref{fig:approx-exp-below}, whereas~\Cref{fig:approx-exp-above} shows the case when approximating from above.

\begin{figure}[H]
	\caption{Approximation  with $\eps = 10^0, 10^{-1}, 10^{-2}$ of $\exp$ on $[-2, 2]$ from below by 2, 4, 20 paraboloids, respectively.}
	\label{fig:approx-exp-below}
	\centering
	\begin{subfigure}{0.32\textwidth}
		\centering
		\begin{tikzpicture}[scale=1.0]
				% Define clipping area
				\clip (-2.1, -1.0) rectangle (2.1, 7.4);
				\ % Draw x-axis
				\draw[->] (-2,0) -- (2,0) node[right] {$x$};
				% Draw y-axis
				\draw[->] (0,-0.6) -- (0,7.4) node[above] {$y$};
				% Draw grid lines
				\foreach \x in {-2.0,-1.5,...,2.0}
				\draw (\x,-0.05) -- (\x,0.05);
				\foreach \y in {0.5,1.0,...,7.0}
				\draw (-0.05,\y) -- (0.05,\y);
				% Label x-axis
				\foreach \x/\label in {-2,-1,1,2}
				\draw (\x,0) node[below] {$\label$};
				% Label y-axis
				\foreach \y in {1,...,7}
				\draw (0,\y) node[left] {$\y$};
				% Draw exp function
				\draw[domain=-2.0:2.0,smooth,variable=\x,black, line width=2pt] plot ({\x},{exp(\x)});
				% Define parameters for parabolic functions
				\def\quadraticcoeffs{{0.57873,-3.77815}}
				\def\linearcoeffs{{1.53094,26.97593}}
				\def\constantcoeffs{{0.608322,-31.450205}}
				% Draw parabolic functions
				\foreach \i in {0,...,1}{
						\draw[domain=-2.0:2.0,smooth,variable=\x,gray] plot ({\x},{\quadraticcoeffs[\i]*\x*\x + \linearcoeffs[\i]*\x + \constantcoeffs[\i]});
					}
			\end{tikzpicture}
	\end{subfigure}
	\hfill
	\begin{subfigure}{0.32\textwidth}
		\centering
		\begin{tikzpicture}[scale=1.0]
			% Define clipping area
			\clip (-2.1, -1.0) rectangle (2.1, 7.4);
			\ % Draw x-axis
			\draw[->] (-2,0) -- (2,0) node[right] {$x$};
			% Draw y-axis
			\draw[->] (0,-0.6) -- (0,7.4) node[above] {$y$};
			% Draw grid lines
			\foreach \x in {-2.0,-1.5,...,2.0}
			\draw (\x,-0.05) -- (\x,0.05);
			\foreach \y in {0.5,1.0,...,7.0}
			\draw (-0.05,\y) -- (0.05,\y);
			% Label x-axis
			\foreach \x/\label in {-2,-1,1,2}
			\draw (\x,0) node[below] {$\label$};
			% Label y-axis
			\foreach \y in {1,...,7}
			\draw (0,\y) node[left] {$\y$};
			% Draw exp function
			\draw[domain=-2.0:2.0,smooth,variable=\x,black, line width=2pt] plot ({\x},{exp(\x)});
			% Define parameters for parabolic functions
			\def\quadraticcoeffs{{0.25121,0.50878,0.8351,1.19818}}
			\def\linearcoeffs{{0.93225,1.35704,1.65423,1.79718}}
			\def\constantcoeffs{{0.994872,0.813026,0.101645,-1.067885}}
			% Draw parabolic functions
			\foreach \i in {0,...,3}{
				\draw[domain=-2.0:2.0,smooth,variable=\x,gray] plot ({\x},{\quadraticcoeffs[\i]*\x*\x + \linearcoeffs[\i]*\x + \constantcoeffs[\i]});
			}
		\end{tikzpicture}
	\end{subfigure}
	\hfill
	\begin{subfigure}{0.32\textwidth}
		\centering
		\begin{tikzpicture}[scale=1.0]
			% Define clipping area
			\clip (-2.1, -1.0) rectangle (2.1, 7.4);
			\ % Draw x-axis
			\draw[->] (-2,0) -- (2,0) node[right] {$x$};
			% Draw y-axis
			\draw[->] (0,-0.6) -- (0,7.4) node[above] {$y$};
			% Draw grid lines
			\foreach \x in {-2.0,-1.5,...,2.0}
			\draw (\x,-0.05) -- (\x,0.05);
			\foreach \y in {0.5,1.0,...,7.0}
			\draw (-0.05,\y) -- (0.05,\y);
			% Label x-axis
			\foreach \x/\label in {-2,-1,1,2}
			\draw (\x,0) node[below] {$\label$};
			% Label y-axis
			\foreach \y in {1,...,7}
			\draw (0,\y) node[left] {$\y$};
			% Draw exp function
			\draw[domain=-2.0:2.0,smooth,variable=\x,black, line width=2pt] plot ({\x},{exp(\x)});
			% Define parameters for parabolic functions
			\def\quadraticcoeffs{{0.32149,0.49213,-2.35214,0.25018,0.78822,1.14861,0.40455,1.02457,1.27344,0.25589,0.48726,0.69395,0.20935,-0.00306,0.58351,0.37108,0.08142,0.14605,0.90002,0.59333}}
			\def\linearcoeffs{{1.07217,1.33547,16.39458,0.92996,1.62265,1.78554,1.21192,1.74783,1.80693,0.94224,1.32915,1.54949,0.83681,0.18166,1.4432,1.15855,0.4775,0.67013,1.69139,1.45363}}
			\def\constantcoeffs{{0.993714,0.837724,-15.998307,0.994525,0.227754,-0.888085,0.940974,-0.467311,-1.344585,0.996253,0.844586,0.458485,0.971545,0.500119,0.687641,0.968114,0.755463,0.891383,-0.08199,0.669272}}
			% Draw parabolic functions
			\foreach \i in {0,...,19}{
				\draw[domain=-2.0:2.0,smooth,variable=\x,gray] plot ({\x},{\quadraticcoeffs[\i]*\x*\x + \linearcoeffs[\i]*\x + \constantcoeffs[\i]});
			}
		\end{tikzpicture}
	\end{subfigure}
\end{figure}
\ \\
\begin{figure}[H]
	\caption{Approximation  with $\eps = 10^0, 10^{-1}, 10^{-2}$ of $\exp$ on $[-2, 2]$ from above by 2, 5, 32 paraboloids, respectively.}
	\label{fig:approx-exp-above}
	\centering
	\begin{subfigure}{0.32\textwidth}
		\centering
		\begin{tikzpicture}[scale=1.0]
			% Define clipping area
			\clip (-2.1, -1.0) rectangle (2.1, 7.4);
			\ % Draw x-axis
			\draw[->] (-2,0) -- (2,0) node[right] {$x$};
			% Draw y-axis
			\draw[->] (0,-0.6) -- (0,7.4) node[above] {$y$};
			% Draw grid lines
			\foreach \x in {-2.0,-1.5,...,2.0}
			\draw (\x,-0.05) -- (\x,0.05);
			\foreach \y in {0.5,1.0,...,7.0}
			\draw (-0.05,\y) -- (0.05,\y);
			% Label x-axis
			\foreach \x/\label in {-2,-1,1,2}
			\draw (\x,0) node[below] {$\label$};
			% Label y-axis
			\foreach \y in {1,...,7}
			\draw (0,\y) node[left] {$\y$};
			% Draw exp function
			\draw[domain=-2.0:2.0,smooth,variable=\x,black, line width=2pt] plot ({\x},{exp(\x)});
			% Define parameters for parabolic functions
			\def\quadraticcoeffs{{0.45183,0.76135}}
			\def\linearcoeffs{{1.97593,1.56324}}
			\def\constantcoeffs{{2.279876,1.217793}}
			% Draw parabolic functions
			\foreach \i in {0,...,1}{
				\draw[domain=-2.0:2.0,smooth,variable=\x,gray] plot ({\x},{\quadraticcoeffs[\i]*\x*\x + \linearcoeffs[\i]*\x + \constantcoeffs[\i]});
			}
		\end{tikzpicture}
	\end{subfigure}
	\hfill
	\begin{subfigure}{0.32\textwidth}
		\centering
		\begin{tikzpicture}[scale=1.0]
			% Define clipping area
			\clip (-2.1, -1.0) rectangle (2.1, 7.4);
			\ % Draw x-axis
			\draw[->] (-2,0) -- (2,0) node[right] {$x$};
			% Draw y-axis
			\draw[->] (0,-0.6) -- (0,7.4) node[above] {$y$};
			% Draw grid lines
			\foreach \x in {-2.0,-1.5,...,2.0}
			\draw (\x,-0.05) -- (\x,0.05);
			\foreach \y in {0.5,1.0,...,7.0}
			\draw (-0.05,\y) -- (0.05,\y);
			% Label x-axis
			\foreach \x/\label in {-2,-1,1,2}
			\draw (\x,0) node[below] {$\label$};
			% Label y-axis
			\foreach \y in {1,...,7}
			\draw (0,\y) node[left] {$\y$};
			% Draw exp function
			\draw[domain=-2.0:2.0,smooth,variable=\x,black, line width=2pt] plot ({\x},{exp(\x)});
			% Define parameters for parabolic functions
			\def\quadraticcoeffs{{0.49813,1.0284,0.7218,1.46874,2.16733}}
			\def\linearcoeffs{{1.79718,1.1337,1.62985,0.15076,-1.84929}}
			\def\constantcoeffs{{1.802177,1.008156,1.291587,1.212577,2.418805}}
			% Draw parabolic functions
			\foreach \i in {0,...,4}{
				\draw[domain=-2.0:2.0,smooth,variable=\x,gray] plot ({\x},{\quadraticcoeffs[\i]*\x*\x + \linearcoeffs[\i]*\x + \constantcoeffs[\i]});
			}
		\end{tikzpicture}
	\end{subfigure}
	\hfill
	\begin{subfigure}{0.32\textwidth}
		\centering
		\begin{tikzpicture}[scale=1.0]
			% Define clipping area
			\clip (-2.1, -1.0) rectangle (2.1, 7.4);
			\ % Draw x-axis
			\draw[->] (-2,0) -- (2,0) node[right] {$x$};
			% Draw y-axis
			\draw[->] (0,-0.6) -- (0,7.4) node[above] {$y$};
			% Draw grid lines
			\foreach \x in {-2.0,-1.5,...,2.0}
			\draw (\x,-0.05) -- (\x,0.05);
			\foreach \y in {0.5,1.0,...,7.0}
			\draw (-0.05,\y) -- (0.05,\y);
			% Label x-axis
			\foreach \x/\label in {-2,-1,1,2}
			\draw (\x,0) node[below] {$\label$};
			% Label y-axis
			\foreach \y in {1,...,7}
			\draw (0,\y) node[left] {$\y$};
			% Draw exp function
			\draw[domain=-2.0:2.0,smooth,variable=\x,black, line width=2pt] plot ({\x},{exp(\x)});
			% Define parameters for parabolic functions
			\def\quadraticcoeffs{{0.44374,1.22237,1.82983,0.59311,0.49338,0.95495,0.81247,2.93522,0.64349,0.72744,0.59735,1.07445,0.68534,0.44448,0.83712,1.33668,0.55346,0.64575,0.57854,2.09601,2.11377,0.45394,0.84863,0.82581,1.15187,0.73295,2.87955,2.80798,2.42549,2.43968,0.51898,1.51377}}
			\def\linearcoeffs{{1.81181,0.73728,-0.82521,1.74043,1.79794,1.2667,1.49324,-4.47859,1.69659,1.60554,1.73708,1.04518,1.65385,1.81171,1.4573,0.47697,1.76852,1.69442,1.75143,-1.62568,-1.68114,1.81018,1.44003,1.47397,0.88836,1.59885,-4.27671,-1.99836,-2.69211,-2.7397,1.78791,0.04019}}
			\def\constantcoeffs{{1.990497,1.025062,1.720163,1.535773,1.819657,1.035857,1.152703,4.605518,1.421922,1.26823,1.52551,1.000897,1.340007,1.987736,1.125989,1.088402,1.638194,1.417217,1.572037,2.256391,2.296273,1.952937,1.114494,1.137887,1.004894,1.259587,4.424277,2.037609,3.07132,3.109754,1.737329,1.253597}}
			% Draw parabolic functions
			\foreach \i in {0,...,31}{
				\draw[domain=-2.0:2.0,smooth,variable=\x,gray] plot ({\x},{\quadraticcoeffs[\i]*\x*\x + \linearcoeffs[\i]*\x + \constantcoeffs[\i]});
			}
		\end{tikzpicture}
	\end{subfigure}
\end{figure}

For the approximations of $x^3$ on $[-2, 2]$, we restrict the visualizations to $\eps = 10^0$ and $\eps = 10^{-1}$, since the large number of paraboloids required for a smaller accuracy does not reveal new insights. 
Further, the function $x^3$ is point-symmetric around the origin and thus limit the plots to the approximation direction from below, see~\Cref{fig:approx-x3-below}

\begin{figure}[H]
	\caption{Approximation  with $\eps = 10^0, 10^{-1}$ of $x^3$ on $[-2, 2]$ from below by 3 and 12 paraboloids, respectively.}
	\label{fig:approx-x3-below}
	\centering
	\begin{subfigure}{0.32\textwidth}
		\begin{tikzpicture}[scale=1.0]
			% Define clipping area
			\clip (-2.1,-8.2) rectangle (2.1, 8.2);
			\ % Draw x-axis
			\draw[->] (-2,0) -- (2,0) node[right] {$x$};
			% Draw y-axis
			\draw[->] (0,-8.2) -- (0,8.2) node[above] {$y$};
			% Draw grid lines
			\foreach \x in {-2,-1.5,...,2.0}
			\draw (\x,-0.05) -- (\x,0.05);
			\foreach \y in {-8.0,-7.5,...,8.0}
			\draw (-0.05,\y) -- (0.05,\y);
			% Label x-axis
			\foreach \x/\label in {-2,-1,1,2}
			\draw (\x,0) node[below] {$\label$};
			% Label y-axis
			\foreach \y in {-8,...,8}
			\draw (0,\y) node[left] {$\y$};
			% Draw x^3 function
			\draw[domain=-2.0:2.0,smooth,variable=\x,black, line width=2pt] plot ({\x},{\x*\x*\x});
			% Define parameters for parabolic functions
			\def\quadraticcoeffs{{-0.54782,-2.93587,1.29536}}
			\def\linearcoeffs{{2.36898,-1.97853,3.8375}}
			\def\constantcoeffs{{-1.070761,-0.387444,-5.506441}}
			% Draw parabolic functions
			\foreach \i in {0,...,2}{
				\draw[domain=-2.0:2.0,smooth,variable=\x,gray] plot ({\x},{\quadraticcoeffs[\i]*\x*\x + \linearcoeffs[\i]*\x + \constantcoeffs[\i]});
			}
		\end{tikzpicture}
	\end{subfigure}
	\begin{subfigure}{0.32\textwidth}
		\begin{tikzpicture}[scale=1.0]
			% Define clipping area
			\clip (-2.1,-8.2) rectangle (2.1, 8.2);
			\ % Draw x-axis
			\draw[->] (-2,0) -- (2,0) node[right] {$x$};
			% Draw y-axis
			\draw[->] (0,-8.2) -- (0,8.2) node[above] {$y$};
			% Draw grid lines
			\foreach \x in {-2,-1.5,...,2.0}
			\draw (\x,-0.05) -- (\x,0.05);
			\foreach \y in {-8.0,-7.5,...,8.0}
			\draw (-0.05,\y) -- (0.05,\y);
			% Label x-axis
			\foreach \x/\label in {-2,-1,1,2}
			\draw (\x,0) node[below] {$\label$};
			% Label y-axis
			\foreach \y in {-8,...,8}
			\draw (0,\y) node[left] {$\y$};
			% Draw x^3 function
			\draw[domain=-2.0:2.0,smooth,variable=\x,black, line width=2pt] plot ({\x},{\x*\x*\x});
			% Define parameters for parabolic functions
			\def\quadraticcoeffs{{0.6811,-3.49007,-4.48014,1.68347,1.23662,-2.24318,-1.57304,-0.97897,-0.33964,1.80802,0.1024,-2.90562}}
			\def\linearcoeffs{{3.56513,-3.53521,-6.49806,3.97495,3.85432,-0.50114,0.80835,1.78144,2.63153,3.99079,3.09978,-2.01628}}
			\def\constantcoeffs{{-3.594155,-1.11015,-3.075561,-6.783981,-5.237867,-0.029569,-0.091149,-0.521255,-1.378405,-7.250517,-2.210045,-0.410081}}
			% Draw parabolic functions
			\foreach \i in {0,...,11}{
				\draw[domain=-2.0:2.0,smooth,variable=\x,gray] plot ({\x},{\quadraticcoeffs[\i]*\x*\x + \linearcoeffs[\i]*\x + \constantcoeffs[\i]});
			}
		\end{tikzpicture}
	\end{subfigure}
\end{figure}

\newpage

\subsection{Overview of MINLP instances solved}
\label{subsec:overview-minlplib}

In~\Cref{tab:overview} we display an overview of the results on the MINLPLib.
For the evaluations, we excluded instance ex8\_4\_6 for \software{Gurobi} and eg\_disc\_s for \software{SCIP} since they caused numerical and/or memory errors only with the respective solver.
This is indicated by an asterisk.

 \begin{longtable}{lllccc}\toprule
 	instance & solver & type & run time in s & primal value & dual value\\ \hline\hline 
 	\multirow{6}{*}{batchdes} & \multirow{3}{*}{\software{Gurobi}} & both & 0.0 & 167427.7 & 167427.7\\
 	&  & orig & 0.0 & 167427.7 & 167427.7\\
 	&  & para & 0.0 & 140933.2 & 140937.2\\
 	& \multirow{3}{*}{\software{SCIP}} & both & 0.1 & 167426.7 & 167427.6\\
 	&  & orig & 0.1 & 167427.7 & 167427.7\\
 	&  & para & 0.1 & 140934.4 & 140937.1\\\hline
 	\multirow{6}{*}{contvar} & \multirow{3}{*}{\software{Gurobi}} & both & limit & 437878.5 & 1175110.6\\
 	&  & orig & limit & 498872.2 & inf\\
 	&  & para & limit & 433241.9 & 2091301.7\\
 	& \multirow{3}{*}{\software{SCIP}} & both & limit & 454624.4 & 809755.5\\
 	&  & orig & limit & 538588.7 & 809149.8\\
 	&  & para & limit & 459660.7 & 809567.8\\\hline
 	\multirow{6}{*}{eg\_all\_s} & \multirow{3}{*}{\software{Gurobi}} & both & 638.5 & 11.9 & 11.9\\
 	&  & orig & limit & 1.5 & 7.7\\
 	&  & para & limit & -2.6 & 13.9\\
 	& \multirow{3}{*}{\software{SCIP}} & both & limit & -3.3 & inf\\
 	&  & orig & 1775.0 & 7.7 & 7.7\\
 	&  & para & limit & -3.7 & 11.6\\\hline
 	\multirow{6}{*}{eg\_disc2\_s} & \multirow{3}{*}{\software{Gurobi}} & both & limit & -6.5 & inf\\
 	&  & orig & limit & -5.2 & 7.0\\
 	&  & para & limit & -8.4 & inf\\
 	& \multirow{3}{*}{\software{SCIP}} & both & limit & -7.8 & inf\\
 	&  & orig & limit & -1.1 & 6.3\\
 	&  & para & limit & -10.6 & inf\\\hline
 	\multirow{6}{*}{eg\_disc\_s} & \multirow{3}{*}{\software{Gurobi}} & both & limit & -6.4 & inf\\
 	&  & orig & limit & 2.0 & 7.5\\
 	&  & para & limit & -7.8 & inf\\
 	& \multirow{3}{*}{\software{SCIP}*} & both & limit & -7.7 & inf\\
 	&  & orig & limit & 3.6 & 5.8\\
 	&  & para & limit & -10.9 & 10.0\\\hline
 	\multirow{6}{*}{eg\_int\_s} & \multirow{3}{*}{\software{Gurobi}} & both & 989.0 & inf & inf\\
 	&  & orig & limit & -1.9 & inf\\
 	&  & para & limit & -3.1 & inf\\
 	& \multirow{3}{*}{\software{SCIP}} & both & limit & -3.2 & inf\\
 	&  & orig & 9085.1 & 6.5 & 6.5\\
 	&  & para & limit & -3.7 & inf\\\hline
 	\multirow{6}{*}{ex1222} & \multirow{3}{*}{\software{Gurobi}} & both & 0.0 & 1.1 & 1.1\\
 	&  & orig & 0.0 & 1.1 & 1.1\\
 	&  & para & 0.0 & 1.1 & 1.1\\
 	& \multirow{3}{*}{\software{SCIP}} & both & 0.1 & 1.1 & 1.1\\
 	&  & orig & 0.0 & 1.1 & 1.1\\
 	&  & para & 0.0 & 1.1 & 1.1\\\hline
 	\multirow{6}{*}{ex14\_1\_3} & \multirow{3}{*}{\software{Gurobi}} & both & 0.2 & -0.0 & -0.0\\
 	&  & orig & 0.0 & -0.0 & -0.0\\
 	&  & para & 0.7 & -0.0 & -0.0\\
 	& \multirow{3}{*}{\software{SCIP}} & both & 0.1 & -0.0 & -0.0\\
 	&  & orig & 0.0 & -0.0 & -0.0\\
 	&  & para & 0.0 & -0.0 & -0.0\\\hline
 	\multirow{6}{*}{ex14\_1\_4} & \multirow{3}{*}{\software{Gurobi}} & both & 0.2 & -0.0 & -0.0\\
 	&  & orig & 0.1 & -0.0 & -0.0\\
 	&  & para & 0.3 & -0.0 & -0.0\\
 	& \multirow{3}{*}{\software{SCIP}} & both & 1.0 & -0.0 & -0.0\\
 	&  & orig & 0.1 & -0.0 & -0.0\\
 	&  & para & 1.0 & -0.0 & -0.0\\\hline
 	\multirow{6}{*}{ex3pb} & \multirow{3}{*}{\software{Gurobi}} & both & 0.0 & 68.0 & 68.0\\
 	&  & orig & 0.0 & 68.0 & 68.0\\
 	&  & para & 0.0 & 68.0 & 68.0\\
 	& \multirow{3}{*}{\software{SCIP}} & both & 0.3 & 68.0 & 68.0\\
 	&  & orig & 0.2 & 68.0 & 68.0\\
 	&  & para & 0.2 & 68.0 & 68.0\\\hline
 	\multirow{6}{*}{ex8\_1\_1} & \multirow{3}{*}{\software{Gurobi}} & both & 0.1 & -2.0 & -2.0\\
 	&  & orig & 0.0 & -2.0 & -2.0\\
 	&  & para & 0.1 & -2.0 & -2.0\\
 	& \multirow{3}{*}{\software{SCIP}} & both & 0.1 & -2.0 & -2.0\\
 	&  & orig & 0.0 & -2.0 & -2.0\\
 	&  & para & 0.1 & -2.0 & -2.0\\\hline
 	\multirow{6}{*}{ex8\_1\_2} & \multirow{3}{*}{\software{Gurobi}} & both & 6.2 & -1.1 & -1.1\\
 	&  & orig & 2.6 & -1.4 & -1.4\\
 	&  & para & 6.0 & -1.1 & -1.1\\
 	& \multirow{3}{*}{\software{SCIP}} & both & 0.5 & -1.1 & -1.1\\
 	&  & orig & 1.0 & -1.1 & -1.1\\
 	&  & para & 0.3 & -1.1 & -1.1\\\hline
 	\multirow{6}{*}{ex8\_2\_1b} & \multirow{3}{*}{\software{Gurobi}} & both & 0.1 & -979.2 & -979.2\\
 	&  & orig & 0.1 & -979.2 & -979.2\\
 	&  & para & 0.2 & -979.4 & -979.4\\
 	& \multirow{3}{*}{\software{SCIP}} & both & 0.6 & -979.2 & -979.2\\
 	&  & orig & 0.4 & -979.3 & -979.2\\
 	&  & para & 0.5 & -979.4 & -979.4\\\hline
 	\multirow{6}{*}{ex8\_2\_4b} & \multirow{3}{*}{\software{Gurobi}} & both & 0.1 & -1197.2 & -1197.1\\
 	&  & orig & 0.1 & -1197.2 & -1197.1\\
 	&  & para & 0.1 & -1197.4 & -1197.4\\
 	& \multirow{3}{*}{\software{SCIP}} & both & 0.8 & -1197.1 & -1197.1\\
 	&  & orig & 0.4 & -1197.1 & -1197.1\\
 	&  & para & 0.7 & -1197.4 & -1197.4\\\hline
 	\multirow{6}{*}{ex8\_4\_6} & \multirow{3}{*}{\software{Gurobi}*} & both & 1.8 & 0.1 & 0.1\\
 	&  & orig & 92.5 & 0.0 & 0.0\\
 	&  & para & 68.0 & 0.6 & 0.6\\
 	& \multirow{3}{*}{\software{SCIP}} & both & 426.3 & 0.0 & 0.0\\
 	&  & orig & limit & -3e+09 & 0.0\\
 	&  & para & limit & 0.0 & 0.0\\\hline
 	\multirow{6}{*}{ex8\_4\_7} & \multirow{3}{*}{\software{Gurobi}} & both & 1507.3 & 28.9 & 28.9\\
 	&  & orig & 11.0 & 28.8 & 28.8\\
 	&  & para & 11099.4 & 26.9 & 26.9\\
 	& \multirow{3}{*}{\software{SCIP}} & both & limit & 28.4 & 29.0\\
 	&  & orig & limit & 25.5 & 29.0\\
 	&  & para & limit & 24.3 & 27.0\\\hline
 	\multirow{6}{*}{feedtray} & \multirow{3}{*}{\software{Gurobi}} & both & limit & -43.3 & -13.6\\
 	&  & orig & limit & -45.3 & inf\\
 	&  & para & limit & -64.1 & -14.9\\
 	& \multirow{3}{*}{\software{SCIP}} & both & limit & -68.6 & -13.4\\
 	&  & orig & limit & -68.7 & -13.4\\
 	&  & para & limit & -68.7 & -29.1\\\hline
 	\multirow{6}{*}{ghg\_1veh} & \multirow{3}{*}{\software{Gurobi}} & both & 8.6 & 7.8 & 7.8\\
 	&  & orig & 16.0 & 7.8 & 7.8\\
 	&  & para & 10.0 & 7.8 & 7.8\\
 	& \multirow{3}{*}{\software{SCIP}} & both & 14.6 & 7.8 & 7.8\\
 	&  & orig & 19.4 & 7.8 & 7.8\\
 	&  & para & 25.7 & 7.8 & 7.8\\\hline
 	\multirow{6}{*}{ghg\_2veh} & \multirow{3}{*}{\software{Gurobi}} & both & limit & 7.5 & 7.8\\
 	&  & orig & 5390.8 & 6.9 & 7.8\\
 	&  & para & limit & 7.4 & 7.8\\
 	& \multirow{3}{*}{\software{SCIP}} & both & limit & 7.6 & 7.8\\
 	&  & orig & limit & 7.4 & 7.8\\
 	&  & para & limit & 7.6 & 7.8\\\hline
 	\multirow{6}{*}{ghg\_3veh} & \multirow{3}{*}{\software{Gurobi}} & both & limit & 6.5 & 7.8\\
 	&  & orig & limit & 5.8 & 7.8\\
 	&  & para & limit & 6.5 & 7.8\\
 	& \multirow{3}{*}{\software{SCIP}} & both & limit & 6.5 & 7.8\\
 	&  & orig & limit & 5.1 & 7.8\\
 	&  & para & limit & 5.9 & 7.7\\\hline
 	\multirow{6}{*}{inscribedsquare01} & \multirow{3}{*}{\software{Gurobi}} & both & 0.9 & 1.0 & 1.0\\
 	&  & orig & 0.4 & 1.0 & 1.0\\
 	&  & para & 1.0 & 1.0 & 1.0\\
 	& \multirow{3}{*}{\software{SCIP}} & both & 4.0 & 1.0 & 1.0\\
 	&  & orig & 0.7 & 1.0 & 1.0\\
 	&  & para & 4.6 & 1.0 & 1.0\\\hline
 	\multirow{6}{*}{inscribedsquare02} & \multirow{3}{*}{\software{Gurobi}} & both & 1.5 & 1.0 & 1.0\\
 	&  & orig & 0.8 & 1.0 & 1.0\\
 	&  & para & 3.7 & 1.0 & 1.0\\
 	& \multirow{3}{*}{\software{SCIP}} & both & 6.2 & 1.0 & 1.0\\
 	&  & orig & 1.7 & 1.0 & 1.0\\
 	&  & para & 15.1 & 1.0 & 1.0\\\hline
 	\multirow{6}{*}{inscribedsquare03} & \multirow{3}{*}{\software{Gurobi}} & both & 2.2 & 23.8 & 23.8\\
 	&  & orig & 1.2 & 23.8 & 23.8\\
 	&  & para & 2.9 & 24.0 & 24.0\\
 	& \multirow{3}{*}{\software{SCIP}} & both & 11.6 & 23.8 & 23.8\\
 	&  & orig & 4.5 & 23.8 & 23.8\\
 	&  & para & 15.8 & 24.0 & 24.0\\\hline
 	\multirow{6}{*}{kriging\_peaks-red020} & \multirow{3}{*}{\software{Gurobi}} & both & 14.4 & 0.4 & 0.4\\
 	&  & orig & 4.0 & 0.4 & 0.4\\
 	&  & para & 14.2 & 0.4 & 0.4\\
 	& \multirow{3}{*}{\software{SCIP}} & both & 31.3 & 0.4 & 0.4\\
 	&  & orig & 773.7 & 0.4 & 0.4\\
 	&  & para & 19.2 & 0.4 & 0.4\\\hline
 	\multirow{6}{*}{lnts100} & \multirow{3}{*}{\software{Gurobi}} & both & limit & 0.6 & 0.6\\
 	&  & orig & limit & 0.5 & 0.6\\
 	&  & para & limit & 0.6 & 0.8\\
 	& \multirow{3}{*}{\software{SCIP}} & both & limit & 0.6 & 0.6\\
 	&  & orig & limit & 0.5 & 0.6\\
 	&  & para & 3889.5 & 0.6 & 0.6\\\hline
 	\multirow{6}{*}{lnts200} & \multirow{3}{*}{\software{Gurobi}} & both & limit & 0.5 & inf\\
 	&  & orig & limit & 0.5 & 0.6\\
 	&  & para & limit & 0.6 & 0.6\\
 	& \multirow{3}{*}{\software{SCIP}} & both & limit & 0.5 & 0.6\\
 	&  & orig & limit & 0.5 & 0.6\\
 	&  & para & limit & 0.5 & 0.6\\\hline
 	\multirow{6}{*}{lnts400} & \multirow{3}{*}{\software{Gurobi}} & both & limit & 0.5 & inf\\
 	&  & orig & limit & 0.5 & 0.6\\
 	&  & para & limit & 0.6 & 0.7\\
 	& \multirow{3}{*}{\software{SCIP}} & both & limit & 0.5 & inf\\
 	&  & orig & limit & 0.5 & 0.6\\
 	&  & para & limit & 0.5 & inf\\\hline
 	\multirow{6}{*}{lnts50} & \multirow{3}{*}{\software{Gurobi}} & both & 13371.4 & 0.6 & 0.6\\
 	&  & orig & 5811.5 & 0.6 & 0.6\\
 	&  & para & limit & 0.6 & 0.6\\
 	& \multirow{3}{*}{\software{SCIP}} & both & limit & 0.6 & 0.6\\
 	&  & orig & limit & 0.5 & 0.6\\
 	&  & para & 51.1 & 0.6 & 0.6\\\hline
 	\multirow{6}{*}{mathopt6} & \multirow{3}{*}{\software{Gurobi}} & both & 0.2 & -3.3 & -3.3\\
 	&  & orig & 0.1 & -3.3 & -3.3\\
 	&  & para & 0.1 & -3.3 & -3.3\\
 	& \multirow{3}{*}{\software{SCIP}} & both & 1.5 & -3.3 & -3.3\\
 	&  & orig & 0.5 & -3.3 & -3.3\\
 	&  & para & 0.5 & -3.3 & -3.3\\\hline
 	\multirow{5}{*}{polygon100} & \multirow{3}{*}{\software{Gurobi}} & both & limit & -1.6 & -0.7\\
 	&  & orig & limit & -1.6 & -0.8\\
 	&  & para & limit & -2.2 & -0.8\\
 	& \multirow{2}{*}{\software{SCIP}}  & both & limit & -1.6 & 0.0 \\
 	& & orig & limit & -26.6 & -0.8\\
 	&  & para & limit & -43.9 & -0.1\\\hline
 	\multirow{6}{*}{polygon25} & \multirow{3}{*}{\software{Gurobi}} & both & limit & -1.6 & -0.7\\
 	&  & orig & limit & -1.6 & -0.8\\
 	&  & para & limit & -1.9 & -0.7\\
 	& \multirow{3}{*}{\software{SCIP}} & both & limit & -1.4 & -0.7\\
 	&  & orig & limit & -4.4 & -0.8\\
 	&  & para & limit & -1.9 & -0.7\\\hline
 	\multirow{6}{*}{polygon50} & \multirow{3}{*}{\software{Gurobi}} & both & limit & -1.6 & -0.7\\
 	&  & orig & limit & -1.6 & -0.8\\
 	&  & para & limit & -2.1 & -0.8\\
 	& \multirow{3}{*}{\software{SCIP}} & both & limit & -1.6 & -0.7\\
 	&  & orig & limit & -10.4 & -0.8\\
 	&  & para & limit & -2.8 & -0.8\\\hline
 	\multirow{6}{*}{polygon75} & \multirow{3}{*}{\software{Gurobi}} & both & limit & -1.6 & -0.7\\
 	&  & orig & limit & -1.6 & -0.8\\
 	&  & para & limit & -2.2 & -0.8\\
 	& \multirow{3}{*}{\software{SCIP}} & both & limit & -1.6 & 0.0\\
 	&  & orig & limit & -17.4 & -0.8\\
 	&  & para & limit & -3.1 & -0.9\\\hline
 	\multirow{6}{*}{pooling\_epa1} & \multirow{3}{*}{\software{Gurobi}} & both & 8.6 & -280.8 & -280.8\\
 	&  & orig & 3.6 & -280.8 & -280.8\\
 	&  & para & 6.1 & -291.8 & -291.7\\
 	& \multirow{3}{*}{\software{SCIP}} & both & 142.1 & -280.8 & -280.8\\
 	&  & orig & 53.9 & -280.8 & -280.8\\
 	&  & para & 9.4 & -291.8 & -291.7\\\hline
 	\multirow{6}{*}{pooling\_epa2} & \multirow{3}{*}{\software{Gurobi}} & both & 126.6 & -4567.7 & -4567.4\\
 	&  & orig & 224.9 & -4567.7 & -4567.3\\
 	&  & para & 102.1 & -4567.6 & -4567.2\\
 	& \multirow{3}{*}{\software{SCIP}} & both & 2797.2 & -4567.4 & -4567.4\\
 	&  & orig & limit & -4637.3 & -4567.4\\
 	&  & para & 2047.9 & -4567.7 & -4567.4\\\hline
 	\multirow{6}{*}{pooling\_epa3} & \multirow{3}{*}{\software{Gurobi}} & both & limit & -14998.5 & -14961.6\\
 	&  & orig & limit & -14998.6 & -14963.5\\
 	&  & para & limit & -14998.6 & -14963.0\\
 	& \multirow{3}{*}{\software{SCIP}} & both & limit & -14998.6 & inf\\
 	&  & orig & limit & -14998.6 & -14936.8\\
 	&  & para & limit & -14998.6 & -14535.8\\\hline
 	\multirow{6}{*}{synthes2} & \multirow{3}{*}{\software{Gurobi}} & both & 0.1 & 73.0 & 73.0\\
 	&  & orig & 0.0 & 73.0 & 73.0\\
 	&  & para & 0.1 & 72.9 & 72.9\\
 	& \multirow{3}{*}{\software{SCIP}} & both & 0.2 & 73.0 & 73.0\\
 	&  & orig & 0.1 & 73.0 & 73.0\\
 	&  & para & 0.6 & 72.9 & 72.9\\\hline
 	\multirow{6}{*}{synthes3} & \multirow{3}{*}{\software{Gurobi}} & both & 0.1 & 68.0 & 68.0\\
 	&  & orig & 0.0 & 68.0 & 68.0\\
 	&  & para & 0.0 & 68.0 & 68.0\\
 	& \multirow{3}{*}{\software{SCIP}} & both & 0.3 & 68.0 & 68.0\\
 	&  & orig & 0.2 & 68.0 & 68.0\\
 	&  & para & 0.3 & 68.0 & 68.0\\\hline
 	\multirow{6}{*}{t1000} & \multirow{3}{*}{\software{Gurobi}} & both & 1.5 & 0.0 & 0.0\\
 	&  & orig & 1.3 & 0.0 & 0.0\\
 	&  & para & 0.5 & 0.0 & 0.0\\
 	& \multirow{3}{*}{\software{SCIP}} & both & 0.2 & 0.0 & 0.0\\
 	&  & orig & 915.5 & -0.0 & -0.0\\
 	&  & para & 0.6 & 0.0 & 0.0\\
 	\bottomrule
 	\caption{Overview of all MINLPLib instances solved.}
 	\label{tab:overview}
 \end{longtable}

\end{document}